\documentclass[a4paper]{amsart}
\usepackage{graphicx} 

\newcommand{\C}{{\mathbb{C}}}
\newcommand{\N}{{\mathbb {N}}}
\newcommand{\Z}{\mathbb Z}
\newcommand{\R}{\mathbb R}

\newtheorem{thm}{Theorem}
\newtheorem{defn}{Definition}
\newtheorem{rem}{Remark}
\newtheorem{lem}{Lemma}
\newtheorem{Not}{Notation}
\newtheorem{prop}{Proposition}
\newtheorem{cor}{Corollary}
\newtheorem{Exam}{Example}

\title[On the b-function of annihilating ideals]{On the b-function with respect to weights of annihilating ideals in the Weyl algebra}
\author{Helena Cobo }

\begin{document}

\begin{abstract}
Given a polynomial $f\in\C[x_1,\ldots,x_n]$ and an integer $\ell\in\Z$, we study some properties of the b-function with respect to weights of the annihilating ideal Ann$(f^\ell)$. In some particular cases the expression of the b-function is given explicitly.
\end{abstract}

\maketitle

\section*{Introduction}
\label{Intro}

Many algorithms in algebraic analysis are based on the computation of the so-called b-function with respect to weights of a holonomic ideal (see \cite{SST}). One instance is when computing solutions to holonomic systems (see \cite{OTT}). Another example is the computation of the Bernstein-Sato polynomial. In particular, one way to compute the Bernstein-Sato polynomial passes through the computation of this weighted b-function for a specific weight (see \cite{DA} for an introduction to the subject). The Bernstein-Sato polynomial is an interesting and subtle invariant, very well-known, among other things for its relation with the Monodromy conjecture. Compared with the Bernstein-Sato polynomial, the b-function with respect to weights is much less studied. The case of hypergeometric ideals has attracted some attention due to its rich relation with solving partial differential equations. In \cite{SST} the authors describe the b-function of regular hypergeometric ideals for generic parameters.
 This invariant is studied for irregular hypergeometric systems in \cite{RSW} for specific weights, while in some particular cases it is studied for any weight: in \cite{CC2} for a particular family of hypergeometric systems in the third Weyl algebra and  in \cite{Cobo} for the case of hypergeometric ideals of codimension one. In \cite{C2} a method is proposed to compute the b-function. It does not work in full generality but covers a wide range of examples, and it is used in the present work. To the best of our knowledge this invariant has not been studied apart from the case of hypergeometric systems.

 Apart from the fact that the b-function with respect to weights is used to compute the Bernstein-Sato polynomial, is the b-function with respect to weights in its own subtle and interesting too? Is there any geometry behind this invariant? Answering this questions in one way or another, was the main motivation for this work.

\vspace{2mm}

We focuss on the holonomic ideal ${\rm Ann}(f^\ell)$, with $f\in\C[x_1,\ldots,x_n]$ and $\ell\in\Z$. These annihilating ideals play a role in the computation of the Bernstein-Sato polynomial of $f$ or in the description of the localization $\C[x_1,\ldots,x_n]_f$. These ideals are very difficult to compute in general and hence we only give partial results and compute the b-function in some particular cases.

Inspiring examples were computed using the computer algebra system SINGULAR \cite{singular}.

\vspace{5mm}

{\bf Acknowledgments:} I'm grateful to Francisco Castro Jiménez for introducing me to the world of b-functions with respect to weights.

\section{The weighted b-function of a holonomic ideal}
\label{bf}
We denote by $D_n$ the ring $\C[x_1,\ldots,x_n,\partial_1,\ldots, \partial_n]$ subject to the relations
\[\partial_ix_j=x_j\partial_i+\delta_{ij},\ x_ix_j=x_jx_i,\ \partial_i\partial_j=\partial_j\partial_i\]
for $1\leq i,j\leq n$. It is called the $n$-th Weyl algebra over the field $\C$.

The Weyl algebra is non-commutative and by an ideal in $D_n$ we will always mean a left ideal.

Every operator $P$ in the Weyl algebra has a unique expansion of the form
\[P=\sum_{(\alpha,\beta)\in \N^{2n}}c_{\alpha,\beta}{\mathbf x} 
^\alpha\partial^\beta\]
where only a finite number of coefficients $c_{\alpha,\beta}$ are not zero. It is called the normally ordered expression. We are using the multi-index notation, where ${\mathbf x}^\alpha$ means $x_1^{\alpha_1}\cdots x_n^{\alpha_n}$ and $\partial^\beta$ is $\partial_1^{\beta_1}\cdots\partial_n^{\beta_n}$. For convenience we may write the normally ordered expression as
\[P=\sum_{\beta\in\N^n} p_\beta({\bf x})\partial^\beta\]
where $p_\beta\in\C[x_1,\ldots,x_n]$.

\begin{defn}
A vector $(u,v)=(u_1,\ldots,u_n,v_1,\ldots,v_n)\in\R^{2n}$ is called  a {\em weight vector} (for the Weyl algebra) if
\[u_i+v_i\geq 0,\ \ \mbox{ for }i=1,2\ldots,n.\]
\end{defn}

\begin{defn}
Let $u,v\in\R^n$. For a non-zero operator $P=\sum c_{\alpha,\beta}{\mathbf x}^\alpha\partial^\beta\in D_n$, {\em the initial form} of $P$ with respect to $(u,v)$ is
\[{\rm in}_{(u,v)}(P):=\sum_{\alpha u+\beta v=m}c_{\alpha,\beta}{\mathbf x}^\alpha\partial^\beta\]
where $m={\rm max }\ \{\alpha\cdot u+\beta\cdot v\ |\ c_{\alpha,\beta} \neq 0\}$.

 This definition extends straightforwardly to ideals $I\subseteq D_n$,
 \[{\rm in}_{(u,v)}(I):=D_n\cdot \{{\rm in}_{(u,v)}(P)\ |\ P\in I\}.\]
\end{defn}

\begin{defn}
Given any ideal $I\subseteq D_n$, {\em the characteristic variety} ${\rm Char}(I)$ is the affine variety in $\C^{2n}$ defined by the characteristic ideal ${\rm in}_{({\bf 0},{\bf e})}(I)$, where ${\bf e}=(1,\ldots,1)$.
\end{defn}

\begin{thm}
(\cite{SKK}) Let $I$ be a proper ideal in $D_n$. Every irreducible component of ${\rm Char}(I)$ has dimension at least $n$.
\end{thm}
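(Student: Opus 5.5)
The plan is to derive the dimension bound from the \emph{involutivity} of the characteristic variety, in the spirit of Sato--Kawai--Kashiwara and, in its algebraic form, Gabber. Consider the order filtration $F_kD_n=\{P : \mbox{order in }\partial\leq k\}$. Its associated graded ring is $\mathrm{gr}\,D_n\cong\C[x_1,\ldots,x_n,\xi_1,\ldots,\xi_n]$, and $\mathrm{gr}\,I={\rm in}_{({\bf 0},{\bf e})}(I)$ under this identification. The commutator induces the Poisson bracket: if $P\in F_a$ and $Q\in F_b$, then $[P,Q]\in F_{a+b-1}$ and
\[\sigma_{a+b-1}([P,Q])=\{\sigma_a(P),\sigma_b(Q)\}=\sum_i\Big(\frac{\partial\sigma_a(P)}{\partial\xi_i}\frac{\partial\sigma_b(Q)}{\partial x_i}-\frac{\partial\sigma_a(P)}{\partial x_i}\frac{\partial\sigma_b(Q)}{\partial\xi_i}\Big).\]
This is the standard symplectic bracket on $\C^{2n}$.

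First I check that ${\rm Char}(I)$ is nonempty. Suppose $1\in\mathrm{gr}\,I$. Since $\mathrm{gr}\,I$ is graded, $1$ must come from an element $P\in I$ of order $0$ with principal symbol $1$. Such a $P$ is the constant polynomial $1$, so $I=D_n$, which contradicts properness. Hence $\mathrm{gr}\,I$ is a proper ideal and ${\rm Char}(I)$ is nonempty.

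Next I need that $J=\sqrt{\mathrm{gr}\,I}$ is closed under $\{\ ,\ \}$. I also need that every minimal prime $\mathfrak p$ of $J$ is closed under the bracket. The second point follows from the first by a standard argument. Write $J=\mathfrak p\cap\mathfrak q$, where $\mathfrak q$ is the intersection of the other minimal primes, and choose $h\in\mathfrak q\setminus\mathfrak p$. For $f,g\in\mathfrak p$ we have $hf,hg\in J$. Expanding $\{hf,hg\}\in J\subseteq\mathfrak p$ by the Leibniz rule, every term except $h^2\{f,g\}$ lies in $\mathfrak p$. Since $\mathfrak p$ is prime and $h\notin\mathfrak p$, it follows that $\{f,g\}\in\mathfrak p$.

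The final step is linear algebra at a smooth point $p$ of an irreducible component $V=V(\mathfrak p)$. Let $N\subseteq T_p^*\C^{2n}$ be the conormal space, spanned by $df(p)$ for $f\in\mathfrak p$, so that $\dim V=2n-\dim N$. Because $\mathfrak p$ is closed under the bracket, for $f,g\in\mathfrak p$ we get $\omega(H_f,H_g)(p)=\{f,g\}(p)=0$. So, under the symplectic identification $T^*\cong T$, the space $N$ is isotropic and therefore has dimension at most $n$. Hence $\dim V\geq n$.

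The main obstacle is the involutivity of $J=\sqrt{\mathrm{gr}\,I}$ itself. Closure of $\mathrm{gr}\,I$ under the bracket is immediate from the commutator formula, but closure of the radical is not. The difficulty is that if $\sigma(P)^m\in\mathrm{gr}\,I$, nothing directly produces an element of $I$ whose symbol is $\sigma(P)$. This is exactly the content of Gabber's integrability theorem for filtered rings whose associated graded ring is commutative Noetherian and contains $\Q$. I would try to follow Gabber's algebraic proof: one builds a suitable auxiliary filtered module to control the order of $[P^m,\cdot\,]$, and works the argument out for $D_n$. If that proves too heavy, the fallback is the original analytic route of \cite{SKK}. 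There one passes to microdifferential operators, where an operator with invertible symbol is invertible, and shows that the support of the microlocalized module is involutive.
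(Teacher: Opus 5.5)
Your argument is correct and follows the route of the source the paper cites: the paper gives no proof of its own but quotes Sato--Kawai--Kashiwara, where this dimension bound is precisely the corollary of involutivity of the characteristic variety that you derive. Your steps (properness $\Rightarrow$ nonempty variety, involutive radical $\Rightarrow$ involutive minimal primes, isotropic conormal space at a smooth point $\Rightarrow$ $\dim\geq n$) are all sound. As in the paper, the real content is imported rather than proved: the fact that $\sqrt{{\rm in}_{({\bf 0},{\bf e})}(I)}$ is closed under the Poisson bracket comes from Gabber's integrability theorem or the microlocal argument of Sato--Kawai--Kashiwara. Your proposal is therefore complete exactly to the extent that this citation is accepted; your one-line sketch of Gabber's proof would not by itself supply it.
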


\begin{defn}
An ideal $I\subseteq D_n$ is said to be {\em holonomic} if ${\rm dim}\big({\rm Char}(I)\big)=n$, i.e., if the dimension of the characteristic variety in $\C^{2n}$ is as small as possible.
\end{defn}

Let $I$ be a left ideal in $D_n$ and $\omega$ a non-zero weight vector in $\R^n$. The initial ideal ${\rm in}_{(-\omega,\omega)}(I)$ is often called the {\em Gröbner deformation} of $I$ with respect to $\omega$. Solutions for Gröbner deformations of $I$ can be extended to solutions for $I$ when $I$ is regular and holonomic (see \cite{SST}). Here, the notion of regular holonomic ideal is a generalization of ordinary differential equations with regular singularities (see Definition 2.4.1 in \cite{SST}).

The b-function of $I$ with respect to $\omega$ is an invariant attached to the Gröbner deformations of $I$. Indeed, for $s:=\omega_1x_1\partial_1+\cdots+\omega_n x_n\partial_n$ we consider the intersection
\[{\rm in}_{(-\omega,\omega)}(I)\cap\C[s],\]
which is an ideal in the principal ideal domain $\C[s]$.

\begin{defn}
The monic generator of ${\rm in}_{(-\omega,\omega)}(I)\cap\C[s]$ is called the b-function of the ideal $I$ with respect to $\omega$. It is denoted by $b_{I,\omega}(s)$.
\end{defn}

In some sense, the b-function of $I$ keeps track of all possible Gröbner deformations of $I$.

\begin{thm}(see \cite{Laurent87})
The $b$--function of a holonomic ideal $I\subseteq D_n$ is not the zero polynomial for any $\omega\in\R^n\setminus\{0\}$.
\end{thm}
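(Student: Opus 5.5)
The plan follows the standard argument of \cite{SST} (Theorem 5.1.2 there), with the weight vector $(-\omega,\omega)$ playing the role of a degeneration. Consider the Euler-type grading of $D_n$ in which ${\mathbf x}^\alpha\partial^\beta$ has weight $-\omega\cdot\alpha+\omega\cdot\beta$. The Gröbner deformation $J:={\rm in}_{(-\omega,\omega)}(I)$ is then a homogeneous ideal for this grading. Its degree-zero part $D_n^{(0)}$ contains the commutative subring generated by the $\theta_i:=x_i\partial_i$, and $s=\sum\omega_i\theta_i$.

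\textbf{Step 1 (holonomicity passes to $J$).} Show that $J$ is again holonomic. The idea is to use a weight $(u,v)$ refining both $({\bf 0},{\bf e})$ and $(-\omega,\omega)$: for generic small $\epsilon>0$, ${\rm in}_{({\bf 0},{\bf e})}$ of ${\rm in}_{(-\omega,\omega)}(I)$ coincides with ${\rm in}_{(-\epsilon\omega,{\bf e}+\epsilon\omega)}(I)$. By the usual Gröbner fan argument, the dimension of ${\rm in}_{(u,v)}(I)$ for any $(u,v)$ with $u+v>0$ equals $\dim {\rm Char}(I)$. Hence ${\rm Char}(J)$ has dimension $n$. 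This is the technical heart of the proof and the step I expect to be the main obstacle. Concretely, I would homogenize, take a Gröbner basis of $I$ with respect to a term order refining the combined weight, and compare Hilbert polynomials of the associated filtrations. This is where Bernstein's inequality, the Theorem of \cite{SKK} above, is implicitly used to pin down the dimension.

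\textbf{Step 2 (reduce to a torus-fixed module).} The torus $(\C^*)$ acts through the $\omega$-grading, and $M=D_n/J$ is a graded holonomic module. For a graded holonomic module, each graded piece $M_k$ is a module over the commutative ring $\C[\theta_1,\ldots,\theta_n]$. The goal is to show that $s$ acts locally finitely on $M$ with finitely many eigenvalues. A holonomic module has finite length. Consider the submodule of elements killed by some nonzero polynomial in $s$. I would show that it is all of $M$ by checking simple subquotients: a simple graded holonomic module on which the Euler operator $s$ acts has $s$ acting by a scalar shifted by the degree. This gives, for the class of $1$, a polynomial $b(s)$ with $b(s)\cdot 1\in J$, that is, $b(s)\in J$.

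\textbf{Step 3 (conclude).} Step 2 gives a nonzero $b(s)\in J\cap\C[s]$, so the monic generator $b_{I,\omega}(s)$ is nonzero. If Step 2 is technically delicate, an alternative is the argument of \cite{SST} for $\omega$ with positive entries: there $J\cap\C[\theta]$ is computed via the distraction, and a dimension count using Step 1 shows that $J\cap \C[\theta_1,\ldots,\theta_n]$ defines a finite (zero-dimensional) set. It then has nonzero intersection with $\C[s]$ by elimination, since any linear form is nonzero-constrained on a finite set. For general $\omega$, with mixed signs or zeros, one first applies the automorphism $x_i\leftrightarrow\partial_i$ (up to sign) on the coordinates with $\omega_i<0$ to make the weights nonnegative. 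The case $\omega_i=0$ is handled by treating those variables as parameters, so the finiteness argument is run over the field of rational functions in them.
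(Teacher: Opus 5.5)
The paper gives no proof here; it only cites \cite{Laurent87}, so your argument has to stand on its own. Your architecture can be turned into a complete proof: show $J={\rm in}_{(-\omega,\omega)}(I)$ is holonomic, deduce that $M=D_n/J$ has finite length, then show the Euler operator acts by scalars on its composition factors. As written, however, Step 1 is wrong and the decisive claim of Step 2 is not justified.

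\textbf{Step 1.} The perturbation is reversed. The correct identity is ${\rm in}_{({\bf 0},{\bf e})}(J)={\rm in}_{(-\omega,\omega+\epsilon{\bf e})}(I)$ for small $\epsilon>0$. Your ideal ${\rm in}_{(-\epsilon\omega,{\bf e}+\epsilon\omega)}(I)$ equals ${\rm in}_{(-\omega,\omega)}\big({\rm in}_{({\bf 0},{\bf e})}(I)\big)$, which is an initial ideal of the characteristic ideal of $I$, not of $J$.

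Moreover, the claim ``$\dim{\rm in}_{(u,v)}(I)=\dim{\rm Char}(I)$ for all $u+v>0$'' is false. Take $I=D_1(x^2\partial-1)$ and $\omega=1$. Then $J=D_1$ and ${\rm in}_{(-1,1+\epsilon)}(I)=(1)$, whereas your ideal is generated by the symbol $x^2\xi$ and is one-dimensional. Your argument would therefore show that ${\rm Char}(D_1)$ has dimension $1$ (compare Remark \ref{Entender}).

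What homogenization and comparison with the Bernstein filtration actually give is $\dim{\rm in}_{(u,v)}(I)\le n$ for $u+v>0$. Combined with the Theorem of \cite{SKK} applied to $J$, this yields: either $J=D_n$ (and $b_{I,\omega}=1$), or $J$ is holonomic.

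\textbf{Step 2.} First, $s$ itself does not have finitely many eigenvalues on $M$: on $\C[x]=D_1/D_1\partial$ with $\omega=1$ they are $0,1,2,\ldots$. The relevant operator is $\tilde s$, defined by $\tilde s(m)=sm+km$ for $m\in M_k$.
\begin{itemize}
\item Since $sP=P(s-k)$ for $P$ homogeneous of $(-\omega,\omega)$-degree $k$, $\tilde s$ is right multiplication by $s$. This is well defined because $Js\subseteq J$, so $\tilde s$ is $D_n$-linear and stabilizes graded submodules.
\item You need a \emph{graded} composition series; the factors of an ungraded one need not be graded.
\item The missing idea is why $\tilde s$ is a scalar $c_N$ on a graded-simple factor $N$. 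The graded endomorphisms of $N$ form a division algebra. Evaluation at a nonzero homogeneous element embeds it $\C$-linearly into $N$, so it has countable dimension over $\C$. Hence it equals $\C$, since an element transcendental over $\C$ would generate a copy of $\C(t)$, which has uncountable dimension.
\end{itemize}
Then $\prod_N(\tilde s-c_N)$ kills $M$. Applying it to the class of $1\in M_0$, where $\tilde s=s$, gives $\prod_N(s-c_N)\in J$, which is what you need.

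\textbf{Step 3.} Drop the fallback. Distraction only applies to torus-fixed ideals, and for non-generic $\omega$ the zero set of $J\cap\C[\theta]$ can be infinite. For example, take $\omega=(1,1)$ and $J=D_2(\partial_1+\partial_2,\theta_1+\theta_2-c)$. This ideal is holonomic and $(-\omega,\omega)$-homogeneous, so $J={\rm in}_{(-\omega,\omega)}(J)$, and it annihilates $(x_1-x_2)^c$. For $c\notin\N$ one gets $J\cap\C[\theta]=(\theta_1+\theta_2-c)$, whose zero set is a line. Also, treating the $x_i$ with $\omega_i=0$ as parameters is not meaningful, since they do not commute with the $\partial_i$.
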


\begin{rem}
Given a holonomic ideal $I\subset D_n$ and a weight vector $w\in\R^n\setminus\{\bf 0\}$ it may happen that
\[{\rm in}_{(-\omega,\omega)}(I)=D_n\]
in which case the corresponding b-function is $b_{I,\omega}(s)=1$, but apart from this pathological case, the b-function is a non-zero polynomial of positive degree.

Examples of this behaviour already appeared in \cite{Cobo} for some hypergeometric ideals. A simpler example is the following. Consider the holonomic ideal $I\subseteq D_2$
\[I=D_2\left( y+\partial_x+\partial_y,x+\partial_x+\partial_y+1\right)\]
The Gröbner fan is given by the four quadrants of $\R^2$ together with its faces and we can check that in$_{(-\omega,\omega)}(I)=D_2$ for any $\omega\in\{\lambda(1,0)+\mu(0,1)\ |\ \lambda,\mu>0\}$ the first quadrant.
\label{Entender}
\end{rem}

\vspace{3mm}

From now on we suppose that in$_{(-\omega,\omega)}(I)$ is a proper ideal in $D_n$.

\vspace{3mm}

Given a non-zero polynomial $f\in \C[x]$ we denote by $b_f(s)$ the Bernstein-Sato polynomial associated with $f$. We can recover $b_f(s)$ from $b_{I,\omega}(s)$, for a certain ideal $I$ and weight vector $\omega$, as follows. Consider the Malgrange ideal of $f$, i.e., the left ideal $I_f\subset D_{n+1}$ in the $(n+1)$-th Weyl algebra,  generated by the set $$\left\{x_{n+1}-f(x), \partial_1 + \frac{\partial f}{\partial x_1} \partial_{n+1}, \ldots, \partial_n + \frac{\partial f}{\partial x_n} \partial_{n+1}\right\}.$$
 Then we have
\begin{equation}
b_f(s)=(-1)^{deg\big(b_{I_f,\omega}(s)\big)}b_{I_f,\omega}(-s-1)
\label{special}
\end{equation}
where $w=(0,\ldots,0,1)\in \R^{n+1}$, see  e.g. \cite[Lemma 5.3.11]{SST}.

\vspace{5mm}

There are several algorithms to compute the b-functions of a holonomic ideal with respect to weights. We will follow \cite{Noro}. Once we have, for any $\omega\in\R^n\setminus\{0\}$, the initial ideal ${\rm in}_{(-\omega,\omega)}(I)$, we apply the method of indeterminate coefficients to compute the minimal polynomial of $$s=\omega_1x_1\partial_1+\omega_2x_2\partial_2+\cdots+\omega_nx_n\partial_n$$ as an endomorphism of $D_n/{\rm in}_{(-\omega,\omega)}(I)$.

Therefore we need a Gröbner basis $\mathcal G$ of ${\rm in}_{(-\omega,\omega)}(I)$ with respect to a term order so that we can compute the normal form,
$NF(s^k,\mathcal G)$, for any $k\in \Z_{>0}$.
 Since the condition $$NF(s^j,\mathcal G)+\sum_{k=0}^{j-1}a_kNF(s^k,\mathcal G)=0$$ is equivalent to $$s^j+\sum_{k=0}^{j-1}a_ks^k\in{\rm in}_{(-\omega,\omega)}(I),$$ we can use here any term order on the monomials in the Weyl algebra $D_n$.

We look for the smallest positive integer $n$ such that there exist $a_0,\ldots,a_{n-1}\in\C$ satisfying
\[NF\big(s^n+a_{n-1}s^{n-1}+\cdots+a_0,\mathcal G\big)=0.\]
Then, the b-function of the ideal $I$ with respect to the weight $\omega$ is
\[b_{I,\omega}(s)=s^n+a_{n-1}s^{n-1}+\cdots+a_0.\]

\subsection{The Gröbner fan of an ideal}

Given an ideal $I\subseteq\C[x_1,\ldots,x_n]$, the so-called Gröbner fan of $I$ was defined in \cite{MR}. It is a fan in $\R^n$ describing all possible initial ideals for $I$, i.e., for any element $\tau$ in the Gröbner fan of $I$, the ideal in$_\omega(I)$ is fixed for every $\omega\in\tau$.

\vspace{3mm}

In the non-commutative case, this notion can be extended for an ideal $I\subseteq D_n$ with respect to the filtration  induced by $(u,v)\in\R^{2n}$ such that $u+v\geq 0$ (see \cite{ACG} and \cite{SST}). It is a subdivision of $\{(u,v)\in\R^{2n}\ |\ u+v\geq 0\}$.

Since in this paper we work with the filtration induced by $(-\omega,\omega)$ with $\omega\in\R^n\setminus\{0\}$, we will define the Gröbner fan of an ideal $I\subseteq D_n$ to be the subdivision $\Sigma$ of $\R^n$ such that for any $\tau\in\Sigma$, the ideal
\[{\rm in}_{(-\omega,\omega)}(I)\]
is fixed for any $\omega\in\stackrel{\circ}{\tau}$, where $\stackrel{\circ}{\tau}$ stands for the relative interior of $\tau$, or, in other words, the cones in $\Sigma$ correspond to the different possible initial ideals ${\rm in}_{(-\omega,\omega)}(I)$. It is called the small Gröbner fan of $I$ (see \cite{SST}).

\section{On the b-function with respect to weights of the ideal Ann$(f^\ell)$}
\label{Main}

Given $f\in\C[x_1,\ldots,x_n]$ and $\ell\in\C$ the annihilating ideal of $f^\ell$ is
\[{\rm Ann}(f^\ell)=\{P\in D_n\ |\ P\bullet f^\ell=0\}\]
where the differential operators act here as
\[\begin{array}{l}
\partial_i\bullet f^\ell=\ell f^{\ell-1}\frac{\partial f}{\partial x_i}\\
\\
x_i\bullet f^\ell=x_if^\ell\\
\end{array}\]
for $1\leq i\leq n$.

\vspace{2mm}

Let us fix notation. A generic polynomial $f\in\C[{\bf x}]$ will be denoted as
\[f=\sum_{\gamma\in\N^n}c_\gamma{\bf x}^\gamma\]
where only a finite number of coefficients $c_\gamma$ are non-zero, while a generic operator $P\in D_n$ in its normal form will be denoted as
\[P=\sum_{\alpha,\beta\in\N^n}c_{\alpha\beta}{\bf x}^\alpha\partial^\beta\]
where only a finite number of coefficients $c_{\alpha\beta}$ are non-zero.

\begin{thm} (see \cite{KK})
Let $f\in\C[x_1,\ldots,x_n]$. The ideal Ann$(f^\ell)$ is a regular holonomic $D_n$-ideal for any $\ell\in\C$.
\end{thm}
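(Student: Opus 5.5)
The plan is to show that the $D_n$-module $M:=D_n/{\rm Ann}(f^\ell)\cong D_n\bullet f^\ell$ is a submodule of a regular holonomic module. Regular holonomicity passes to submodules and quotients, so this suffices. The natural ambient module is the localization $\C[x_1,\ldots,x_n]_f[s]f^s$ specialized at $s=\ell$. Even more directly, I would use $\mathcal{O}_f f^\ell$: the free rank-one $\C[{\bf x}]_f$-module generated by the symbol $f^\ell$, with $\partial_i$ acting by $\partial_i(g f^\ell)=(\partial_i g+\ell g f^{-1}\partial_i f)f^\ell$. This is an algebraic flat connection on the complement $U=\C^n\setminus V(f)$, pushed forward along the open embedding $j:U\hookrightarrow\C^n$. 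Its solutions are the branches of $f^{-\ell}$ up to scalar, so the local monodromy around $V(f)$ is $e^{\pm 2\pi i\ell}$, which is finite-order data.

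The key steps, in order, are as follows.

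(1) Show that $\mathcal{O}_U f^\ell$ is a regular integrable connection on $U$. Its solutions grow at most polynomially (moderately) near $V(f)$ and at infinity, since $|f^{-\ell}|$ is bounded by a power of $|f|$ and of $\|{\bf x}\|$. Deligne's criterion of moderate growth along curves then gives regularity.

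(2) Apply the theorem that the direct image $j_*$ under an open embedding (more generally, under any morphism) preserves regular holonomicity, due to Kashiwara–Kawai and Mebkhout. This makes $N=j_*\mathcal{O}_Uf^\ell$ regular holonomic on $\C^n$.

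(3) Observe that $D_n\bullet f^\ell\subseteq N$ is the cyclic submodule generated by $f^\ell$, with annihilator exactly ${\rm Ann}(f^\ell)$. Hence $D_n/{\rm Ann}(f^\ell)$ is a submodule of $N$ and is therefore regular holonomic.

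For holonomicity alone there is a more elementary route that avoids analytic input. By Bernstein, $\C[{\bf x}]_f[s]f^s$ is generated by $f^{s-k}$ with a functional equation $b_f(s)f^s=P(s)f^{s+1}$. Its specialization gives holonomicity of $\C[{\bf x}]_f f^\ell$ through a Bernstein-filtration dimension count: $\dim_\C$ of $\{g f^{\ell-k}: \deg g\le k(\deg f+1)\}$ grows like $k^n$. Submodules of holonomic modules are holonomic, and this shows ${\rm Ann}(f^\ell)$ is holonomic in the sense of the paper's definition, via $\dim{\rm Char}=n$.

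The main obstacle is the regularity part, specifically step (2): showing that regularity survives extension across the divisor $V(f)$ and at infinity. Proving this from scratch requires resolution of singularities to reduce to a normal crossing situation. There one checks explicitly that $x^{a}$-type modules are regular, and then uses that direct images under proper maps preserve regularity. In the paper this would simply be cited, consistent with the reference to \cite{KK}.
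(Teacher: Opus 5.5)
Your argument is correct. It is the standard way this result is derived from general theory. The paper itself gives no argument; it quotes the statement from \cite{KK}, so the comparison is between a black-box citation and your reduction.

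You identify $D_n/{\rm Ann}(f^\ell)$ with the cyclic submodule $D_n\bullet f^\ell$ of $N=\C[{\bf x}]_f\,f^\ell$. Since $U=\C^n\setminus V(f)$ is affine, $j$ is an affine morphism, $j_*$ is exact, and $N$ is literally $\Gamma(U,\mathcal{O}_Uf^\ell)$ regarded as a $D_n$-module. Steps (2) and (3), together with the fact that regular holonomic modules form a Serre subcategory, then give the claim uniformly in $\ell\in\C$.

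For step (1) you can avoid growth estimates entirely. By Deligne's curve criterion it suffices to pull the connection $d+\ell\,d\log f$ back along any curve $\gamma\colon C\to U$. The pullback is $d+\ell\,d\log(f\circ\gamma)$, which has at most simple poles at the points of $\bar C\setminus C$ because $f\circ\gamma$ is a rational function on $\bar C$.

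What your route buys is an explanation. The only input is that $f^\ell$ generates a rank-one regular connection on the complement of $V(f)$. Regularity at infinity is handled explicitly, so you obtain the global algebraic notion, which is the strongest of the usual formulations. It also places $D_n\bullet f^\ell$ inside the same module $\C[{\bf x}]_ff^\ell$ that underlies Theorem~\ref{thmAnn}. What it does not buy is economy: step (2) is itself a Kashiwara--Kawai/Mebkhout theorem resting on resolution of singularities. So, as you acknowledge, you repackage the citation rather than replace it.

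A few inaccuracies, none of which affects the main argument:
\begin{itemize}
\item Along a component of $V(f)$ where $f$ vanishes to order $m$, the local monodromy is $e^{\pm 2\pi i\ell m}$. It is not of finite order when $\ell$ is irrational, but nothing uses this.
\item In the holonomicity paragraph, $\C[{\bf x}]_f[s]f^s$ is not generated over $D_n[s]$ by a single $f^{s-k}$. Already for $f=x$, every element of $D_1[s]x^{s-k}$ has its $x^{s-k-1}$-coefficient divisible by $s-k$. The statement holds over $\C(s)$, or after specializing $s=\ell$ with $k$ so large that $b_f(\ell-k-j)\neq0$ for all $j\geq1$.
\item The functional equation is not needed there at all. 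The filtration $F_k=\{gf^{\ell-k}\ :\ \deg g\leq k(\deg f+1)\}$ is exhaustive and compatible with the Bernstein filtration, with $\dim_\C F_k=O(k^n)$. By Bernstein's criterion this already shows that $\C[{\bf x}]_ff^\ell$, and hence its submodule $D_n\bullet f^\ell$, is holonomic.
\end{itemize}
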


The ideal Ann$(f^\ell)$ for a given $f$ is very difficult to compute. There are algorithms in \cite{SST} passing through the ring $D_n[s]$, where $s$ is a new variable. Computing the $s$-parametric annihilating ideal Ann$(f^s)$ in $D_n[s]$ and specialize $s=\ell$ works for most of the cases.

\begin{thm} (see Theorem 5.3.13 in \cite{SST})
 Let $\alpha_0$ be the minimal integer root of the Bernstein-Sato polynomial $b_f$ of $f$, and suppose that
 \[\ell\notin\alpha_0+1+\N\]
 Then Ann$(f^\ell)\subseteq D_n$ is obtained from Ann$(f^s)\subseteq D_n[s]$ by replacing $s$ with $\ell$.
\label{thmAnn}
\end{thm}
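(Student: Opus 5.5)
Theorem~\ref{thmAnn} asserts that ${\rm Ann}(f^\ell)$ equals the image of ${\rm Ann}(f^s)$ under the specialization map $\sigma_\ell: D_n[s]\to D_n$, $s\mapsto \ell$. The inclusion ${\rm Ann}(f^s)|_{s=\ell}\subseteq {\rm Ann}(f^\ell)$ is immediate: if $P(s)\bullet f^s=0$ formally in $\C[{\bf x}]_f[s]f^s$, then specializing $s=\ell$ gives $P(\ell)\bullet f^\ell=0$. The content of the statement is the reverse inclusion, and I would follow the standard argument of Kashiwara, as presented in \cite{SST}, which rests on the fact that $D_n[s]f^s$ is a nice module whose fibres at $s=\ell$ behave well away from shifted integer roots of $b_f$.

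\textbf{Plan.} First I introduce the $D_n[s]$-module $N=D_n[s]f^s\cong D_n[s]/{\rm Ann}(f^s)$ and the evaluation map $N/(s-\ell)N \to D_n f^\ell$. Then ${\rm Ann}(f^s)|_{s=\ell}+D_n[s](s-\ell)$ corresponds to the annihilator of the class of $f^s$ in $N/(s-\ell)N$. So the theorem is equivalent to the injectivity of
\[
\phi_\ell: N/(s-\ell)N\longrightarrow D_n f^\ell,\qquad [P(s)f^s]\mapsto P(\ell)f^\ell .
\]
Second, I use the Bernstein–Sato functional equation $b_f(s)f^s=Q(s)f^{s+1}$ to compare $N$ with its shifts $N_k=D_n[s]f^{s+k}\subseteq N$. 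The key input is that $N/N_1$ is annihilated by $b_f(s)$, so $N_k/N_{k+1}$ is annihilated by $b_f(s+k)$.

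Third, I prove injectivity of $\phi_\ell$. Suppose $P(\ell)f^\ell=0$. Work in $\C[{\bf x}]_f[s]f^s$. For large $k$, $f^{\ell}$ generated modules stabilize: $D_nf^{\ell+k}$ for $k\gg0$ relates to $\C[{\bf x}]_f f^\ell$, and the specialization map on $N_k$ for $k$ large is injective. This holds because $N_k\subset \C[{\bf x}][s] f^{s}$ after multiplying by a power of $f$, and on polynomial-type expressions $g(x,s)f^{s+k}$ vanishing at $s=\ell$ forces divisibility by $s-\ell$. Then I descend from $N_k$ to $N=N_0$ step by step. At each step $N_{j}/N_{j+1}$ is killed by $b_f(s+j)$. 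If $b_f(\ell+j)\ne0$, then $s-\ell$ acts invertibly on $N_j/N_{j+1}$, and a snake-lemma argument on $0\to N_{j+1}\to N_j\to N_j/N_{j+1}\to0$ tensored with $\C[s]/(s-\ell)$ shows that $\ker\phi_\ell$ on $N_j$ comes from $N_{j+1}$. The hypothesis $\ell\notin\alpha_0+1+\N$ gives exactly $\ell+j\neq$ any integer root of $b_f$ for all $j\ge 0$ when $\ell$ is an integer, since $\ell+j$ would be an integer root $\ge \alpha_0$, forcing $\ell\le\alpha_0$… I would recheck indices here. When $\ell\notin\Z$, the rational roots of $b_f$ are negative, so the same conclusion needs $\ell+j$ not a root for all $j$, which may fail. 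In that case I would instead use the $\ell\notin$ root set shifted argument via $N_{-k}$, working with $f^{s-k}$, which is exactly why the theorem is phrased via integer roots. This needs care.

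\textbf{Main obstacle.} The delicate step is the descent: correctly matching the indices of the hypothesis $\ell\notin\alpha_0+1+\N$ with the nonvanishing of $b_f(\ell+j)$, or of $b_f(\ell-j)$ depending on direction, and handling the torsion $\operatorname{Tor}_1^{\C[s]}(N_j/N_{j+1},\C[s]/(s-\ell))$. I expect to resolve this by following \cite[Prop.~6.2]{SST}, or Kashiwara's original lemma. There, injectivity of $N/(s-\ell)N\to D_nf^\ell$ is proved precisely when $b_f(\ell-j)\neq0$ for all $j\ge1$, which for integer $\ell$ is the stated condition. For non-integral $\ell$ it is automatic, after replacing $b_f$ by its integer-root factor via the local argument.
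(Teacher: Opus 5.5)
Your reformulation is correct: the inclusion ${\rm Ann}(f^s)_{|s=\ell}\subseteq{\rm Ann}(f^\ell)$ is trivial, and the theorem amounts to injectivity of $N/(s-\ell)N\to D_nf^\ell$. The paper gives no proof of its own; it only quotes \cite{SST}, which is Kashiwara's argument. However, your mechanism for proving injectivity runs in the wrong direction and does not work.

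\textbf{The upward shifts fail.} The $D_n$-linear bijection $P(s)f^s\mapsto P(s+k)f^{s+k}$ maps $N$ onto $N_k=D_n[s]f^{s+k}$, and $(s-\ell-k)N$ onto $(s-\ell)N_k$. So injectivity of the specialization on $N_k$ is the same problem at $\ell+k$, and it is false in general for large $k$. Already for $f=x_1$ (so $b_f=s+1$, $\alpha_0=-1$) and $\ell=-1$, the map $N_k/(s+1)N_k=D_1/D_1(x_1\partial_1-k+1)\to D_1x_1^{k-1}$ has the class of $\partial_1^k$ in its kernel for every $k\geq 1$.

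Your justification conflates divisibility in the free module $\C[{\bf x}]_f[s]f^s$ with divisibility inside $N_k$. From $g({\bf x},\ell)=0$ you get $g=(s-\ell)h$, but nothing forces $hf^{s+k}\in N_k$. That is exactly the point where $b_f$ has to be used.

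Moreover, your descent from $N_k$ to $N_0$ needs $b_f(\ell+j)\neq 0$ for $0\leq j<k$. This fails for $\ell=\alpha_0$ itself, e.g.\ $\ell=-1$ for the cusps in the paper. For integer $\ell$ the hypothesis $\ell\notin\alpha_0+1+\N$ means $\ell\leq\alpha_0$, so $\ell+j$ hits the root $\alpha_0$.

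\textbf{The missing idea is to shift downwards.} Let $P\in D_n$ have order $m$ with $P\bullet f^\ell=0$.
\begin{enumerate}
\item[(a)] Write $P\bullet f^s=a({\bf x},s)f^{s-m}$ with $a\in\C[{\bf x}][s]$. Then $P\bullet f^\ell=0$ forces $a({\bf x},\ell)=0$, so $a=(s-\ell)a'$ with $a'\in\C[{\bf x}][s]$.
\item[(b)] Shifting the functional equation gives $b_f(s-\nu)f^{s-\nu}=Q(s-\nu)f^{s-\nu+1}$. With $B(s)=\prod_{\nu=1}^m b_f(s-\nu)$ this yields $B(s)a'f^{s-m}=R(s)f^s$, where $R=a'\,Q(s-m)Q(s-m+1)\cdots Q(s-1)\in D_n[s]$.
\item[(c)] Therefore $B(s)P-(s-\ell)R(s)\in{\rm Ann}(f^s)$. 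Setting $s=\ell$ gives $B(\ell)P\in{\rm Ann}(f^s)_{|s=\ell}$.
\end{enumerate}
So the condition actually needed is $b_f(\ell-\nu)\neq 0$ for all $\nu\geq 1$. For $\ell\in\Z$ this is exactly $\ell\leq\alpha_0$, i.e.\ $\ell\notin\alpha_0+1+\N$. In your language: the modules $D_n[s]f^{s-k}$ exhaust $\C[{\bf x}]_f[s]f^s$, on which specialization is obviously injective, and each quotient $D_n[s]f^{s-k}/D_n[s]f^{s-k+1}$ is killed by $b_f(s-k)$.

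\textbf{The non-integral case is not automatic.} Take $f=x_1^2$. Then $b_f=(s+1)(s+\frac12)$, $\alpha_0=-1$, and ${\rm Ann}(f^s)=D_1[s](x_1\partial_1-2s)$. Yet $\partial_1^2\bullet f^{1/2}=0$, while $\partial_1^2\notin D_1(x_1\partial_1-1)$, as comparing principal symbols shows. So for $\ell\notin\Z$ the hypothesis $\ell\notin\alpha_0+1+\N$ is not sufficient, and one must assume Kashiwara's condition $b_f(\ell-\nu)\neq 0$ for all $\nu\geq 1$.
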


A possible way to compute the $s$-parametric annihilator Ann$(f^s)$ is given by an algorithm by Briançon y Maisonobe.
See \cite{SST} for other algorithms.

\begin{thm} (\cite{BM})
Consider the {\em shift algebra} $S:=\C\langle\partial_t,\sigma\ |\ \sigma\partial_t=\partial_t\sigma+\partial_t\rangle$ and $D_n^S:=D_n\bigotimes_\C S$. If, for any $f\in\C[x_1,\ldots,x_n]$, we define
\[I:=\langle\sigma+f\cdot\partial_t,\{\partial_i+\frac{\partial f}{\partial x_i}\cdot\partial_t\}_{i=1,\ldots,n}\rangle\subseteq D_n^S\]
then
\[{\rm Ann}(f^s)=I_{|_{\sigma=s}}\cap D_n[s]\]
\label{parametric}
\end{thm}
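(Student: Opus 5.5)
Since this theorem is quoted from \cite{BM}, I would prove it by reading the shift algebra $S$ as an algebra of operators on the module $\C[{\bf x},f^{-1},s]\,f^s$. The variable $t$ with $\partial_t$ is replaced by the shift $\partial_t\bullet g(s)f^s=-g(s-1)f^{s-1}$ (up to sign convention), and $\sigma$ acts as multiplication by $-s$, normalized so that $\sigma\partial_t=\partial_t\sigma+\partial_t$ holds. A one-line check confirms the relation. With the signs matched to the generators, $\sigma+f\partial_t$ and $\partial_i+\frac{\partial f}{\partial x_i}\partial_t$ then annihilate $f^s$. Indeed, $\partial_i\bullet f^s=s\,\frac{\partial f}{\partial x_i}f^{s-1}$ is cancelled by $\frac{\partial f}{\partial x_i}\partial_t\bullet f^s$, and $f\partial_t\bullet f^s=\mp s f^s$ is cancelled by $\sigma$. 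This gives the inclusion $I_{|_{\sigma=s}}\cap D_n[s]\subseteq{\rm Ann}(f^s)$, once I check that $D_n[s]$ embeds into $D_n^S$ compatibly via $s\mapsto\sigma$, up to the fixed sign. This holds because $\sigma$ commutes with $D_n$ and acts as $s$ on $f^s$.

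For the reverse inclusion, the plan is to use Malgrange's description first. Let $P(s)\in D_n[s]$ annihilate $f^s$. Write $s=-\partial_t t$ in the Malgrange module $D_{n+1}\delta(t-f)$. A standard fact, provable by a division argument, is that ${\rm Ann}_{D_{n+1}}(\delta(t-f))$ is generated by $t-f$ and the $\partial_i+\frac{\partial f}{\partial x_i}\partial_t$. Under the Malgrange isomorphism $f^s\mapsto\delta(t-f)$, with $s\mapsto-\partial_tt$, an operator $P(s)$ kills $f^s$ iff $P(-\partial_t t)$ kills $\delta$. Hence $P(-\partial_t t)\in D_{n+1}(t-f,\partial_i+f_i\partial_t)$.

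Next, I would transfer this membership to $D_n^S$. The algebra generated by $D_n$, $\partial_t$ and $\sigma:=-\partial_t t$ inside $D_{n+1}$ satisfies exactly the defining relation of $S$, so $D_n^S$ embeds into $D_{n+1}$. Moreover $\sigma+f\partial_t=-\partial_t(t-f)$ up to reordering, since $\partial_t$ commutes with $f$. The key step is therefore a homogeneity argument. Grade $D_{n+1}$ by the $V$-filtration weight $(t\mapsto1,\partial_t\mapsto-1)$. Both $P(-\partial_tt)$ and the $\partial_i+f_i\partial_t$ have degree $\le 0$ in an appropriate sense, while $t-f$ is inhomogeneous. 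I would show that any element of the ideal lying in the degree-$0$-and-negative part, namely the subalgebra $D_n^S$, can be rewritten using only $\partial_t(t-f)$ and the $\partial_i+f_i\partial_t$ with coefficients in $D_n^S$. To do this, I would multiply a representation $P=A(t-f)+\sum B_i(\partial_i+f_i\partial_t)$ on the left by a suitable power $\partial_t^k$, or equivalently clear positive powers of $t$ using $t\partial_t=-\sigma-1$ and the shift relations. This expresses all coefficients in $D_n^S$.

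The main obstacle is this last step, which controls the positive-$t$-degree terms in $A$ and $B_i$. I would argue by induction on the maximal $t$-degree appearing, using that $\partial_t$ is a non-zero-divisor acting injectively on $\C[{\bf x},f^{-1},s]f^s$. An alternative route is to show that $D_n^S/I$ is $\partial_t$-torsion-free and embeds in that module. Then $Q\bullet f^s=0$ forces $Q\in I$ directly, which may be cleaner: it needs $D_n^S/I\cong D_n^S\cdot f^s$. I would prove this by comparing both sides with $D_{n+1}\delta/\!\!\sim$ after localizing at $\partial_t$.
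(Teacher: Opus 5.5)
The paper gives no proof of this statement; it is quoted from \cite{BM}. Your argument therefore has to stand on its own, and it has a genuine gap in the reverse inclusion, at exactly the step you flag as the main obstacle. That step is $I_f\cap D_n^S\subseteq I$, where $I_f=D_{n+1}(t-f,\partial_i+f_i\partial_t)$, $f_i:=\partial f/\partial x_i$, and $D_n^S$ is identified with the part of $D_{n+1}$ of $t$-weight $\le 0$.

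Your $\partial_t^m$ trick is sound as far as it goes. If $m$ exceeds the $t$-weights occurring in $A$ and the $B_i$, then $\partial_t^mA\in D_n^S\partial_t$ and $\partial_t^mB_i\in D_n^S$, so $\partial_t^mP\in I$ because $\partial_t(t-f)=-(\sigma+f\partial_t)$. But passing from $\partial_t^mP\in I$ to $P\in I$ requires left multiplication by $\partial_t$ to be injective on $D_n^S/I$.

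Your justification, that $\partial_t$ acts injectively on $\C[{\bf x},f^{-1},s]f^s$, only gives injectivity on $D_n^S/{\rm Ann}_{D_n^S}(f^s)$. Transferring it to $D_n^S/I$ needs $I={\rm Ann}_{D_n^S}(f^s)$, which is a strengthening of the theorem itself, so this route is circular. The alternative induction on the $t$-degree is not carried out. Since neither $t-f$ nor $\partial_i+f_i\partial_t$ is weight-homogeneous, comparing top-weight parts only produces syzygies of $(t,\partial_1,\ldots,\partial_n)$, which you would still have to lift.

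Separately, the action you write down is off. You need $\sigma\bullet g(s)f^s=s\,g(s)f^s$ and $\partial_t\bullet g(s)f^s=-s\,g(s-1)f^{s-1}$. A pure shift together with $\sigma\mapsto -s$ gives $[\sigma,\partial_t]=-\partial_t$ and does not kill $f^s$, although your later computation silently uses the correct factor $s$.

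The missing idea is that the generators of $I$ pairwise commute. More precisely, define $\Phi$ by $x_i\mapsto x_i$, $\partial_t\mapsto\partial_t$, $\partial_i\mapsto\partial_i+f_i\partial_t$, $\sigma\mapsto\sigma+f\partial_t$. This map respects the defining relations and is an automorphism of $D_n^S$; it is the coordinate change $t\mapsto t-f$, which preserves the weight-$\le 0$ part of $D_{n+1}$.

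Hence $I=\Phi\big(D_n^S(\partial_1,\ldots,\partial_n,\sigma)\big)$. Using the PBW basis ${\bf x}^\alpha\partial_t^a\partial^\beta\sigma^b$, one gets $D_n^S=\C[{\bf x},\partial_t]\oplus I$ as vector spaces. This immediately gives the $\partial_t$-torsion-freeness you need.

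It also makes the detour through Malgrange's ideal unnecessary. Suppose $Q\in D_n^S$ kills $f^s$ and $Q\equiv\sum_a r_a({\bf x})\partial_t^a$ modulo $I$. Then
\[0=Q\bullet f^s=\Big(\sum_a(-1)^a r_a({\bf x})\,s(s-1)\cdots(s-a+1)f^{-a}\Big)f^s,\]
and comparing degrees in $s$ in $\C[{\bf x},f^{-1}][s]$ forces every $r_a=0$. Thus $I={\rm Ann}_{D_n^S}(f^s)$. Intersecting with $D_n[\sigma]$, on which $\sigma$ acts as $s$, gives the statement.
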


We do not have a description of the annihilating ideal Ann$(f^\ell)$ for a general $f$ and hence we can not hope to find an explicit description of the b-function.

\vspace{3mm}

For the next definitions and its properties we refer to \cite{Ew}.
\begin{defn}
Given $f=\sum_\gamma c_\gamma{\mathbf x}^\gamma\in\C[x_1,\ldots,x_n]$ the support of $f$ is the set
\[{\rm Supp}(f)=\{\gamma\in\N^n\ |\ c_\gamma\neq 0\}\]
Moreover
\[\begin{array}{l}
{\rm deg}(f)={\rm max}\{|\gamma|\ |\ \gamma\in{\rm Supp}(f)\}\\
\\
{\rm ord}(f)={\rm min}\{|\gamma|\ |\ \gamma\in{\rm Supp}(f)\}\\
\end{array}\]
The {\em Newton polytope} of $f$ is
\[\Gamma(f)={\rm CH}\big({\rm Supp}(f)\big),\]
where CH stands for the convex hull. The vertices of $\Gamma(f)$ are elements of Supp$(f)$.

\vspace{2mm}

The support function of the polytope $\Gamma(f)$ is
\[\begin{array}{cccc}
{\rm ord}_f: &  \R^n & \longrightarrow & \R\\
 & \omega & \mapsto & {\rm min}\{\langle\gamma,\omega\rangle\ |\ \gamma\in\Gamma(f)\}\\
 \end{array}\]
The face of $\Gamma(f)$ determined by $\omega\in\R^n$ is the set
\[\tau_\omega=\{ u\in\Gamma(f)\ |\ \langle u,\omega\rangle={\rm ord}_f(\omega)\}\]
All faces of $\Gamma(f)$ are of this form, and if $\omega=0$ then the face $\tau_\omega$ is the Newton polytope itself.

The dual fan $\Sigma_f$ associated with $\Gamma(f)$ is a fan supported in $\R^n$ formed by the cones
\[C_\tau=\{\omega\in\R^n\ |\ \langle u,\omega\rangle={\rm ord}_f(\omega)\mbox{ for all }u\in\tau\}\]
with $\tau$ running through the faces of $\Gamma(f)$. Notice that for any cone $C$ in $\Sigma_f$, if $\omega,\omega'\in\stackrel{\circ}{C}$ then $\tau_\omega=\tau_{\omega'}$.

 Moreover for any $\tau$ face of $\Gamma(f)$ we set
 \[f_\tau=\sum_{\gamma\in\tau\cap{\rm Supp}(f)}c_\gamma{\bf x}^\gamma\]
 and
 \[V_\tau=\{\omega\in\R^n\ |\ \langle\omega,\gamma\rangle=\langle\omega,\gamma'\rangle\mbox{ for all }\gamma,\gamma'\in\tau\cap{\rm Supp}(f)\}\]
the vector space dual to the face $\tau$. Notice that $C_\tau\subset V_\tau$.
\end{defn}

\begin{lem}
Given $f\in\C[x_1,\ldots,x_n]$ and $\ell\in\N$ there is an injection
\[\begin{array}{cccc}
\varphi_\ell: & \Gamma(f) & \longrightarrow & \Gamma(f^\ell)\\
 & \gamma & \mapsto & \ell\gamma\\
\end{array}\]
which induces a bijection among faces of $\Gamma(f)$ and $\Gamma(f^\ell)$ for any polynomial $f\in\C[x_1,\ldots,x_n]$. As a consequence
\[\Sigma_f=\Sigma_{f^\ell}\]
\label{ell}
\end{lem}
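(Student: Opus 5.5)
The whole statement reduces to the classical identity $\Gamma(f^\ell)=\ell\,\Gamma(f)$ for $\ell\in\N$ (for $\ell=0$ everything degenerates to a point, so take $\ell\geq 1$). Once this equality of polytopes is established, $\varphi_\ell$ is the restriction of the linear isomorphism $x\mapsto \ell x$ of $\R^n$. It is therefore injective, and it is a bijection from $\Gamma(f)$ onto $\Gamma(f^\ell)$ preserving convex combinations. Such an affine isomorphism carries faces to faces, since a face $\tau_\omega$ is mapped to the face of $\Gamma(f^\ell)$ cut out by the same $\omega$. Finally, the support functions satisfy ${\rm ord}_{f^\ell}(\omega)=\ell\,{\rm ord}_f(\omega)$, so the cones $C_\tau$ and $C_{\ell\tau}$ coincide and $\Sigma_f=\Sigma_{f^\ell}$.

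To prove $\Gamma(f^\ell)=\ell\Gamma(f)$ I will argue by induction on $\ell$, using the standard fact $\Gamma(gh)=\Gamma(g)+\Gamma(h)$ (Minkowski sum). Then $\Gamma(f^\ell)=\Gamma(f)+\cdots+\Gamma(f)$, and for a convex set $K$ one has $K+\cdots+K=\ell K$. The inclusion $\ell K\subseteq \ell$-fold sum is obvious, and the reverse follows from $(k_1+\cdots+k_\ell)=\ell\cdot\frac{1}{\ell}\sum k_i$ with $\frac1\ell\sum k_i\in K$ by convexity.

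For $\Gamma(gh)=\Gamma(g)+\Gamma(h)$ the inclusion $\subseteq$ is immediate, since ${\rm Supp}(gh)\subseteq{\rm Supp}(g)+{\rm Supp}(h)$. The reverse inclusion is the main obstacle, because cancellation could a priori kill monomials. It suffices to show that every vertex $v$ of $\Gamma(g)+\Gamma(h)$ lies in ${\rm Supp}(gh)$. I will take a generic $\omega$ with $\tau_\omega(\Gamma(g)+\Gamma(h))=\{v\}$. Then $v=a+b$ with $\{a\}=\tau_\omega(\Gamma(g))$ and $\{b\}=\tau_\omega(\Gamma(h))$, and $a,b$ are vertices, hence lie in the supports. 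Any other pair $(a',b')$ of support points with $a'+b'=v$ would give $\langle a'+b',\omega\rangle={\rm ord}$, forcing $a'=a$ and $b'=b$. So the coefficient of ${\bf x}^v$ in $gh$ is $c_a c_b\neq0$.

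Directly for $f^\ell$, the same argument reads: at a vertex $\ell\gamma$ with $\gamma$ a vertex of $\Gamma(f)$, the coefficient of ${\bf x}^{\ell\gamma}$ in $f^\ell$ is $c_\gamma^\ell\neq 0$, since $\ell\gamma$ is uniquely minimized by $\omega$. This gives all vertices of $\ell\Gamma(f)$, hence equality.
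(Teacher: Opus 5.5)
Your proof is correct and follows the same route as the paper: $\Gamma(f^\ell)$ is the $\ell$-fold Minkowski sum of $\Gamma(f)$, hence the homothety $\ell\,\Gamma(f)$, and the face bijection and $\Sigma_f=\Sigma_{f^\ell}$ follow from ${\rm ord}_{f^\ell}=\ell\,{\rm ord}_f$. You go further than the paper in two places: you prove the Minkowski-sum identity (the coefficient at each vertex $\ell\gamma$ is $c_\gamma^\ell\neq 0$), which the paper simply quotes, and you correctly exclude $\ell=0$, where injectivity and $\Sigma_f=\Sigma_{f^\ell}$ actually fail unless $\Gamma(f)$ is a point.
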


{\em Proof.}
It follows from the fact that the Newton polytope of $f^\ell$ is the Minkowski sum
\[\Gamma(f^\ell)=\Gamma(f)+\stackrel{(\ell)}{\cdots}+\Gamma(f)\]
Hence $\Gamma(f^\ell)$ is an homothecy of $\Gamma(f)$ with center the origin and ratio $\ell$. If $\Gamma_0(f)\subseteq{\rm Supp}(f)$ denotes the 0-dimensional faces of $\Gamma(f)$, then
\[\ell\Gamma_0(f)\subset\Gamma_0(f^\ell)\]
and $\Gamma(f^\ell)$ is determined by $\ell\Gamma_0(f)$. In particular the induced subdivision of $\R^n$ is the same, $\Sigma_{f^\ell}=\Sigma_f$, and the faces of $\Gamma(f)$ and $\Gamma(f^\ell)$ are in bijection by the homothecy. For any $\tau_\ell$ face of $\Gamma(f^\ell)$ there exists a unique $\tau$ face of $\Gamma(f)$ such that $\tau_\ell$ is the homothecy of $\tau$. Moreover if $\{\ell\gamma_1,\ldots,\ell\gamma_k\}$ are the vertices of $\tau_\ell$, then $\{\gamma_1,\ldots,\gamma_k\}$ are the vertices of $\tau$.
\hfill$\Box$

\vspace{2mm}

\begin{defn}
Let $\omega\in\R^n\setminus\{0\}$, a polynomial $f=\sum_\gamma c_\gamma{\bf x}^\gamma\in\C[x_1,\ldots,x_n]$ and an operator $P=\sum_{\alpha,\beta}c_{\alpha\beta}{\bf x}^\alpha\partial^\beta\in D_n$. We call
\[{\rm deg}_\omega(f)={\rm max }\ \{\langle\omega,\gamma\rangle\ |\ \gamma\in{\rm Supp}(f)\}\]
the weighted degree of $f$, and
\[{\rm deg}_{(-\omega,\omega)}(P)={\rm max}\ \{-\langle\omega,\alpha\rangle+\langle\omega,\beta\rangle\ |\ c_{\alpha\beta}\neq 0\}\]
the weighted degree of $P$. Moreover we call
\[{\rm in}_{(-\omega,\omega)}(P)=\sum_{-\langle\omega,\alpha\rangle+\langle\omega,\beta\rangle={\rm deg}_{(-\omega,\omega)}(P)}c_{\alpha\beta}{\bf x}^\alpha\partial^\beta\]
the initial part of $P$ with respect to $(-\omega,\omega)$.
\end{defn}

\vspace{2mm}

\begin{Not}
For any vector $\alpha=(\alpha_1,\ldots,\alpha_n)\in\Z^n$ we will use the notations
\[\begin{array}{l}
|\alpha|=\alpha_1+\cdots+\alpha_n\\
\\
\alpha!=\alpha_1!\cdots\alpha_n!\\
\end{array}\]

We denote by $\{{\bf e_1},\ldots,{\bf e_n}\}$ the canonical bases of $\R^n$, and ${\bf e}=(1,\ldots,1)$.

\label{notations}
\end{Not}

\vspace{2mm}

\begin{lem}
Given a polynomial $f=\sum_\gamma c_\gamma{\bf x}^\gamma\in\C[x_1,\ldots,x_n]$ and a vector $\beta\in\N^n$,
\[\partial^\beta\bullet f=\sum_{\gamma\geq\beta}c_\gamma\frac{\gamma!}{(\gamma-\beta)!}{\bf x}^{\gamma-\beta}\]
where by $\beta\leq\gamma$ we mean $\beta_i\leq\gamma_i$ for $1\leq i\leq n$. Moreover,
\[\partial^\beta\bullet\frac{1}{f}=\sum_{\beta_1+\cdots+\beta_r=\beta}\frac{(-1)^r}{f^{r+1}}\frac{r!}{i_1!\cdots i_s!}\frac{\beta!}{\beta_1!\cdots\beta_r!}\prod_{j=1}^r\big(\partial^{\beta_j}\bullet f\big)\]
where for any partition
\[\beta=\beta_1+\cdots+\beta_r\]
of the vector $\beta$, we denote by $i_1,\ldots,i_s\geq 1$ with $s\leq r$, the positive integers such that $r=i_1+\cdots+i_s$ and we can write this partition as
\[\beta_1+\cdots+\beta_r=\beta_1^{(1)}+\cdots+\beta_{i_1}^{(1)}+\cdots+\beta_1^{(s)}+\cdots+\beta_{i_s}^{(s)}\]
with
\[\beta_1^{(j)}=\beta_2^{(j)}=\cdots=\beta_{i_j}^{(j)}\]
for $1\leq j\leq s$.
\label{Tec}
\end{lem}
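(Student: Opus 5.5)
The plan has two parts, one for each formula in Lemma \ref{Tec}.

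\emph{First formula.} By linearity of the action it suffices to treat a single monomial ${\bf x}^\gamma$. The operators $\partial_i$ commute and each acts only on the variable $x_i$, so
\[\partial^\beta\bullet{\bf x}^\gamma=\prod_{i=1}^n\partial_i^{\beta_i}\bullet x_i^{\gamma_i}.\]
The one-variable fact $\partial_i^{b}\bullet x_i^{c}=\frac{c!}{(c-b)!}x_i^{c-b}$ for $b\leq c$, and $0$ for $b>c$, follows by induction on $b$. Multiplying over $i$ gives $\frac{\gamma!}{(\gamma-\beta)!}{\bf x}^{\gamma-\beta}$ when $\gamma\geq\beta$ and zero otherwise. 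Summing over $\gamma$ with the coefficients $c_\gamma$ yields the first formula.

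\emph{Second formula.} I would obtain it from the multivariate Faà di Bruno formula applied to the composition $g\circ f$ with $g(t)=t^{-1}$. The $r$-th derivative of $g$ is $g^{(r)}(t)=(-1)^r r!\,t^{-r-1}$.

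The Faà di Bruno formula for $\partial^\beta(g\circ f)$ sums over unordered set partitions of the multiset of $|\beta|$ differentiations. Grouping those set partitions by the multiset of block types $\{\beta_1,\ldots,\beta_r\}$, the number of set partitions giving a fixed multiset of vectors is
\[\frac{\beta!}{\beta_1!\cdots\beta_r!}\cdot\frac{1}{i_1!\cdots i_s!},\]
where $i_1,\ldots,i_s$ are the multiplicities of the repeated blocks. Multiplying by $g^{(r)}(f)=(-1)^r r!\,f^{-r-1}$ gives exactly the stated coefficient
\[\frac{(-1)^r}{f^{r+1}}\frac{r!}{i_1!\cdots i_s!}\frac{\beta!}{\beta_1!\cdots\beta_r!},\]
where the sum runs over unordered decompositions $\beta=\beta_1+\cdots+\beta_r$ into nonzero vectors. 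Finally, each factor $\partial^{\beta_j}\bullet f$ can be expanded with the first part of the lemma if desired.

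\emph{Alternative route.} To avoid quoting Faà di Bruno, I would instead induct on $|\beta|$. Apply $\partial_i$ to the formula for $\beta$, using the quotient rule and the first part. Each term produces new terms in two ways: either $\partial_i$ falls on $f^{-r-1}$, which adds a new block ${\bf e}_i$, or it falls on one factor $\partial^{\beta_j}\bullet f$, which replaces the block $\beta_j$ by $\beta_j+{\bf e}_i$. One then checks that the coefficients recombine into those for $\beta+{\bf e}_i$.

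\emph{Main obstacle.} The hard step is the combinatorial count. Whichever route is taken, one must verify that the number of ordered set partitions of the $|\beta|$ derivatives realizing a given multiset of blocks, divided by the $r!$ orderings of the blocks, equals the coefficient above. This amounts to the multinomial identity for distributing labelled derivatives into blocks of prescribed types, corrected by the symmetry factor $i_1!\cdots i_s!$ for identical blocks. I would verify it by the direct count rather than by the induction.
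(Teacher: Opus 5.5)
Your proposal is correct, but your main route differs from the paper's. Your treatment of the first formula matches the paper, which only says it follows from $\partial_i\bullet f=\partial f/\partial x_i$; you also make explicit that terms with $\gamma\not\geq\beta$ vanish.

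For the second formula, the paper takes what you call your ``alternative route''. It inducts on $|\beta|$, applies $\partial_i$ to the formula for $\beta$, and then does lengthy bookkeeping on unordered vector partitions $P_r(\beta)$ and ordered tuples $V_r(\beta)$. It tracks the terms where $\partial_i$ creates a new block ${\bf e}_i$ (counted through $n_i(p)$) and the terms where some $\beta_j$ becomes $\beta_j+{\bf e}_i$ (through the maps $\Psi_j$), and verifies by a counting identity that these recombine into the coefficients for $\beta+{\bf e}_i$.

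Your main route does something different. You label the $|\beta|$ differentiations and quote the set-partition form of Faà di Bruno, which is valid here. Its inductive proof only needs the product rule and $\partial_i\bullet g^{(k)}(f)=g^{(k+1)}(f)\,(\partial_i\bullet f)$, and this holds in $\C({\bf x})$ for $g^{(k)}(t)=(-1)^k k!\,t^{-k-1}$. Everything then reduces to one count. For each $i$, distributing the $b_i$ labelled copies of $\partial_i$ into blocks of prescribed sizes gives $\beta!/(\beta_1!\cdots\beta_r!)$ ordered block sequences. Dividing by the $i_1!\cdots i_s!$ permutations of equal-type blocks gives the number of set partitions. This count is correct, and multiplying by $g^{(r)}(f)$ yields exactly the stated coefficient.

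What your approach buys:
\begin{itemize}
\item At the level of labelled set partitions the induction step is trivial: a new derivative either opens a singleton block or joins an existing one. All the combinatorics is isolated in one multinomial count.
\item The same argument gives $\partial^\beta\bullet f^\ell$ for any $\ell$, by replacing $(-1)^r r!\,f^{-r-1}$ with $\ell(\ell-1)\cdots(\ell-r+1)f^{\ell-r}$.
\end{itemize}
The paper's argument is self-contained and cites no external formula. The cost is heavy bookkeeping on multisets, which amounts to reproving Faà di Bruno in this special case.

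Two small remarks. You are right to require the $\beta_j$ to be nonzero, which the paper leaves implicit. In your last paragraph, the number of set partitions equals the coefficient divided by $(-1)^r r!$ (the factor coming from $g^{(r)}$), not the coefficient itself; your earlier paragraph states this correctly.
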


{\em Proof.} We provide a proof of this known result, due to a lack of a reference. The first claim of the Lemma is a straightforward consequence of
\[\partial_i\bullet f=\frac{\partial f}{\partial x_i}\]
for $1\leq i\leq n$.

\vspace{3mm}

Let us prove the second claim by (multidimensional) induction on $|\beta|$.

{\bf First step:} $|\beta|=1$. We have $\partial_i\bullet\frac{1}{f}=-\frac{1}{f^2}\partial_i\bullet f$ for $1\leq i\leq n$, which clearly fits with the formula we want to prove, for $\beta=e_i$. The first step of induction follows.

\vspace{2mm}

{\bf Induction hypothesis:} for any $\beta\in\N^n$ with $|\beta|\leq\ell$ the formula holds.

We have to prove it for $\beta'$ such that $|\beta'|=\ell+1$. Any such $\beta'$ is of the form $\beta'=\beta+e_i$ with $1\leq i\leq n$ and $|\beta|=\ell$, so we can apply induction hypothesis on $\beta$.

First we set some notations. For any $1\leq r\leq|\beta|$,
\[P_r(\beta)=\{\beta_1+\cdots+\beta_r\ |\ \beta_1,\ldots,\beta_r\in\N^n,\ \beta_1+\cdots+\beta_r=\beta\}\]
is the set of partitions of the vector $\beta$. Given a partition $p\in P_r(\beta)$, the sequence $i_1,\ldots,i_s$ of integers defined in the statement of this lemma, is denoted by
\[{\rm seq}(p)=(i_1,\ldots,i_s)\]
Moreover by $p!$ we mean $\beta_1!\cdots\beta_r!$. With this notations the induction hypothesis looks
\[\partial^\beta\bullet\frac{1}{f}=\sum_{r=1}^{|\beta|}\sum_{p\in P_r(\beta)}(-1)^r\frac{1}{f^{r+1}}\frac{r!}{{\rm seq}(p)!}\frac{\beta!}{p!}\prod_{j=1}^r\big(\partial^{\beta_j}\bullet f\big)\]

Then, for $1\leq i\leq n$,
\[\begin{array}{ll}
\partial^{\beta+e_i}\bullet\frac{1}{f} & = \displaystyle{\sum_{r=1}^{|\beta|}\sum_{p\in P_r(\beta)}(-1)^r\frac{r!}{{\rm seq}(p)!}\frac{\beta!}{p!}\ \partial^{e_i}\left(\frac{1}{f^{r+1}}\prod_{j=1}^r\big(\partial^{\beta_j}\bullet f\big)\right)}\\
\\
 & =\displaystyle{\sum_{r=1}^{|\beta|}\sum_{p\in P_r(\beta)}\frac{(-1)^{r+1}}{f^{r+2}}\frac{(r+1)!}{{\rm seq}(p)!}\frac{\beta!}{p!}\big(\partial^{e_i}\bullet f\big)\prod_{j=1}^r\big(\partial^{\beta_j}\bullet f\big)}+\\
 \\
 & +\displaystyle{\sum_{r=1}^{|\beta|}\sum_{p\in P_r(\beta)}\frac{(-1)^r}{f^{r+1}}\frac{r!}{{\rm seq}(p)!}\frac{\beta!}{p!}\sum_{j=1}^r\big(\partial^{\beta_1}\bullet f\big)\cdots\big(\partial^{\beta_j+e_i}\bullet f\big)\cdots\big(\partial^{\beta_r}\bullet f\big)}\\
\end{array}\]
Let us define, for any partition $p=\beta_1+\cdots+\beta_r\in P_r(\beta)$, the integer
\[n_i(p)={\rm card}\{1\leq j\leq r\ |\ \beta_j=e_i\}\]
The map
\[P_r(\beta)\ \stackrel{\phi_i}{\longrightarrow} \ P_{r+1}(\beta+e_i)\]
\[p\ \ \ \ \mapsto\ \ \ \ p+e_i\]
is injective and
\[\phi_i(P_r(\beta))=\{p\in P_{r+1}(\beta+e_i)\ |\ n_i(p)>0\}\]
Moreover
\[(p+e_i)!=p!\]
and
\[\begin{array}{ll}
{\rm seq}(p+e_i) & =(n_i(p)+1){\rm seq}(p)!\\
 & =n_i(p+e_i){\rm seq}(p)!\\
 \end{array}\]
Then
\[\begin{array}{ll}
\partial^{\beta+e_i}\bullet\frac{1}{f} & =\displaystyle{\sum_{r=1}^{|\beta|}\sum_{p\in P_{r+1}(\beta+e_i)}\frac{(-1)^{r+1}}{f^{r+2}}\frac{(r+1)!}{{\rm seq}(p)!}\frac{\beta!}{p!}n_i(p)\prod_{j=1}^{r+1}\big(\partial^{\beta_j}\bullet f\big)}+\\
\\
 & +\displaystyle{\sum_{r=1}^{|\beta|}\sum_{p\in P_r(\beta)}\frac{(-1)^r}{f^{r+1}}\frac{r!}{{\rm seq}(p)!}\frac{\beta!}{p!}\sum_{j=1}^r\big(\partial^{\beta_1}\bullet f\big)\cdots\big(\partial^{\beta_j+e_i}\bullet f\big)\cdots\big(\partial^{\beta_r}\bullet f\big)}\\
\\
 & = \displaystyle{\sum_{r=2}^{|\beta|+1}\sum_{p\in P_r(\beta+e_i)}\frac{(-1)^r}{f^{r+1}}\frac{r!}{{\rm seq}(p)!}\frac{\beta!}{p!}n_i(p)\prod_{j=1}^r\big(\partial^{\beta_j}\bullet f\big)}+\\
\\
 & +\displaystyle{\sum_{r=1}^{|\beta|}\sum_{p\in P_r(\beta)}\frac{(-1)^r}{f^{r+1}}\frac{r!}{{\rm seq}(p)!}\frac{\beta!}{p!}\sum_{j=1}^r\big(\partial^{\beta_1}\bullet f\big)\cdots\big(\partial^{\beta_j+e_i}\bullet f\big)\cdots\big(\partial^{\beta_r}\bullet f\big)}\\
 \end{array}\]

In the first sum the term corresponding to $r=|\beta|+1$ is
\begin{equation}
\frac{(-1)^{|\beta|+1}}{f^{|\beta|+2}}(|\beta|+1)!\big(\partial^{e_1}\bullet f\big)^{b_1}\cdots\big(\partial^{e_i}\bullet f\big)^{b_i+1}\cdots\big(\partial^{e_n}\bullet f\big)^{b_n},
\label{eq(beta+1)}
\end{equation}
where $\beta=(b_1,\ldots,b_n)$, since there is only one partition of length $|\beta|+1$, precisely
\[P_{|\beta|+1}(\beta+e_i)=\{e_1+\stackrel{b_1)}{\cdots}+e_1+\cdots+e_i+\stackrel{b_i+1)}{\cdots}+e_i+\cdots+e_n+\stackrel{b_n)}{\cdots}+e_n\}\]
and hence seq$(p)!=(\beta+e_i)!$, $p!=1$ and $n_i(p)=b_i+1$.

Since in the second sum there is no partition of $\beta+e_i$ of length $|\beta|+1$, we conclude that  the contribution of the partition $p=e_1+\stackrel{b_1)}{\cdots}+e_1+\cdots+e_i+\stackrel{b_i+1)}{\cdots}+e_i+\cdots+e_n+\stackrel{b_n)}{\cdots}+e_n$ is given by (\ref{eq(beta+1)}), which fits with the formula we want to prove, for $r=|\beta|+1$.

Moreover notice that in the first sum there are no partitions of length one. Therefore the contribution of $r=1$ is in the second sum, and it is exactly
\[(-1)\frac{1}{f^2}\big(\partial^{\beta+e_i}\bullet f\big),\]
since $P_1(\beta)=\{\beta\}$, seq$(p)!=1$ and $p!=\beta!$. Therefore
\[\begin{array}{ll}
\partial^{\beta+e_i}\bullet\frac{1}{f} & =-\frac{1}{f^2}\partial^{\beta+e_i}\bullet f+\frac{(-1)^{|\beta|+1}}{f^{|\beta|+2}}(|\beta|+1)!\big(\partial^{e_1}\bullet f\big)^{b_1}\cdots\big(\partial^{e_i}\bullet f\big)^{b_i+1}\cdots\big(\partial^{e_n}\bullet f\big)^{b_n}\\
\\
 & +\displaystyle{\sum_{r=2}^{|\beta|}\sum_{p\in P_r(\beta+e_i)}\frac{(-1)^r}{f^{r+1}}\frac{r!}{{\rm seq}(p)!}\frac{\beta!}{p!}n_i(p)\prod_{j=1}^r\big(\partial^{\beta_j}\bullet f\big)}+\\
\\
 & \displaystyle{\sum_{r=2}^{|\beta|}\sum_{p\in P_r(\beta)}\frac{(-1)^r}{f^{r+1}}\frac{r!}{{\rm seq}(p)!}\frac{\beta!}{p!}\sum_{j=1}^r\big(\partial^{\beta_1}\bullet f\big)\cdots\big(\partial^{\beta_j+e_i}\bullet f\big)\cdots\big(\partial^{\beta_r}\bullet f\big)}\\
\end{array}\]

Before continuing with the proof we will write the sums in other way. Let us define the set of vectors
\[V_r(\beta)=\{(\beta_1,\ldots,\beta_r)\in(\N^n)^r\ |\ \beta_1+\cdots+\beta_r=\beta\}\]
Then
\begin{equation}
\sum_{p\in V_r(\beta)}C_p\prod_{j=1}^r\big(\partial^{\beta_j}\bullet f\big)=\sum_{p\in P_r(\beta)}C_p\frac{r!}{{\rm seq}(p)!}\prod_{j=1}^r\big(\partial^{\beta_j}\bullet f\big)
\label{truco}
\end{equation}
whenever $C_p(p\in V_r(\beta))=C_p(p\in P_r(\beta))$, i.e. the coefficient $C_p$ is the same regardless $p\in V_r(\beta)$ or $p\in P_r(\beta)$. This is clearly our case since $C_p=\frac{\beta!}{p!}$. Hence, so far we have the equality
\[\begin{array}{ll}
\partial^{\beta+e_i}\bullet\frac{1}{f} & =-\frac{1}{f^2}\big(\partial^{\beta+e_i}\bullet f\big)+\frac{(-1)^{|\beta|+1}}{f^{|\beta|+2}}(|\beta|+1)!\big(\partial^{e_1}\bullet f\big)^{b_1}\cdots\big(\partial^{e_i}\bullet f\big)^{b_i+1}\cdots\big(\partial^{e_n}\bullet f\big)^{b_n}\\
\\
 & +\displaystyle{\sum_{r=2}^{|\beta|}\sum_{p\in V_r(\beta+e_i)}\frac{(-1)^r}{f^{r+1}}\frac{\beta!}{p!}n_i(p)\prod_{j=1}^r\big(\partial^{\beta_j}\bullet f\big)}+\\
\\
 & +\displaystyle{\sum_{r=2}^{|\beta|}\sum_{p\in V_r(\beta)}\frac{(-1)^r}{f^{r+1}}\frac{\beta!}{p!}\sum_{j=1}^r\big(\partial^{\beta_1}\bullet f\big)\cdots\big(\partial^{\beta_j+e_i}\bullet f\big)\cdots\big(\partial^{\beta_r}\bullet f\big)}\\
\end{array}\]

Notice that $p=(\beta_1,\ldots,\beta_r)\in V_r(\beta+e_i)$ with $n_i(p)<b_i+1$ if and only if there exists $1\leq j\leq r$ such that $\beta_j=v+e_i$ with $v\in\N^n$. Hence in the second sum we are in fact summing over the vectors in $V_r(\beta+e_i)$ with $n_i(p)<b_i+1$, and therefore the partitions in $V_r(\beta+e_i)$ with $n_i(p)=b_i+1$ only appear in the first sum. Then
\[\begin{array}{ll}
\partial^{\beta+e_i}\bullet\frac{1}{f} & =-\frac{1}{f^2}\partial^{\beta+e_i}\bullet f+\frac{(-1)^{|\beta|+1}}{f^{|\beta|+2}}(|\beta|+1)!\big(\partial^{e_1}\bullet f\big)^{b_1}\cdots\big(\partial^{e_i}\bullet f\big)^{b_i+1}\cdots\big(\partial^{e_n}\bullet f\big)^{b_n}\\
\\
 & +\displaystyle{\sum_{r=2}^{|\beta|}\ \sum_{\stackrel{p\in V_r(\beta+e_i)}{n_i(p)=b_i+1}}\frac{(-1)^r}{f^{r+1}}\frac{(\beta+e_i)!}{p!}\prod_{j=1}^r\big(\partial^{\beta_j}\bullet f\big)}+\\
\\
 & +\displaystyle{\sum_{r=2}^{|\beta|}\ \sum_{\stackrel{p\in V_r(\beta+e_i)}{ n_i(p)\leq b_i}}\frac{(-1)^r}{f^{r+1}}\frac{\beta!}{p!}n_i(p)\prod_{j=1}^r\big(\partial^{\beta_j}\bullet f\big)}+\\
\\
 & +\displaystyle{\sum_{r=2}^{|\beta|}\sum_{p\in V_r(\beta)}\frac{(-1)^r}{f^{r+1}}\frac{\beta!}{p!}\sum_{j=1}^r\big(\partial^{\beta_1}\bullet f\big)\cdots\big(\partial^{\beta_j+e_i}\bullet f\big)\cdots\big(\partial^{\beta_r}\bullet f\big)}\\
\end{array}\]

We claim that
\[\begin{array}{c}
 \displaystyle{\sum_{r=2}^{|\beta|}\ \sum_{\stackrel{p\in V_r(\beta+e_i)}{ n_i(p)\leq b_i}}\frac{(-1)^r}{f^{r+1}}\frac{\beta!}{p!}n_i(p)\prod_{j=1}^r\big(\partial^{\beta_j}\bullet f\big)}+\\
  \displaystyle{\sum_{r=2}^{|\beta|}\sum_{p\in V_r(\beta)}\frac{(-1)^r}{f^{r+1}}\frac{\beta!}{p!}\sum_{j=1}^r\big(\partial^{\beta_1}\bullet f\big)\cdots\big(\partial^{\beta_j+e_i}\bullet f\big)\cdots\big(\partial^{\beta_r}\bullet f\big)}=\\
\\
=\displaystyle{\sum_{r=2}^{|\beta|}\sum_{\stackrel{p\in V_r(\beta+e_i)}{n_i(p)\leq b_i}}\frac{(-1)^r}{f^{r+1}}\frac{(\beta+e_i)!}{p!}\prod_{j=1}^r\big(\partial^{\beta_j}\bullet f\big)}\\
\end{array}\]
and, using again (\ref{truco}) we recover the formula that we wanted to prove. Let us the prove the claim. Recall that the second sum
\[S_2=\displaystyle{\sum_{r=2}^{|\beta|}\sum_{p\in V_r(\beta)}\frac{(-1)^r}{f^{r+1}}\frac{\beta!}{p!}\sum_{j=1}^r\big(\partial^{\beta_1}\bullet f\big)\cdots\big(\partial^{\beta_j+e_i}\bullet f\big)\cdots\big(\partial^{\beta_r}\bullet f\big)}\]
runs in fact over the vectors $p\in V_r(\beta+e_i)$ with $n_i(p)\leq b_i$ and there are many repetitions, while in the first sum there are no repetitions. Let us work out the second sum $S_2$. We can write
\[S_2=\displaystyle{\sum_{r=2}^{|\beta|}\sum_{j=1}^r\sum_{\stackrel{p\in V_r(\beta+e_i)}{\beta_j>e_i}}\frac{(-1)^r}{f^{r+1}}\frac{\beta!}{p!}b_i^{(j)}\prod_{k=1}^r\big(\partial^{\beta_k}\bullet f\big)}\]
where we denote $\beta_j=(b_1^{(j)},\ldots,\beta_n^{(j)})$. This follows because, if we fix $j$, the map
\[\Psi_j\ : V_r(\beta)\ \longrightarrow\ V_r(\beta+e_i)\]
\[(\beta_1,\ldots,\beta_r)\ \mapsto\ (\beta_1,\ldots,\beta_j+e_i,\ldots,\beta_r)\]
is injective, which implies that there are no repetitions in the set
\[\{(\beta_1,\ldots,\beta_j+e_i,\ldots,\beta_r)\ |\ (\beta_1,\ldots,\beta_r)\in V_r(\beta)\}\]
Moreover $\Psi_j(V_r(\beta))=\{p\in V_r(\beta+e_i)\ |\ \beta_j>e_i\}$ and for every $p\in V_r(\beta)$ we have $\Psi_j(p)!=(b_i^{(j)}+1)p!$, or in other words, for every $p\in V_r(\beta+e_i)$ with $\beta>e_i$ we have $p!=b_i^{(j)}\Psi_j^{-1}(p)!$.

Now the question is whether there are repetitions in the set
\[\displaystyle{\bigcup_{j=1}^r\{p\in V_r(\beta+e_i)\ |\ \beta_j>e_i\}}\]
The answer is clearly yes since
\[\Psi_{j_1}(V_r(\beta))\cap\Psi_{j_2}(V_r(\beta))\neq\emptyset.\]
It is enough to notice that each $p=(\beta_1,\ldots,\beta_r)\in\cup_{j=1}^r\Psi_j(V_r(\beta))$ appears in exactly every $\Psi_j(V_r(\beta))$ for $1\leq j\leq r$ such that $\beta_j>e_i$. Hence
\[S_2=\displaystyle{\sum_{r=2}^{|\beta|}\sum_{p\in V_r(\beta+e_i)}\frac{(-1)^r}{f^{r+1}}\frac{\beta!}{p!}\sum_{\stackrel{1\leq j\leq r}{\beta_j>e_i}}b_i^{(j)}\prod_{j=1}^r\big(\partial^{\beta_j}\bullet f\big)}\]
Finally the claim follows when we notice that for every $p\in V_r(\beta+e_i)$
\[n_i(p)+\sum_{\stackrel{1\leq j\leq r}{\beta_j>e_i}}b_i^{(j)}=\sum_{\stackrel{1\leq j\leq r}{\beta_j\geq e_i}}b_i^{(j)}=b_i+1\]
\hfill$\Box$

\vspace{3mm}

The goal of the section is to prove the following result.

\begin{thm}
Let $f\in\C[x_1,\ldots,x_n]$ and $\ell\in\Z$. For any $\omega\in\R^n\setminus\{0\}$,
\begin{enumerate}
\item[(i)] $\ell{\rm ord}_f(\omega)$ is a root of $b_{{\rm Ann}(f^\ell),\omega}(s)$

\item[(ii)] If $\ell\in\N$, then
\[b_{{\rm Ann}(f^\ell),\omega}(s)=s-\ell{\rm ord}_f(\omega)\]
\end{enumerate}
\label{uufff}
\end{thm}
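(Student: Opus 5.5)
The plan is to compare $\mathrm{in}_{(-\omega,\omega)}(P)$ with the initial part of $f^\ell$ through a one-parameter deformation. Fix $\omega$, put $m={\rm ord}_f(\omega)$, and let $\tau=\tau_\omega$, so $f=f_\tau+(\text{terms of }\omega\text{-weight}>m)$ and $f_\tau$ is $\omega$-quasi-homogeneous of weight $m$. By Lemma~\ref{ell} the corresponding data for $f^\ell$ with $\ell\in\N$ are $\ell m$ and $(f^\ell)_{\ell\tau}=f_\tau^\ell$. For $\ell<0$ I will use $f^\ell=f_\tau^\ell(1+h/f_\tau)^\ell$ as a formal expansion whose terms have strictly increasing weight.

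\textbf{Step 1: initial forms annihilate $f_\tau^\ell$.} First assume $\omega$ is rational. Since initial ideals are constant on relative interiors of cones of the small Gröbner fan and $\Sigma_f$ is a rational fan, I expect the general case to follow by perturbing $\omega$ inside its cone; I will check this carefully. Clearing denominators, assume $\omega\in\Z^n$ and substitute $x_i\mapsto t^{\omega_i}x_i$. Then
\[
f(t^\omega x)=t^m\big(f_\tau(x)+O(t)\big),\qquad f^\ell(t^\omega x)=t^{\ell m}\big(f_\tau^\ell+O(t)\big)
\]
in $\C(x)[[t]]$ for every $\ell\in\Z$. For the negative case I will invoke Lemma~\ref{Tec} to see that all derivatives of $1/f$ respect this expansion. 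A monomial $x^\alpha\partial^\beta$ is rescaled by $t^{\omega\cdot\alpha-\omega\cdot\beta}$. Hence for $P\in{\rm Ann}(f^\ell)$ the lowest power of $t$ in $P\bullet f^\ell=0$ is exactly ${\rm in}_{(-\omega,\omega)}(P)\bullet f_\tau^\ell$, so this vanishes. This gives ${\rm in}_{(-\omega,\omega)}({\rm Ann}(f^\ell))\subseteq{\rm Ann}(f_\tau^\ell)$.

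\textbf{Step 2: part (i).} With $s=\sum\omega_ix_i\partial_i$ the Euler relation gives $s\bullet f_\tau^\ell=\ell m\,f_\tau^\ell$, hence $b(s)\bullet f_\tau^\ell=b(\ell m)f_\tau^\ell$ for every polynomial $b$. Applying this to $b=b_{{\rm Ann}(f^\ell),\omega}$, Step 1 forces $b(\ell m)=0$. This also shows the b-function is not $1$, so in$_{(-\omega,\omega)}({\rm Ann}(f^\ell))$ is proper.

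\textbf{Step 3: part (ii).} By (i), for $\ell\in\N$ it suffices to exhibit $P\in{\rm Ann}(g)$, with $g=f^\ell$ a polynomial, such that ${\rm in}_{(-\omega,\omega)}(P)=s-\ell m$. Write $g=g_0+g_1+\cdots$ by increasing weights $\ell m=m_0<m_1<\cdots$, and set
\[
h=(s-m_0)\bullet g=\sum_{j\ge 1}(m_j-m_0)g_j .
\]
I will look for $Q$ with $\deg_{(-\omega,\omega)}(Q)<0$ and $Q\bullet g=h$; then $P=s-m_0-Q$ works. Solvability without the weight condition is clear, since $D_n\cdot g=\C[x]$ because $\C[x]$ is a simple $D_n$-module. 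The natural candidate is $Q=c^{-1}h\,\partial^\gamma$, with $\gamma\in{\rm Supp}(g_0)$ chosen so that $\partial^\gamma\bullet g_0=c$ is a nonzero constant. Its weight is $\omega\cdot\gamma-\omega\cdot\alpha<0$ for $\alpha\in{\rm Supp}(h)$, and it reproduces $h$ up to the error $c^{-1}h\,(\partial^\gamma\bullet(g-g_0))$, which has strictly higher $\omega$-weight. I then iterate this correction.

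An alternative route shows directly that in$_{(-\omega,\omega)}({\rm Ann}(g))={\rm Ann}(g_0)$, using that both quotients are isomorphic to the simple module $\C[x]$; then $s-m_0\in{\rm Ann}(g_0)$ finishes the proof. Note that $P=g\,s-(s\bullet g)\in{\rm Ann}(g)$ already has initial form $g_0(s-m_0)$, which is the right operator up to the factor $g_0$.

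\textbf{Main obstacle.} The hard step is the termination and weight control of the iteration in Step 3. When $\omega$ has negative entries, the $\omega$-weights of polynomials are not bounded below by degree, so the corrections need not stop. I would try first to choose $\gamma$ maximal for the componentwise order among ${\rm Supp}(g)$ on a suitable face, so that $\partial^\gamma\bullet g$ is constant. Failing that, I would prove the equality of initial ideals with ${\rm Ann}(g_0)$ via a Hilbert-function or simplicity comparison.
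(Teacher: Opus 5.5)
Your proof of (i) is correct and takes a different route from the paper. The $t$-scaling in Step~1 gives ${\rm in}_{(-\omega,\omega)}({\rm Ann}(f^\ell))\subseteq{\rm Ann}(f_\tau^\ell)$ for all $\ell\in\Z$ at once. This replaces the paper's computational Lemma~\ref{uy}, which the paper uses only for $\ell<0$ and deduces $\ell\geq 0$ from (ii). Step~2 also proves that the initial ideal is proper, which the paper takes as a standing assumption. For irrational $\omega$ you can perturb one operator at a time: the $\omega'$ that induce the same comparisons among the finitely many exponents of $P$ and of $f$ form a relatively open rational cone containing $\omega$. The reduction in Step~3 is also exactly the paper's (Lemma~\ref{lema0} with $v=\omega$).

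The gap is the step you flag yourself: you never construct $Q\in D_n$ of negative $(-\omega,\omega)$-weight with $Q\bullet g=h$.
\begin{itemize}
\item With $\gamma$ fixed, your iteration leaves errors $\pm c^{-k}h\,r^k$, where $r=\partial^\gamma\bullet(g-g_0)$. These vanish only if $r=0$, and the formal limit is $h\,(\partial^\gamma\bullet g)^{-1}\partial^\gamma\notin D_n$.
\item The fallback of taking $\gamma\in{\rm Supp}(g_0)$ maximal in all of ${\rm Supp}(g)$ is not available in general. For $n=1$, $g=x+x^2$, $\omega=1$, the only candidate is $\gamma=1<2$, and $\partial\bullet g=1+2x$.
\item Simplicity of $\C[x]$ only gives a surjection $D_n/{\rm in}_{(-\omega,\omega)}({\rm Ann}(g))\to D_n/{\rm Ann}(g_0)$; its injectivity is the whole content.
\end{itemize}

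The missing idea is Lemma~\ref{lemcte}: correct the operator $\partial^\gamma$, not the right-hand side. Write $g=\sum c_\gamma{\bf x}^\gamma$ and define recursively, downwards on the finite poset ${\rm Supp}(g)$,
\[P_\gamma=\partial^\gamma-\sum_{\gamma<\gamma'\in{\rm Supp}(g)}\frac{1}{(\gamma'-\gamma)!}\,{\bf x}^{\gamma'-\gamma}P_{\gamma'}.\]
The subtracted terms cancel exactly the tail of $\partial^\gamma\bullet g$ from Lemma~\ref{Tec}, so $P_\gamma\bullet g=c_\gamma\gamma!$. Each term ${\bf x}^{\gamma'-\gamma}P_{\gamma'}$ has weight $-\langle\omega,\gamma'-\gamma\rangle+\langle\omega,\gamma'\rangle=\langle\omega,\gamma\rangle$, so $P_\gamma$ is $(-\omega,\omega)$-homogeneous of degree $\langle\omega,\gamma\rangle$ for every $\omega$.

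Now take $\gamma\in{\rm Supp}(g_0)$ and $Q=(c_\gamma\gamma!)^{-1}h\,P_\gamma$. Then $Q\bullet g=h$ exactly, and each monomial of $Q$ has weight $\ell m-\langle\omega,\alpha\rangle<0$ for some $\alpha\in{\rm Supp}(h)$. Hence $s-\ell m-Q\in{\rm Ann}(f^\ell)$ has initial form $s-\ell m$. For $g=x+x^2$ this gives $P_1=\partial-x\partial^2$ and the annihilating operator $x\partial-1-x^2\partial+x^3\partial^2$.

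The same device is also what would prove your alternative claim ${\rm in}_{(-\omega,\omega)}({\rm Ann}(g))={\rm Ann}(g_0)$. Without it, neither of your routes closes.
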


In \cite{OTT}  the authors find an upper-bound for the degree of polynomial solutions to the system of differential equations defined by a holonomic ideal $I$, by looking at the smallest integer root of the b-function with respect to weights $\omega$ such that $\omega_i<0$ for $1\leq i\leq n$. Notice that this is not ${\rm ord}_f(\omega)$.

\vspace{3mm}

\begin{rem}
Notice that, if $\ell=0$ and $f\in\C[x_1,\ldots,x_n]$ is any polynomial, then
\[{\rm Ann}(f^\ell)={\rm Ann}(1)=\big(\partial_1,\ldots,\partial_n\big)\]
Then for any $\omega\in\R^n\setminus\{0\}$
\[{\rm in}_{(-\omega,\omega)}\big({\rm Ann}(1)\big)=\big(\partial_1,\ldots,\partial_n\big)\]
Hence $NF\big(\omega_1x_1\partial_1+\cdots+\omega_nx_n\partial_n,\{\partial_1,\ldots,\partial_n\}\big)=0$ and therefore we trivially deduce that
\[b_{{\rm Ann}(1),\omega}(s)=s\]
\label{rem0}
\end{rem}

\vspace{2mm}

\begin{lem}
For every $\gamma\in{\rm Supp}(f)$ there exists an operator $P_\gamma\in D_n$ such that
\[P_\gamma\bullet f=c_\gamma\gamma!\]
\label{lemcte}
\end{lem}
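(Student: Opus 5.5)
The natural candidate is $P_\gamma=\partial^\gamma$ composed with a suitable polynomial correction. By Lemma \ref{Tec},
\[\partial^\gamma\bullet f=\sum_{\delta\geq\gamma}c_\delta\frac{\delta!}{(\delta-\gamma)!}{\bf x}^{\delta-\gamma}=c_\gamma\gamma!+\sum_{\delta>\gamma,\ \delta\in{\rm Supp}(f)}c_\delta\frac{\delta!}{(\delta-\gamma)!}{\bf x}^{\delta-\gamma}.\]
The constant term is exactly $c_\gamma\gamma!$. The task is therefore to kill the remaining non-constant monomials ${\bf x}^{\delta-\gamma}$ with $\delta>\gamma$ componentwise. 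The plan is to do this by evaluating at the origin, realised as an operator.

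The key step is to use the operator that projects a polynomial onto its constant term. Let $N=\deg(\partial^\gamma\bullet f)$. Then consider the Euler operators $\theta_i=x_i\partial_i$ and $\theta=\theta_1+\cdots+\theta_n$. On a monomial ${\bf x}^\mu$ the operator $\theta$ acts by the scalar $|\mu|$. Hence
\[Q=\prod_{k=1}^{N}\Big(1-\frac{\theta}{k}\Big)\]
acts on ${\bf x}^\mu$ by $\prod_{k=1}^N(1-|\mu|/k)$. This equals $1$ if $\mu=0$ and $0$ if $1\leq|\mu|\leq N$. So $Q$ kills every non-constant monomial of degree at most $N$ and fixes constants.

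Setting $P_\gamma=Q\,\partial^\gamma\in D_n$ then gives
\[P_\gamma\bullet f=Q\bullet(\partial^\gamma\bullet f)=c_\gamma\gamma!,\]
as required.

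Steps in order:
\begin{enumerate}
\item Compute $\partial^\gamma\bullet f$ via the first part of Lemma \ref{Tec} and isolate the constant term. The constant term comes only from $\delta=\gamma$, because $\delta-\gamma=0$ forces $\delta=\gamma$.
\item Observe that the other terms are monomials of positive total degree bounded by $N$.
\item Construct $Q$ as above and check its action on monomials. This uses that $\theta$ is diagonal on monomials.
\end{enumerate}

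There is no serious obstacle. The only point needing care is that the Euler operator eigenvalue is the total degree. This makes a single polynomial in $\theta$ suffice, so no separate handling of each coordinate is needed.

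A simpler alternative is to use instead $Q'=\prod_i\prod_{k=1}^{N}(1-\theta_i/k)$. On ${\bf x}^\mu$ this acts by $\prod_i\prod_{k=1}^N(1-\mu_i/k)$, which vanishes unless $\mu=0$; either choice works.
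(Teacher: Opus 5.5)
Your proof is correct, and it builds the operator differently from the paper. The paper works by iterated cancellation along chains $\gamma<\gamma_1<\cdots<\gamma_r$ in ${\rm Supp}(f)$. It subtracts $\frac{1}{(\gamma_1-\gamma)!}{\bf x}^{\gamma_1-\gamma}\partial^{\gamma_1}$ to kill each non-constant term of $\partial^\gamma\bullet f$, then corrects the new terms this creates, and so on. The result is an explicit alternating sum over chains. The printed sign there should be $(-1)^r$ rather than $(-1)^{r-1}$, as $f=x+x^2$, $\gamma=1$ shows.

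You instead compose $\partial^\gamma$ once with an interpolation polynomial in the Euler operator that projects onto constants. This avoids all the chain bookkeeping. It also depends on $f$ only through the degree bound $N$: your $Q\partial^\gamma$ extracts $\gamma!$ times the coefficient of ${\bf x}^\gamma$ from any polynomial of degree at most $N+|\gamma|$.

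The two operators have the same shape. Since ${\bf x}^{\delta-\gamma}\partial^{\delta}={\bf x}^{\delta-\gamma}\partial^{\delta-\gamma}\partial^\gamma$ and ${\bf x}^\beta\partial^\beta=\prod_i\theta_i(\theta_i-1)\cdots(\theta_i-\beta_i+1)$, the paper's $P_\gamma$ is also of the form $R(\theta_1,\ldots,\theta_n)\partial^\gamma$ with $R(0)=1$. The paper's $R$ is tailored to vanish at the exponents $\delta-\gamma$ with $\gamma<\delta\in{\rm Supp}(f)$. Yours is a universal $R$ vanishing at all nonzero exponents of total degree at most $N$.

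Add one line to your write-up. Each $\theta_i$ has $(-\omega,\omega)$-weight zero, so your $Q\partial^\gamma$ is $(-\omega,\omega)$-homogeneous of degree $\langle\gamma,\omega\rangle$ for every $\omega$. Hence it satisfies Remark~\ref{remhom}, which is the property the paper actually relies on later in Lemma~\ref{lema0}.
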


{\em Proof.}
 For any $\gamma\in{\rm Supp}(f)$ we have (see Lemma \ref{Tec})
 \[\partial^{\gamma}\bullet f=c_\gamma\gamma!+\sum_{\gamma<\gamma_1\in{\rm Supp}(f)}c_{\gamma_1}\frac{\gamma_1!}{(\gamma_1-\gamma)!}{\mathbf x}^{\gamma_1-\gamma}\]
If there does not exist $\gamma_1\in{\rm Supp}(f)$ such that $\gamma<\gamma_1$ then we set $P_\gamma=\partial^{\gamma}$. Otherwise
we use the analogous expression for $\partial^{\gamma_1}\bullet f$ and deduce
\[\partial^{\gamma}\bullet f=c_\gamma\gamma!+\sum_{\gamma<\gamma_1}\frac{1}{(\gamma_1-\gamma)!}{\mathbf x}^{\gamma_1-\gamma}\partial^{\gamma_1}\bullet f-\sum_{\gamma<\gamma_1<\gamma_2}\frac{1}{(\gamma_1-\gamma)!}c_{\gamma_2}\frac{\gamma_2!}{(\gamma_2-\gamma_1)!}{\mathbf x}^{\gamma_2-\gamma}\]
Continuing this way, the process is finite, since $f$ is a polynomial, and for those $\gamma$ which are maximal in Supp$(f)$ with respect to $\leq$, we have that $\partial^\gamma(f)=c_\gamma\gamma!$. Hence, denoting $\gamma_0=\gamma$, we find an operator in $D_n$
\[P_\gamma:=\partial^\gamma+\sum_{r=1}^\infty(-1)^{r-1}\sum_{\gamma<\gamma_1<\cdots<\gamma_r}\Big(\prod_{k=1}^r\frac{1}{(\gamma_k-\gamma_{k-1})!}\Big){\mathbf x}^{\gamma_r-\gamma}\partial^{\gamma_r}\]
with the property $P_\gamma\bullet f=c_\gamma\gamma!$. Notice that the sum in $r$ is finite since $\gamma,\gamma_1,\ldots,\gamma_r$ all belong to Supp$(f)$.
\hfill$\Box$

\vspace{3mm}

\begin{rem}
For any $\omega\in\R^n$, the operator $P_\gamma$ just defined is $(-\omega,\omega)$-homogeneous of degree $\langle\gamma,\omega\rangle$.
\label{remhom}
\end{rem}

\begin{lem}
Let $\ell\in\N$ and let $\tau$ be any face of $\Gamma(f)$. For any $1\leq i\leq n$ and any $\omega\in \stackrel{\circ}{C_\tau}$
\[\partial_i^{\ell\lambda_i+1}\in{\rm in}_{(-\omega,\omega)}\big({\rm Ann}(f^\ell)\big)\]
where $\lambda_i={\rm max}\ \{\gamma_i\ |\ \gamma\in\tau\cap{\rm Supp}(f)\}$. Moreover, for any $v\in V_\tau$
\[\sum_{j=1}^nv_jx_j\partial_j-\ell\langle v,\gamma_\tau\rangle\in{\rm in}_{(-\omega,\omega)}\big({\rm Ann}(f^\ell)\big)\]
for any $\gamma_\tau\in\tau$.
\label{lema0}
\end{lem}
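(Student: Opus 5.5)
The plan is to produce, for each claimed element, an explicit operator in ${\rm Ann}(f^\ell)$ whose $(-\omega,\omega)$-initial form is exactly that element. The correction terms will be built from the constant-producing operators of Lemma \ref{lemcte}, applied to $f^\ell$ instead of $f$. Since $\ell\in\N$, $h:=f^\ell$ is a polynomial. If $\ell=0$ everything is immediate from Remark \ref{rem0}, so assume $\ell\geq 1$.

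\textbf{Step 1: weight decomposition of $h$.} Fix $\omega\in\stackrel{\circ}{C_\tau}$ and decompose $h=\sum_{k\geq 0}h_k$ into $\omega$-homogeneous pieces, where $h_k$ collects the monomials ${\bf x}^\delta$ with $\langle\omega,\delta\rangle=a_k$ and $a_0<a_1<\cdots$. By Lemma \ref{ell} and the choice of $\omega$, we have $a_0=\ell\,{\rm ord}_f(\omega)$ and $h_0=f_\tau^\ell$. Every monomial of $f_\tau^\ell$ has exponent equal to a sum of $\ell$ points of $\tau\cap{\rm Supp}(f)$. Moreover, the vertices $\ell\gamma$ of $\ell\tau$ (with $\gamma$ a vertex of $\tau$) lie in ${\rm Supp}(f^\ell)$, since vertex coefficients of a power do not cancel.

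Fix such a vertex $\gamma_0=\ell\gamma$ with coefficient $c\neq 0$. Lemma \ref{lemcte}, applied to $h$, gives $P_{\gamma_0}$ with $P_{\gamma_0}\bullet h=c\,\gamma_0!$. By Remark \ref{remhom}, $P_{\gamma_0}$ is $(-\omega,\omega)$-homogeneous of degree $\langle\gamma_0,\omega\rangle=a_0$.

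\textbf{Step 2: the general correction.} Suppose $Q$ is $(-\omega,\omega)$-homogeneous of degree $d$, satisfies $Q\bullet h_0=0$, and has the property that $g:=Q\bullet h=\sum_{k\geq1}Q\bullet h_k$ only involves monomials ${\bf x}^\delta$ with $\langle\omega,\delta\rangle=a_k-d$ for some $k\geq 1$. Set
\[R:=\frac{1}{c\,\gamma_0!}\,g\cdot P_{\gamma_0}.\]
Then $R\bullet h=g$, so $Q-R\in{\rm Ann}(f^\ell)$. Each term ${\bf x}^\delta P_{\gamma_0}$ of $R$ has weight $-\langle\omega,\delta\rangle+a_0=d-(a_k-a_0)<d$. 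Hence ${\rm in}_{(-\omega,\omega)}(Q-R)=Q$, and $Q$ lies in the initial ideal.

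\textbf{Step 3: the two choices of $Q$.}

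For the first claim, take $Q=\partial_i^{\ell\lambda_i+1}$, of degree $(\ell\lambda_i+1)\omega_i$. Every monomial of $f_\tau^\ell$ has $x_i$-exponent at most $\ell\lambda_i$, so $Q\bullet h_0=0$. Also, $\partial_i^m$ maps an $\omega$-homogeneous polynomial of weight $a_k$ to one of weight $a_k-m\omega_i$, which is the required form.

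For the second claim, take $Q=\theta_v-\ell\langle v,\gamma_\tau\rangle$ with $\theta_v=\sum_jv_jx_j\partial_j$. This operator is homogeneous of degree $0$ and acts on ${\bf x}^\delta$ by multiplication by $\langle v,\delta\rangle-\ell\langle v,\gamma_\tau\rangle$. Since $v\in V_\tau$, the linear form $\langle v,\cdot\rangle$ is constant on $\tau\cap{\rm Supp}(f)$, hence on its affine span, which contains $\tau$ and therefore $\gamma_\tau$. So for every exponent $\delta$ of $f_\tau^\ell$, being a sum of $\ell$ such points, we get $\langle v,\delta\rangle=\ell\langle v,\gamma_\tau\rangle$, and $Q\bullet h_0=0$. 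The operator $Q$ preserves $\omega$-weights, so $g$ has the required form and Step 2 applies.

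\textbf{Expected main obstacle.} The delicate point is Step 1. One must justify carefully that the lowest $\omega$-part of $f^\ell$ is exactly $f_\tau^\ell$, which uses $\omega\in\stackrel{\circ}{C_\tau}$ so that ${\rm ord}_f(\omega)$ is attained exactly on $\tau$. One must also check that some exponent $\gamma_0$ of weight $a_0$ survives in ${\rm Supp}(f^\ell)$; the vertex argument via Lemma \ref{ell} handles this. After that, the homogeneity bookkeeping in Step 2 is routine.
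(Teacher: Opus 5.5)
Your proof is correct and is essentially the paper's argument: the paper also builds explicit operators $Q_i, Q_v\in{\rm Ann}(f)$ by subtracting $\frac{1}{c_{\gamma_\tau}\gamma_\tau!}(Q\bullet f)\,P_{\gamma_\tau}$ from $Q$, and then compares $(-\omega,\omega)$-weights to see that the initial form is $Q$. The only difference is that the paper does this for $\ell=1$ and transfers to $f^\ell$ via Lemma \ref{ell} (using $\ell\lambda_i$ and $V_{\tau_\ell}=V_\tau$), whereas you work directly with the lowest-weight piece $f_\tau^\ell$ of $f^\ell$; this also cleanly separates the support point $\gamma_0$ used for $P_{\gamma_0}$ from the arbitrary point $\gamma_\tau\in\tau$ appearing in the constant.
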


{\em Proof.} We first prove the statement for $\ell=1$. The clue is the operator $P_\gamma$ given in Lemma \ref{lemcte} for any $\gamma\in{\rm Supp}(f)$.

\vspace{3mm}

Let us prove that $\partial_i^{\lambda_i+1}$ belongs to the initial ideal ${\rm in}_{(-\omega,\omega)}\big({\rm Ann}(f)\big)$. We claim that there is an operator $Q_i\in{\rm Ann}(f)$ such that ${\rm in}_{(-\omega,\omega)}(Q_i)=\partial_i^{\lambda_i+1}$. Indeed, by Lemma \ref{Tec} we have
\[\partial_i^{\lambda_i+1}\bullet f=\sum_{\gamma\in{\rm Supp}(f),\ \lambda_i+1\leq\gamma_i}c_\gamma\frac{\gamma!}{\big(\gamma-(\lambda_i+1)e_i\big)!}{\mathbf x}^{\gamma-(\lambda_i+1)e_i}\]
We set
\[Q_i:=\partial^{(\lambda_i+1)e_i}-\sum_{\gamma\in{\rm Supp}(f),\ \lambda_i+1\leq\gamma_i}c_\gamma\frac{\gamma!}{\big(\gamma-(\lambda_i+1)e_i\big)!}\frac{1}{c_{\gamma_\tau}\gamma_\tau!}{\mathbf  x}^{\gamma-(\lambda_i+1)e_i}P_{\gamma_\tau}\]
where $\gamma_\tau$ is any element in $\tau\cap{\rm Supp}(f)$. By construction $Q_i\in{\rm Ann}(f)$. The order of any monomial in ${\mathbf x}^{\gamma-(\lambda_i+1)e_i}P_{\gamma_\tau}$ is $-\langle\gamma-(\lambda_i+1)e_i,\omega\rangle+\langle\gamma_\tau,\omega\rangle$ by Remark \ref{remhom}. Moreover
\[-\langle\gamma-(\lambda_i+1)e_i,\omega\rangle+\langle\gamma_\tau,\omega\rangle<\langle(\lambda_i+1)e_i,\omega\rangle\]
because the condition $\lambda_i+1\leq\gamma_i$ implies that for any $\gamma$ appearing in the sum $Q_i-\partial^{(\lambda_i+1)e_i}$, $\gamma$ does not belong to the face $\tau$ and hence $\langle\gamma_\tau,\omega\rangle<\langle\gamma,\omega\rangle$. Therefore ${\rm in}_{(-\omega,\omega)}(Q_i)=\partial_i^{\lambda_i+1}$ as we wanted to prove.

\vspace{3mm}

Finally, if $v\in V_\tau$ we have
\[\Big(\sum_{j=1}^nv_jx_j\partial_j-\langle v,\gamma_\tau\rangle\Big)\bullet f=\sum_{\gamma\notin\tau}c_\gamma\langle v,\gamma-\gamma_\tau\rangle{\mathbf  x}^\gamma\]
We set
\[Q_v:=\sum_{j=1}^nv_jx_j\partial_j-\langle v,\gamma_\tau\rangle-\frac{1}{c_{\gamma_\tau}\gamma_\tau!}\sum_{\gamma\notin\tau}c_\gamma\langle v,\gamma-\gamma_\tau\rangle{\mathbf x}^\gamma P_{\gamma_\tau}\]
where we are considering any $\gamma_\tau\in\tau$. As before it is straightforward to check that $Q_v\in{\rm Ann}(f)$ and that ${\rm in}_{(-\omega,\omega)}(Q_v)=\sum_{j=1}^nv_jx_j\partial_j-\langle v,\gamma_\tau\rangle$.

\vspace{3mm}

Let us suppose now that $\ell>1$. Then we apply the previous result to the polynomial $g=f^\ell$.  By Lemma \ref{ell} the fans $\Sigma_f$ and $\Sigma_{f^\ell}$ are equal and there is a bijection among the faces of $\Gamma(f)$ and $\Gamma(f^\ell)$. Then
\[{\rm max}\ \{\gamma_i\ |\ \gamma\in\tau\cap{\rm Supp}(f^\ell)\}=\ell\lambda_i\]
Here we are using that this maximum is attached by a vertex of $\tau_\ell$ and $\lambda_i$ in a vertex of $\tau$. The first claim is proved.

Regarding the second claim it is enough to point out that the vector spaces $V_\tau$ and $V_{\tau_\ell}$ are the same and that $\gamma_{\tau_\ell}=\ell\gamma_\tau$.
\hfill$\Box$

\vspace{3mm}

\begin{lem}
Let $\ell\in\N$ and $\omega\in\R^n\setminus\{0\}$, then
\[{\rm in}_{(-\omega,\omega)}\big({\rm Ann}(\frac{1}{f^\ell})\big)\subseteq{\rm Ann}\big(\frac{1}{f_\tau^\ell}\big)\]
where $\tau=\tau_\omega$ is the face determined by $\omega$, and
\[f_\tau=\sum_{\gamma\in{\rm Supp}(f)\cap\tau}c_\gamma{\bf x}^\gamma\]
\label{uy}
\end{lem}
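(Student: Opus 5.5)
Since ${\rm in}_{(-\omega,\omega)}\big({\rm Ann}(1/f^\ell)\big)$ is generated as a left ideal by the initial forms ${\rm in}_{(-\omega,\omega)}(P)$ with $P\in{\rm Ann}(1/f^\ell)$, and ${\rm Ann}(1/f_\tau^\ell)$ is a left ideal, it suffices to show ${\rm in}_{(-\omega,\omega)}(P)\bullet f_\tau^{-\ell}=0$ for every such $P$. The plan is to treat the action of $D_n$ on rational functions of the form $g/f^{k}$ as a graded action with respect to $\omega$, and to compare the leading part of $P\bullet f^{-\ell}$ with ${\rm in}_{(-\omega,\omega)}(P)\bullet f_\tau^{-\ell}$.

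Concretely, I would work in the ring of formal expansions where $f=f_\tau+(f-f_\tau)$ and every monomial of $f-f_\tau$ has $\omega$-weight strictly greater than ${\rm ord}_f(\omega)$, since $\tau=\tau_\omega$ is the minimizing face. Give ${\bf x}^\alpha$ weight $\langle\omega,\alpha\rangle$. Then
\[
f^{-\ell}=f_\tau^{-\ell}\Big(1+\frac{f-f_\tau}{f_\tau}\Big)^{-\ell}=f_\tau^{-\ell}\sum_{k\geq 0}\binom{-\ell}{k}\Big(\frac{f-f_\tau}{f_\tau}\Big)^k .
\]
Here $f_\tau^{-\ell}$ is $\omega$-homogeneous of weight $-\ell\,{\rm ord}_f(\omega)$ (being a quasi-homogeneous rational function), and the correction terms have strictly larger weight. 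The key observation is that an operator ${\bf x}^\alpha\partial^\beta$ sends a $\omega$-homogeneous rational function of weight $d$ to one of weight $d+\langle\omega,\alpha\rangle-\langle\omega,\beta\rangle$, i.e.\ it shifts weight by minus its $(-\omega,\omega)$-degree. This is checked on monomials and then extended to homogeneous rational functions via the quotient rule, or via Lemma \ref{Tec}.

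Writing $P=\sum_m P_m$ as a sum of $(-\omega,\omega)$-homogeneous parts with top part $P_M={\rm in}_{(-\omega,\omega)}(P)$, the lowest-weight component of $P\bullet f^{-\ell}$ is then $P_M\bullet f_\tau^{-\ell}$, of weight $-\ell\,{\rm ord}_f(\omega)-M$. All other contributions come either from $P_m$ with $m<M$ or from the higher-order terms of the expansion, and so have strictly larger weight. Since $P\bullet f^{-\ell}=0$, every weight component vanishes, which gives $P_M\bullet f_\tau^{-\ell}=0$.

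The main obstacle is making the weight-component argument rigorous for rational functions, including when $\omega$ has irrational or negative entries. I would first try to avoid the infinite expansion altogether. Multiply through by a power of $f$: for $P$ of order $\leq d$, the function $f^{\ell+d}\,(P\bullet f^{-\ell})$ is a polynomial $\Phi_P(f)$ built from $f$ and its derivatives, as in Lemma \ref{Tec}. Take the polynomial identity $\Phi_P(f)=0$ and extract its lowest $\omega$-weight part, which equals $f_\tau^{\ell+d}\,(P_M\bullet f_\tau^{-\ell})$ computed by the same universal formula with $f_\tau$ in place of $f$. This works because $\partial^\beta\bullet f$ has lowest weight part $\partial^\beta\bullet f_\tau$ whenever the latter is nonzero, and otherwise contributes only at higher weight. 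The delicate bookkeeping is exactly the case where $\partial^\beta\bullet f_\tau=0$, and I expect that checking the lowest-weight term survives coherently there will need the most care.
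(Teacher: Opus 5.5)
Your proposal is correct, and the route you settle on is essentially the paper's argument: clear denominators so that $f^{\ell+d}(P\bullet f^{-\ell})$ becomes a polynomial given by a universal formula in $f$ and its derivatives (Lemma \ref{Tec}), bound the $\omega$-weight of its monomials from below by $d\,{\rm ord}_f(\omega)-M$, and identify that weight component with $f_\tau^{\ell+d}(P_M\bullet f_\tau^{-\ell})$ (the paper first reduces to $\ell=1$ using $(f^\ell)_{\varphi_\ell(\tau)}=f_\tau^\ell$). The case $\partial^\beta\bullet f_\tau=0$ you flag is harmless: if every monomial of each factor $g_i$ has weight $\ge N_i$, then the weight-$\sum_i N_i$ component of $\prod_i g_i$ is the product of the weight-$N_i$ components of the $g_i$, even when some of those components are zero, so such terms contribute zero on both sides.
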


{\em Proof.} It is enough to prove it for $\ell=1$, since by Lemma \ref{ell} we have $$(f^\ell)_{\varphi_\ell(\tau)}=(f_\tau)^\ell$$ for any $\tau$ face of $\Gamma(f)$.

\vspace{2mm}

Let us fix $\omega\in\R^n\setminus\{0\}$, notice that, by definition,
\[f_\tau={\rm in}_{-\omega}(f)\]
where $\tau=\tau_\omega$ and for any $u\in\R^n$ ${\rm in}_u(f)=\sum_{\langle u,\gamma\rangle\ {\rm max}}c_\gamma{\bf x}^\gamma$.
We denote $d={\rm deg}_{-\omega}(f)$ the weighted degree of $f$ with respect to $-\omega$, then for every $\gamma\in{\rm Supp}(f)$
\[-\langle\omega,\gamma\rangle\leq d\]
and we have equality if and only if $\gamma\in\tau$.

\vspace{2mm}

For any operator $Q=\sum_{(\alpha,\beta)\in A_Q}c_{\alpha,\beta}{\bf x}^\alpha\partial^\beta\in D_n$ let us study $Q\bullet\frac{1}{f}$. First, for any $\beta\in\N^n$, we have by Lemma \ref{Tec},
\begin{equation}
\partial^\beta\bullet\frac{1}{f}=\sum_{\beta_1+\cdots+\beta_r=\beta}\frac{C_{\beta_1,\ldots,\beta_r}}{f^{r+1}}\prod_{i=1}^r\partial^{\beta_i}\bullet f
\label{eqCOEFF}
\end{equation}
where $C_{\beta_1,\ldots,\beta_r}=(-1)^r\frac{r!}{i_1!\cdots i_s!}\binom{\beta}{\beta_1,\ldots,\beta_r}$ is a non-zero coefficient, and where the sum runs over any $\beta_1,\ldots,\beta_n\in\Z^n_{\geq 0}$ partition of the vector $\beta$. Then
\[\begin{array}{rl}
Q\bullet\frac{1}{f}= & \sum_{(\alpha,\beta)\in A_Q}c_{\alpha\beta}{\bf x}^\alpha\partial^\beta\bullet\frac{1}{f}\\
\\
 = & \sum_{(\alpha,\beta)\in A_Q}c_{\alpha\beta}{\bf x}^\alpha\sum_{\beta_1+\cdots+\beta_r=\beta}\frac{C_{\beta_1,\ldots,\beta_r}}{f^{r+1}}\prod_{i=1}^r\partial^{\beta_i}\bullet f\\
 \\
 = & \frac{1}{f^{M_Q}}\sum_{(\alpha,\beta)\in A_Q}\sum_{\beta_1+\cdots+\beta_r=\beta}C_{\alpha,\beta_1,\ldots,\beta_r}f^{M_Q-r-1}{\bf x}^\alpha\prod_{i=1}^r\partial^{\beta_i}\bullet f\\
 \end{array}\]
where $M_Q={\rm max }\{|\beta| \ |\ (\alpha,\beta)\in A_Q\}+1$ and we have collected the coefficients in $C_{\alpha,\beta_1,\ldots,\beta_r}$ to simplify notation. It follows that
\[Q\bullet\frac{1}{f}=\frac{1}{f^{M_Q}}Q_f\]
where
\[Q_f=\sum_{(\alpha,\beta)\in A_Q}\sum_{\beta_1+\cdots+\beta_r=\beta}C_{\alpha,\beta_1,\ldots,\beta_r}f^{M_Q-r-1}{\bf x}^\alpha\prod_{i=1}^r\big(\partial^{\beta_i}\bullet f\big)\]
is a polynomial in $\C[x_1,\ldots,x_n]$. For any $(\alpha,\beta)\in A_Q$ and any partition $\beta=\beta_1+\cdots+\beta_r$ the monomials of the piece $f^{M_Q-r-1}{\bf x}^\alpha\prod_{i=1}^r\big(\partial^{\beta_i}\bullet f\big)$ of $Q_f$ are the form
\[M:=C_M\big(\prod_{j=1}^{M_Q-r-r}{\bf x}^{\gamma_j}\big){\bf x}^\alpha\prod_{i=1}^r {\bf x}^{\widetilde\gamma_i-\beta_i}\]
for certain non-zero coefficient $C_M$, and with $\gamma_j,\widetilde\gamma_i\in{\rm Supp}(f)$ and $\widetilde\gamma_i\geq\beta_i$. Then
\[\begin{array}{rl}
{\rm deg}_{-\omega}(M) & =-\sum_{j=1}^{M_Q-r-1}\langle\omega,\gamma_j\rangle-\langle\omega,\alpha\rangle-\sum_{i=1}^r\langle\omega,\widetilde\gamma_i-\beta_i\rangle\\
\\
 & =-\langle\omega,\alpha\rangle+\langle\omega,\beta\rangle-\sum_{j=1}^{M_Q-r-1}\langle\omega,\gamma_j\rangle-\sum_{i=1}^r\langle\omega,\widetilde\gamma_i\rangle\\
\\
 & \leq n+(M_Q-r-1)d+rd\\
 \\
 & =n+(M_Q-1)d\\
\end{array}\]
where $n:={\rm deg}_{(-\omega,\omega)}(Q)$. Let us define $P:={\rm in}_{(-\omega,\omega)}(Q)$, then we define $A_P\subseteq A_Q$ such that
\[P=\sum_{(\alpha,\beta)\in A_P}c_{\alpha\beta}{\bf x}^\alpha\partial^\beta\]
Consider now the polynomial
\[H:=\sum_{(\alpha,\beta)\in A_P}\sum_{\beta_1+\cdots+\beta_r=\beta}C_{\alpha,\beta_1,\ldots,\beta_r}f_\tau^{M_Q-r-1}{\bf x}^\alpha\prod_{i=1}^r\big(\partial^{\beta_i}\bullet f_\tau\big)\]
It is straightforward to check that ${\rm in}_{-\omega}(H)=H$ and ${\rm deg}_{-\omega}(H)=n+(M_Q-1)d$.

Notice that $H$ is a piece of $Q_f$ and any monomial in $Q_f$ not appearing in $H$ is of the form
\[\big(\prod_{j=1}^{M_Q-r-1}{\bf x}^{\gamma_j}\big){\bf x}^\alpha\prod_{i=1}^r{\bf x}^{\widetilde\gamma_i-\beta_i}\]
where $(\alpha,\beta)\in A_Q$, $\beta=\beta_1+\cdots+\beta_r$ and at least one of the following conditions hold:
\begin{enumerate}
\item[(i)] $(\alpha,\beta)\in A_Q\setminus A_P$

\item[(ii)] one $\gamma_j\notin\tau$

\item[(iii)] one $\widetilde\gamma_i\notin\tau$
\end{enumerate}
Hence it follows that
\[{\rm in}_{-\omega}(Q_f)=H\]

We claim that
\[P\bullet\frac{1}{f_\tau}=\frac{1}{f_\tau^{M_Q}}H\]
Indeed,
\[\begin{array}{rl}
P\bullet\frac{1}{f_\tau} & =\sum_{(\alpha,\beta)\in A_P}c_{\alpha\beta}{\bf x}^\alpha\partial^\beta\bullet\frac{1}{f_\tau}\\
\\
 & =\sum_{(\alpha,\beta)\in A_P}c_{\alpha\beta}{\bf x}^\alpha\sum_{\beta_1+\cdots+\beta_r=\beta}(-1)^r\frac{r!}{f_\tau^{r+1}}C_{\beta_1,\ldots,\beta_r}\prod_{i=1}^r\partial^{\beta_i}\bullet f_\tau\\
 \\
 & =\frac{1}{f_\tau^{M_P}}\sum_{(\alpha,\beta)\in A_P}\sum_{\beta_1+\cdots+\beta_r=\beta}C_{\alpha,\beta_1,\ldots,\beta_r}f_\tau^{M_P-r-1}{\bf x}^\alpha\prod_{i=1}^r\big(\partial^{\beta_i}\bullet f_\tau\big)\\
 \\
 & =\frac{1}{f_\tau^{M_Q}}\sum_{(\alpha,\beta)\in A_P}\sum_{\beta_1+\cdots+\beta_r=\beta}C_{\alpha,\beta_1,\ldots,\beta_r}f_\tau^{M_Q-r-1}{\bf x}^\alpha\prod_{i=1}^r\big(\partial^{\beta_i}\bullet f_\tau\big)\\
 \end{array}\]
where $M_P$ is defined analogously as $M_Q$, and it follows by definition that $M_P\leq M_Q$.

\vspace{2mm}

To prove the statement, it is enough to prove that for any $P={\rm in}_{(-\omega,\omega)}(Q)\in{\rm in}_{(-\omega,\omega)}\big({\rm Ann}(\frac{1}{f})\big)$ we have that $P\in{\rm Ann}\big(\frac{1}{f_\tau}\big)$.

Since $Q\bullet\frac{1}{f}=\frac{1}{f^{M_Q}}Q_f$, the polynomial $Q_f$ must vanish. In particular the $(-\omega)$-degree part must vanish, i.e., $H=0$ and therefore we deduce that $P\in{\rm Ann}\big(\frac{1}{f_\tau}\big)$ as we wanted to prove.
\hfill$\Box$

\vspace{3mm}

\begin{cor}
The Gröbner fan of the ideal ${\rm Ann}\big(\frac{1}{f^\ell}\big)$ for $\ell\in\N$ is a subdivision of the fan $\Sigma_{f^\ell}$.

Moreover, for any $\omega\in\R^n\setminus\{0\}$ the b-function of ${\rm Ann}\big(\frac{1}{f_{\tau_\omega}^\ell}\big)$ divides the b-function of the ideal ${\rm Ann}\big(\frac{1}{f^\ell}\big)$.
\label{corFAN}
\end{cor}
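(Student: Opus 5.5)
The plan is to derive both claims of Corollary~\ref{corFAN} from Lemma~\ref{uy}, together with a homogeneity argument and Lemma~\ref{ell} to pass from $\Sigma_f$ to $\Sigma_{f^\ell}$.

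\textbf{The fan statement.} Fix a cone $C_\tau\in\Sigma_{f^\ell}=\Sigma_f$ and $\omega,\omega'\in\stackrel{\circ}{C_\tau}$. Then $\tau_\omega=\tau_{\omega'}=\tau$, so Lemma~\ref{uy} gives
\[
{\rm in}_{(-\omega,\omega)}\big({\rm Ann}(1/f^\ell)\big)\subseteq {\rm Ann}(1/f_\tau^\ell)
\quad\text{and}\quad
{\rm in}_{(-\omega',\omega')}\big({\rm Ann}(1/f^\ell)\big)\subseteq {\rm Ann}(1/f_\tau^\ell).
\]
By itself this does not show that the initial ideal is constant on $\stackrel{\circ}{C_\tau}$. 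What it does show is that the Gröbner fan is ``compatible'' with $\Sigma_{f^\ell}$: no cone of the Gröbner fan can have relative interior meeting two different open cones $\stackrel{\circ}{C_\tau}$ and $\stackrel{\circ}{C_{\tau'}}$.

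To see this, suppose ${\rm in}_{(-\omega,\omega)}$ is constant on a Gröbner cone $\sigma$ whose interior meets both $\stackrel{\circ}{C_\tau}$ and $\stackrel{\circ}{C_{\tau'}}$ with $\tau\ne\tau'$. The common initial ideal $J$ then lies in ${\rm Ann}(1/f_\tau^\ell)\cap{\rm Ann}(1/f_{\tau'}^\ell)$. I would then use the Euler-type operators coming from Lemma~\ref{lema0}, or rather their analogues for $1/f^\ell$ obtained through the same $P_\gamma$ construction; these give $\sum_j v_jx_j\partial_j+\ell\langle v,\gamma_\tau\rangle\in J$ for $v\in V_\tau$. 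Evaluating such an operator on $1/f_{\tau'}^\ell$ forces $\langle v,\gamma_\tau\rangle=\langle v,\gamma_{\tau'}\rangle$, and a suitable choice of $v$ contradicts $\tau\neq\tau'$.

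The cleaner route, which I would try first, is to show that the intersection of the two cones can only be a common face. That yields that each Gröbner cone lies in a single cone of $\Sigma_{f^\ell}$, which is exactly the subdivision claim.

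\textbf{The divisibility statement.} Set $J={\rm in}_{(-\omega,\omega)}\big({\rm Ann}(1/f^\ell)\big)$, $\tau=\tau_\omega$ and $g=f_\tau$.

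1. Since $g$ is $(-\omega)$-quasi-homogeneous, ${\rm Ann}(1/g^\ell)$ is $(-\omega,\omega)$-homogeneous. Concretely, $E_\omega=\sum_j\omega_jx_j\partial_j$ acts on $1/g^\ell$ as multiplication by $-\ell\,{\rm ord}_f(\omega)$, and homogeneous components of annihilating operators again annihilate. Hence ${\rm in}_{(-\omega,\omega)}\big({\rm Ann}(1/g^\ell)\big)={\rm Ann}(1/g^\ell)$.

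2. Lemma~\ref{uy} gives $b_{{\rm Ann}(1/f^\ell),\omega}(s)\in J\cap\C[s]\subseteq {\rm Ann}(1/g^\ell)\cap\C[s]$.

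3. By step 1, ${\rm Ann}(1/g^\ell)\cap\C[s]$ is generated by $b_{{\rm Ann}(1/g^\ell),\omega}$. Therefore $b_{{\rm Ann}(1/g^\ell),\omega}$ divides $b_{{\rm Ann}(1/f^\ell),\omega}$.

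Step 1 is routine: $\partial^\beta$ shifts the $\omega$-weight and $x^\alpha$ shifts it back, so each $(-\omega,\omega)$-homogeneous component of an operator $P$ maps $1/g^\ell$ into a different weight space of the graded module $\C[x]_g$. Hence $P\bullet(1/g^\ell)=0$ forces each component to annihilate $1/g^\ell$.

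I expect the fan refinement to be the main obstacle, since Lemma~\ref{uy} only gives an inclusion and not an equality.
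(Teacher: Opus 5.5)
Your divisibility half is correct, and it is what the paper intends: the paper states the corollary as immediate from Lemma~\ref{uy}. Your Step~1 is valid but not needed. Every polynomial in $s$ is $(-\omega,\omega)$-homogeneous of weight $0$, so $K\cap\C[s]\subseteq{\rm in}_{(-\omega,\omega)}(K)\cap\C[s]$ for any ideal $K$.

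\textbf{The fan half is not proved.} The operators you want to use do not exist in general:
\[\sum_j v_jx_j\partial_j+\ell\langle v,\gamma_\tau\rangle\in{\rm in}_{(-\omega,\omega)}\big({\rm Ann}(1/f^\ell)\big)\quad\text{for } v\in V_\tau.\]
Taking $v=\omega\in C_\tau\subseteq V_\tau$, they would force $b_{{\rm Ann}(1/f^\ell),\omega}(s)=s+\ell\,{\rm ord}_f(\omega)$. This contradicts the paper's examples: for $f=x^3+y^4$ and $\omega={\bf e}$ the b-function has degree $3$, and in Theorem~\ref{bpq} it has degree $q$ when $p\omega_1>q\omega_2$. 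The $P_\gamma$ trick of Lemma~\ref{lema0} relies on $P_\gamma\bullet f$ being a constant, and there is no analogue of this for $1/f$. Your ``cleaner route'' only restates what has to be shown. As you suspected, the inclusion in Lemma~\ref{uy} on its own does not let the initial ideal recover $\tau_\omega$.

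\textbf{The missing idea} is to feed Lemma~\ref{uy} specific elements whose initial forms remember the face.
\begin{enumerate}
\item[(a)] For $\ell\geq 1$, the operators $f\partial_i+\ell\frac{\partial f}{\partial x_i}$ lie in ${\rm Ann}(1/f^\ell)$.
\item[(b)] The terms ${\bf x}^\gamma\partial_i$ and ${\bf x}^{\gamma-{\bf e_i}}$ both have weight $-\langle\omega,\gamma\rangle+\omega_i$. Hence the initial forms of these operators are $f_{\tau_\omega}\partial_i+\ell\frac{\partial f_{\tau_\omega}}{\partial x_i}$.
\item[(c)] Suppose $\omega$ and $\omega'$ give the same initial ideal $J$. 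Lemma~\ref{uy} applied at $\omega'$ gives $f_{\tau_\omega}\partial_i+\ell\frac{\partial f_{\tau_\omega}}{\partial x_i}\in J\subseteq{\rm Ann}\big(1/f_{\tau_{\omega'}}^\ell\big)$.
\item[(d)] Evaluating on $1/f_{\tau_{\omega'}}^\ell$ yields $\ell\, f_{\tau_{\omega'}}^{1-\ell}\,\frac{\partial}{\partial x_i}\big(f_{\tau_\omega}/f_{\tau_{\omega'}}\big)=0$ for every $i$.
\item[(e)] So $f_{\tau_\omega}$ is a nonzero constant multiple of $f_{\tau_{\omega'}}$. The two faces then contain the same points of ${\rm Supp}(f)$, and since each face is the convex hull of those points, $\tau_\omega=\tau_{\omega'}$.
\end{enumerate}
Thus the initial ideal determines $\tau_\omega$. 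Every relatively open Gröbner cone therefore lies inside a single relatively open cone of $\Sigma_f=\Sigma_{f^\ell}$, which is the refinement claim. The case $\ell=0$ is trivial.
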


\begin{rem}
In general the Gröbner fan of Ann$\big(\frac{1}{f^\ell}\big)$ is not equal to $\Sigma_{f^\ell}$, as we can see in the following example. Let $f$ be the polynomial
\[f=y+x^3+y^2\]
The fan $\Sigma_f$ consists of the following cones:
\[\Sigma_f=\{C_1,C_2,C_3,\rho_1,\rho_2,\rho_3,(0,0)\}\]
where
\[\begin{array}{l}
C_1=(1,3)\R_{>0}+(1,0)\R_{>0}\\
C_2=(1,0)\R_{>0}+(-2,-3)\R_{>0}\\
C_3=(-2,-3)\R_{>0}+(1,3)\R_{>0}\\
\rho_1=(1,3)\R_{>0}\\
\rho_2=(1,0)\R_{>0}\\
\rho_3=(-2,-3)\R_{>0}\\
\end{array}\]
We have that $C_2$ is not an element of the Gröbner fan of Ann$\big(\frac{1}{f}\big)$, as we can check with Singular \cite{singular} that if $\omega=(1,-1)\in C_2$, then
\[{\rm in}_{(-\omega,\omega)}\big({\rm Ann}(\frac{1}{f})\big)=D_2\big(\partial_x,y^2\partial_y+2y\big)\]
while for $\omega=(0,-1)\in C_2$ we have
\[{\rm in}_{(-\omega,\omega)}\big({\rm Ann}(\frac{1}{f})\big)=D_2\big(y\partial_x,y^2\partial_y+2y,4x^3\partial_x+6x^2y\partial_y+12x^2-\partial_x\big)\]

\end{rem}

\vspace{5mm}

{\em Proof of Theorem \ref{uufff}.} First we prove (ii). Since the b-function is non-zero, it is enough to prove that $s-\ell {\rm ord}_f(\omega)$ is a multiple of the b-function.

Let $\omega\in\R^n$ be non-zero, there exists a unique cone $C\in\Sigma_f$ such that $\omega\in \stackrel{\circ}{C}$. Let $\tau$ be the face of $\Gamma(f)$ such that $C$ is the cone dual to $\tau$, and let $k$ be the dimension of $\tau$. Then
\[\omega\in C\subset V_\tau\]
and therefore, by Lemma \ref{lema0} we have that
\[\omega_1x_1\partial_1+\cdots+\omega_nx_n\partial_n-\ell\langle\omega,\gamma_\tau\rangle\in{\rm in}_{(-\omega,\omega)}\big({\rm Ann}(f^\ell)\big)\]
which implies (ii).

\vspace{3mm}

Now we prove (i) for $\ell\in\Z_{<0}$.

If $\ell=-1$, notice that for any exponent $\gamma_\tau$ in ${\rm Supp}(f)\cap\tau$,
\[\big(\omega_1x_1\partial_1+\cdots+\omega_nx_n\partial_n+\langle\omega,\gamma_\tau\rangle\big)\bullet\frac{1}{f}=-\frac{1}{f^2}\sum_{\gamma\notin\tau}
c_\gamma\langle\omega,\gamma-\gamma_\tau\rangle{\bf x}^\gamma\]
and we deduce that
\[\omega_1x_1\partial_1+\cdots+\omega_nx_n\partial_n+\langle\omega,\gamma_\tau\rangle\in{\rm Ann}\big(\frac{1}{f_\tau}\big)\]
 Then, since the b-function of Ann$\big(\frac{1}{f_\tau}\big)$ is non-zero, we deduce
\[b_{{\rm Ann}(f_\tau^{-1}),\omega}(s)=s+\langle\omega,\gamma_\tau\rangle\]
By Lemma \ref{uy} it follows that $s+\langle\omega,\gamma_\tau\rangle$ divides $b_{{\rm Ann}(f^{-1}),\omega}(s)$.

When $\ell<-1$ it is enough to notice that, denoting $g=f^{-\ell}$, we have ord$_g(\omega)=-\ell{\rm ord}_f(\omega)$.
\hfill$\Box$

\vspace{3mm}

As it has already been pointed out, the b-function with weights of the ideal ${\rm Ann}(f^\ell)$ depends strongly on the polynomial $f$. Let us see now how we can read from the b-function certain properties of $f$.

\begin{prop}
Let $f\in\C[x_1,\ldots,x_n]$ be a quasi-homogeneous polynomial of degree $d$, with weight vector ${\bf v}\in\R^n\setminus\{\bf 0\}$. Then, for any $\ell\in\Z$
\[b_{{\rm Ann}(f^\ell),\omega}(s)=s-\ell\lambda d\]
for every $\omega=\lambda{\bf v}$ with $\lambda\in\R\setminus\{0\}$.
\label{6enero}
\end{prop}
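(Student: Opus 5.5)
The plan is to use the Euler operator associated with the quasi-homogeneity. Since $f$ is quasi-homogeneous of degree $d$ with weight ${\bf v}$, every $\gamma\in{\rm Supp}(f)$ satisfies $\langle{\bf v},\gamma\rangle=d$. Put $E_{\bf v}=v_1x_1\partial_1+\cdots+v_nx_n\partial_n$. Then $E_{\bf v}\bullet f=df$. By the chain rule, for any $\ell\in\Z$ (also negative, acting on $f^\ell$ in the localization),
\[E_{\bf v}\bullet f^\ell=\ell f^{\ell-1}E_{\bf v}\bullet f=\ell d f^\ell .\]
Hence $E_{\bf v}-\ell d\in{\rm Ann}(f^\ell)$. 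For $\omega=\lambda{\bf v}$ we have $s=\omega_1x_1\partial_1+\cdots+\omega_nx_n\partial_n=\lambda E_{\bf v}$, so
\[s-\ell\lambda d=\lambda\big(E_{\bf v}-\ell d\big)\in{\rm Ann}(f^\ell).\]

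The second step is to check that this operator is $(-\omega,\omega)$-homogeneous. Each monomial $x_j\partial_j$ has weight $-\omega_j+\omega_j=0$, and so do the constants. So the operator equals its own initial form, and
\[s-\ell\lambda d\in{\rm in}_{(-\omega,\omega)}\big({\rm Ann}(f^\ell)\big)\cap\C[s].\]
Therefore $b_{{\rm Ann}(f^\ell),\omega}(s)$ divides $s-\ell\lambda d$.

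It remains to rule out $b=1$, that is, ${\rm in}_{(-\omega,\omega)}({\rm Ann}(f^\ell))=D_n$. This is the only real obstacle, since the paper allows this pathology in general. The natural route is Theorem \ref{uufff}(i), which gives that $\ell\,{\rm ord}_f(\omega)$ is a root of the b-function. Its proof tacitly assumes the b-function has positive degree, so I would supply that directly here.

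The argument is as follows. Take any $Q\in{\rm Ann}(f^\ell)$ and decompose $Q=\sum_k Q_k$ into $(-\omega,\omega)$-homogeneous pieces. Because $f^\ell$ is $\omega$-quasi-homogeneous (a $\lambda$-scaled ${\bf v}$-weight, with $f^\ell$ a homogeneous element of weight $\ell\lambda d$ in $\C[{\bf x}]_f$), the operator $Q_k$ sends $f^\ell$ to a homogeneous element of weight $\ell\lambda d-k$. Distinct $k$ give distinct weights, so $Q\bullet f^\ell=0$ forces $Q_k\bullet f^\ell=0$ for every $k$.

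It follows that ${\rm Ann}(f^\ell)$ is $(-\omega,\omega)$-homogeneous, so ${\rm in}_{(-\omega,\omega)}({\rm Ann}(f^\ell))={\rm Ann}(f^\ell)$. This ideal is proper because $1\bullet f^\ell\neq0$. This also agrees with Lemma \ref{uy}, since $f_\tau=f$ here, and with Theorem \ref{uufff}(i), since ${\rm ord}_f(\lambda{\bf v})=\lambda d$.

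Hence $b_{{\rm Ann}(f^\ell),\omega}(s)$ is a monic polynomial of positive degree dividing the linear polynomial $s-\ell\lambda d$, so the two are equal. This gives the statement for every $\ell\in\Z$ and every $\lambda\neq0$. The case $\ell=0$ is consistent with Remark \ref{rem0}.
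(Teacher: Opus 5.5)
Your proof is correct and is essentially the paper's argument. The Euler operator gives $s-\ell\lambda d\in{\rm Ann}(f^\ell)$, this operator is $(-\omega,\omega)$-homogeneous and so lies in the initial ideal, and hence the b-function divides it. The differences are minor but worth noting. You treat all $\ell\in\Z$ at once via the chain rule, whereas the paper cites Theorem~\ref{uufff}(ii) for $\ell\in\N$ and computes directly only for negative $\ell$. You also actually prove, via the weight grading of $\C[{\bf x}]_f$, that ${\rm in}_{(-\omega,\omega)}({\rm Ann}(f^\ell))\subseteq{\rm Ann}(f^\ell)$ is a proper ideal; the paper only excludes the case $b=1$ through its standing assumption that initial ideals are proper.
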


{\em Proof.} A quasi-homogeneous polynomial $f=\sum_\gamma c_\gamma{\bf x}^\gamma$ is such that $\langle{\bf v},\gamma\rangle=d$. Then, for any $\omega=\lambda{\bf v}$ with $\lambda\in\R\setminus\{0\}$ we have
\[{\rm ord}_f(\omega)={\rm min}\{\langle\gamma,\omega\rangle\ |\ \gamma\in\Gamma(f)\}=\lambda d\]
and if $\ell\in\N$ the result follows by Theorem \ref{uufff}.

Moreover,
\[\begin{array}{rl}
\omega_1x_1\partial_1+\cdots+\omega_nx_n\partial_n\bullet\frac{1}{f^\ell} &=\sum_{i=1}^n\omega_ix_i\partial_i\bullet\frac{1}{f^\ell}\\
 & =\sum_{i=1}^n\omega_ix_i(-\ell)\frac{1}{f^{\ell+1}}\sum_\gamma c_\gamma\partial_i\bullet {\bf x}^\gamma\\
 & =-\frac{\ell}{f^{\ell+1}}\sum_{i=1}^n\omega_i\sum_\gamma c_\gamma\gamma_i{\bf x}^\gamma\\
 & =-\frac{\ell}{f^{\ell+1}}\sum_{i=1}^n\omega_i\gamma_if\\
 & =-\frac{\ell}{f^\ell}\lambda d\\
\end{array}\]
Hence
\[\omega_1x_1\partial_1+\cdots+\omega_nx_n\partial_n+\ell\lambda d\in{\rm Ann}\big(\frac{1}{f^\ell}\big)\]
and the result follows.
\hfill$\Box$

\vspace{3mm}

We deduce that if the b-function $b_{{\rm Ann}(f^\ell),{\bf e}}(s)$ has more than one root then the polynomial $f$ is not homogeneous.
 Recall that ${\bf e}=(1,\ldots,1)\in\R^n$, focussing on the vector ${\bf e}$ we can say more:
\begin{prop}
Let $f\in\C[x_1,\ldots,x_n]$ with ord$(f)=i$. Then $\ell i$ is a root of the b-function $b_{{\rm Ann}(f^\ell),{\bf e}}(s)$ for any $\ell\in\Z$. Moreover, if $i=0$ or $1$, then
\[\begin{array}{rl}
\mbox{ord}(f)=i & \Longleftrightarrow b_{{\rm Ann}(f^{-1}),{\bf e}}(s)=s+i\\
 \end{array}\]
\end{prop}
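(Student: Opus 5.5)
The first claim is a direct specialization of Theorem \ref{uufff}(i). For ${\bf e}=(1,\ldots,1)$,
\[{\rm ord}_f({\bf e})={\rm min}\{|\gamma|\ |\ \gamma\in\Gamma(f)\}={\rm min}\{|\gamma|\ |\ \gamma\in{\rm Supp}(f)\}={\rm ord}(f)=i,\]
since the minimum of a linear function on $\Gamma(f)$ is attained at a vertex. Hence $\ell i=\ell\,{\rm ord}_f({\bf e})$ is a root of $b_{{\rm Ann}(f^\ell),{\bf e}}(s)$ for every $\ell\in\Z$.

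The implication "$\Leftarrow$" follows from the same observation with $\ell=-1$. Suppose $b_{{\rm Ann}(f^{-1}),{\bf e}}(s)=s+i$. Its only root is $-i$, and $-{\rm ord}(f)$ must be a root. Therefore ${\rm ord}(f)=i$.

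For "$\Rightarrow$", the first part already shows that $s+i$ divides the b-function. So it suffices to show that $\theta+i\in{\rm in}_{(-{\bf e},{\bf e})}\big({\rm Ann}(\frac1f)\big)$, where $\theta=x_1\partial_1+\cdots+x_n\partial_n$. The tools are the operators $f\partial_j+\frac{\partial f}{\partial x_j}$ and $\frac{\partial f}{\partial x_k}\partial_j-\frac{\partial f}{\partial x_j}\partial_k$. Both annihilate $\frac1f$, since a vector field killing $f$ kills every function of $f$. Recall that a monomial ${\bf x}^\alpha\partial^\beta$ has $(-{\bf e},{\bf e})$-weight $|\beta|-|\alpha|$.

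\emph{Case $i=0$.} Here $f=c_0+\cdots$ with $c_0\neq0$. In $f\partial_j+\frac{\partial f}{\partial x_j}$, the term $c_0\partial_j$ has weight $1$. Every other term of $f\partial_j$ has weight $\le 0$, and $\frac{\partial f}{\partial x_j}$ has weight $\le 0$. Hence the initial form is $c_0\partial_j$, so every $\partial_j$ lies in the initial ideal. Then $\theta$ lies in it as well, and the b-function divides $s$.

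\emph{Case $i=1$.} Here $f=f_1+(\text{higher order})$ with $f_1\neq0$ linear. A linear change of coordinates preserves the $(-{\bf e},{\bf e})$-grading, the Euler operator $\theta$, and ${\rm ord}(f)$. So I would first reduce to the case $f_1=x_1$, noting that this invariance of the weighted b-function needs a brief justification. Then $\frac{\partial f}{\partial x_1}=1+\cdots$, while $\frac{\partial f}{\partial x_j}$ has no constant term for $j\ge2$.

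\begin{itemize}
\item For $j\ge 2$, take $\frac{\partial f}{\partial x_1}\partial_j-\frac{\partial f}{\partial x_j}\partial_1\in{\rm Ann}(\frac1f)$. Its unique weight-$1$ term is $\partial_j$, so $\partial_j$ is in the initial ideal.
\item Take $f\partial_1+\frac{\partial f}{\partial x_1}$. Its weight-$0$ part is $x_1\partial_1+1$, and all other terms have negative weight, so $x_1\partial_1+1$ is in the initial ideal.
\end{itemize}

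Thus $\theta+1=(x_1\partial_1+1)+\sum_{j\ge2}x_j\,\partial_j$ lies in ${\rm in}_{(-{\bf e},{\bf e})}\big({\rm Ann}(\frac1f)\big)$, and the b-function divides $s+1$. Combined with divisibility by $s+i$, this gives equality in both cases.

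The main obstacle is the case $i=1$. The naive Euler combination $\sum_j x_j\big(f\partial_j+\frac{\partial f}{\partial x_j}\big)$ only produces $f_1(\theta+1)$ in the initial ideal, not $\theta+1$ itself. This is why I rely on the coordinate change combined with the vector fields tangent to the level sets of $f$.
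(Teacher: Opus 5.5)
Your proof is correct. The first claim and the case $i=0$ of ``$\Rightarrow$'' are exactly as in the paper; the differences lie elsewhere.

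For ``$\Leftarrow$'' you read the conclusion off the first claim at $\ell=-1$: the only root $-i$ must equal $-{\rm ord}(f)$. The paper argues separately in each case. For $i=0$ it does a direct computation with Lemma~\ref{Tec}, showing that if $f(0)=0$ the monomial ${\bf x}^{|\beta^\ast|\gamma^\ast}$ in $P\bullet\frac1f$ cannot cancel. For $i=1$ it applies Lemma~\ref{uy} to get $x_1\partial_1+\cdots+x_n\partial_n+1\in{\rm Ann}(f_\tau^{-1})$ and evaluates on $\frac{1}{f_\tau}$. Your route costs nothing extra, since the first claim already rests on Theorem~\ref{uufff}(i), i.e.\ on Lemma~\ref{uy}. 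It also shows this implication holds for every $i$, so the hypothesis $i\in\{0,1\}$ is needed only for ``$\Rightarrow$'', consistent with the example $f=x^3+y^4$.

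For ``$\Rightarrow$'' with $i=1$ you normalize the linear part to $x_1$. The paper instead stays in the given coordinates with $\mathcal C=\{j\,|\,{\bf e_j}\in{\rm Supp}(f)\}$. The operators $\frac{\partial f}{\partial x_i}\partial_j-\frac{\partial f}{\partial x_j}\partial_i$ give $\partial_j$ for $j\notin\mathcal C$ and $c_{{\bf e_i}}\partial_j-c_{{\bf e_j}}\partial_i$ for $i,j\in\mathcal C$. Together with ${\rm in}_{(-{\bf e},{\bf e})}(f\partial_i+\frac{\partial f}{\partial x_i})=\sum_{j\in\mathcal C}c_{{\bf e_j}}x_j\partial_i+c_{{\bf e_i}}$, these yield $\sum_{j\in\mathcal C}x_j\partial_j+1$.

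The paper's bookkeeping needs no invariance statement. Your normalization is conceptually cleaner, and the justification you flag is short. The automorphism $x\mapsto Ax$, $\partial\mapsto A^{-T}\partial$ of $D_n$ maps each $(-{\bf e},{\bf e})$-graded piece onto itself, so it commutes with taking initial forms. It fixes $x_1\partial_1+\cdots+x_n\partial_n$, and by the chain rule it carries ${\rm Ann}(\frac1f)$ onto ${\rm Ann}(\frac{1}{f\circ A})$. This relies on the weight ${\bf e}$ being invariant under $GL_n$, so the trick would not extend to other weights.
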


{\em Proof.} The first claim follows by Theorem \ref{uufff} and the fact that ord$_f({\bf e})={\rm ord}(f)$. 

To prove the equivalence let us suppose first that $i=0$. Notice that for $1\leq i\leq n$ we have
\[P_i:=f\partial_i+\frac{\partial f}{\partial x_i}\in{\rm Ann}(f^{-1})\]
and it is clear that if $f(0)\neq 0$ then ${\rm in}_{(-{\bf e},{\bf e})}(P_i)=f(0)\partial_i$, and hence $\partial_i\in{\rm in}_{(-{\bf e},{\bf e})}\big({\rm Ann}(f^{-1})\big)$. We deduce that $b_{{\rm Ann}(f^{-1}),{\bf e}}(s)=s$.

\vspace{2mm}

The other implication is as follows. Suppose first that $b_{{\rm Ann}(f^{-1}),{\bf e}}(s)=s$. Then
\[x_1\partial_1+\cdots+x_n\partial_n\in{\rm in}_{(-{\bf e},{\bf e})}\big({\rm Ann}(f^{-1})\big)\]
or in other words, there exists an operator $P\in D_n$ such that $P\bullet\frac{1}{f}=0$ and ${\rm in}_{(-{\bf e},{\bf e})}(P)=x_1\partial_1+\cdots+x_n\partial_n$. Then we can write the operator as
\[P=x_1\partial_1+\cdots+x_n\partial_n+\sum_{-|\alpha|+|\beta|<0}c_{\alpha\beta}{\bf x}^\alpha\partial^\beta\]
Recall that we denote $f=\sum_\gamma c_\gamma{\bf x}^\gamma$, and then
\[P\bullet\frac{1}{f}=-\frac{1}{f^2}\sum_\gamma c_\gamma|\gamma|{\bf x}^\gamma+\sum c_{\alpha\beta}{\bf x}^\alpha\partial^\beta\bullet\frac{1}{f}\]
Let $\beta^\ast\in\N^n$ such that there exists $\alpha\in\N$ with $c_{\alpha\beta^\ast}\neq 0$ and with $|\beta|$ maximum. By Lemma \ref{Tec} we can write
\[\begin{array}{rl}
P\bullet\frac{1}{f}= & -\frac{1}{f^{|\beta^\ast|+1}}\left\{f^{|\beta^\ast|-1}\sum_\gamma c_\gamma|\gamma|{\bf x}^\gamma-\right.\\
 & \left.-\sum_{|\alpha|>|\beta|}\sum_{\beta=\beta_1+\cdots+\beta_r}c_{\alpha\beta}{\bf x}^\alpha\prod_{j=1}^r\sum_{\beta_j\leq\gamma\in{\rm Supp}(f)}C_{\gamma,\beta_j}f^{|\beta^\ast|-r}{\bf x}^{\gamma-\beta_j}\right\}\\
 \\
 = & -\frac{1}{f^{|\beta^\ast|+1}}\left\{f^{|\beta^\ast|-1}\sum_\gamma c_\gamma|\gamma|{\bf x}^\gamma-\right.\\
 & \left. -\sum_{|\alpha|>|\beta|}\sum_{\beta=\beta_1+\cdots+\beta_r}\sum_{\beta_j\leq\gamma_j\in{\rm Supp}(f),\ 1\leq j\leq r}C_{\gamma,\beta,r}f^{|\beta^\ast|-r}{\bf x}^{\alpha+\gamma_1-\beta_1+\cdots+\gamma_r-\beta_r}\right\}\\
 \end{array}\]
 for certain coefficients $C_{\gamma,\beta_j}$ and
 $C_{\gamma,\beta,r}$.

Let us suppose that $f_0=0$, then
\[m:={\rm ord}(f)>0\]
Let us choose $\gamma^\ast$ any vector in Supp$(f)$ with the condition $|\gamma^\ast|=m$. Then ${\bf x}^{\gamma^\ast}$ appears in $\sum_\gamma c_\gamma|\gamma|{\bf x}^\gamma$ (notice that this is not the case when $m=0$ and hence $\gamma^\ast={\bf 0}$). Moreover in $f^{|\beta^\ast|-1}\sum c_\gamma|\gamma|{\bf x}^\gamma$ the monomial
\[\big({\bf x}^{\gamma^\ast}\big)^{|\beta^\ast|-1}{\bf x}^{\gamma^\ast}={\bf x}^{|\beta^\ast|\gamma^\ast}\]
appears, and hence it must cancel with a monomial in
\[\sum_{|\alpha|>|\beta|}\sum_{\beta=\beta_1+\cdots+\beta_r}\sum_{\beta_j\leq\gamma_j\in{\rm Supp}(f),\ 1\leq j\leq r}C_{\gamma,\beta,r}f^{|\beta^\ast|-r}{\bf x}^{\alpha+\gamma_1-\beta_1+\cdots+\gamma_r-\beta_r}\]
But these monomials are of the form
\[{\bf x}^a{\bf x}^{\alpha-\beta+\gamma_1+\cdots+\gamma_r}\]
with $a\in{\rm Supp}(f^{|\beta^\ast|-r})$ and $|\alpha|>|\beta|$. Hence
\[\begin{array}{ll}
|a+\alpha-\beta+\gamma_1+\cdots+\gamma_r| & =|a|+|\alpha|-|\beta|+|\gamma_1|+\cdots+|\gamma_r|\\
 & >|a|+|\gamma_1|+\cdots+|\gamma_r|\\
 & \geq m(|\beta^\ast|-r)+mr\\
 & =m|\beta^\ast|\\
 \end{array}\]
Hence the monomial ${\bf x}^{|\beta^\ast|\gamma^\ast}$, which has degree $m|\beta^\ast|$ can not cancel and this contradicts the fact that $P\in{\rm Ann}(\frac{1}{f})$.

\vspace{3mm}

If ord$(f)=1$ then
\[\mathcal C:=\{1\leq i\leq n\ |\ {\bf e_i}\in{\rm Supp}(f)\}\neq\emptyset\]
(recall that by $\{{\bf e_1},\ldots,{\bf e_n}\}$ we denote the canonical bases of $\R^n$). We claim that
\begin{enumerate}
\item[(i)] If $j\notin\mathcal C$, then $\partial_j\in{\rm in}_{(-{\bf e},{\bf e})}\big({\rm Ann}(f^{-1})\big)$

\item[(ii)] $\sum_{i\in\mathcal C}x_i\partial_i+1\in{\rm in}_{(-{\bf e},{\bf e})}\big({\rm Ann}(f^{-1})\big)$
\end{enumerate}
Therefore it is clear that $x_1\partial_1+\cdots+x_n\partial_n\equiv -1$ modulo ${\rm in}_{(-{\bf e},{\bf e})}\big({\rm Ann}(f^{-1})\big)$, which implies that the b-function is $b_{{\rm Ann}(f^{-1}),{\bf e}}(s)=s+1$.

\vspace{2mm}

Let us prove the claims. To prove the first claim, recall that the operator
\[\begin{array}{rl}
P_{ij} & :=\frac{\partial f}{\partial x_i}\partial_j-\frac{\partial f}{\partial x_j}\partial_i\in{\rm Ann}(f^{-1})\\
\\
& =\big(\sum_\gamma c_\gamma\gamma_i{\bf x}^{\gamma-{\bf e}_i}\big)\partial_j-\big(\sum_\gamma c_\gamma\gamma_j{\bf x}^{\gamma-{\bf e}_j}\big)\partial_i\\
\end{array}\]
where the first sum runs over $\gamma\in{\rm Supp}(f)$ such that $\gamma\geq {\bf e_i}$ and the second such that $\gamma\geq {\bf e_j}$. Notice that if we pick $j\notin\mathcal C$ and $i\in\mathcal C$, in the second sum we have $\gamma\neq {\bf e_j}$ because $j\notin\mathcal C$. Hence we deduce that ${\rm in}_{(-{\bf e},{\bf e})}\big(P_{ij}\big)=c_{{\bf e_i}}\partial_j$, since for any $\gamma\in{\rm Supp}(f)\setminus\{{\bf e_i}\}$ with $\gamma>{\bf e_i}$ we have $-\langle\gamma-{\bf e_i},{\bf e}\rangle+1<1$.

Regarding the second claim consider the operator
\[Q_i:=f\partial_i+\frac{\partial f}{\partial x_i}\in{\rm Ann}(f^{-1})\]
with $i\in\mathcal C$. Since for any $\gamma\in {\rm Supp}(f)\setminus\{{\bf e_i}\ |\ i\in\mathcal C\}$ we have $\langle\gamma,{\bf e}\rangle\geq 2$ (here we use that $f(0)=0$), we deduce that
\[{\rm in}_{(-{\bf e},{\bf e})}\big(Q_i\big)=\sum_{j\in\mathcal C}c_{{\bf e_j}}x_j\partial_i+c_{{\bf e_i}}\in{\rm in}_{(-{\bf e},{\bf e})}\big({\rm Ann}(f^{-1})\big)\]
If $|\mathcal C|=1$ we are done. Otherwise consider the operator $P_{ij}$ with $i,j\in\mathcal C$. Then, as before we have
\[{\rm in}_{(-{\bf e},{\bf e})}(P_{ij})=c_{{\bf e_i}}\partial_j-c_{{\bf e_j}}\partial_i\in{\rm in}_{(-{\bf e},{\bf e})}\big({\rm Ann}(f^{-1})\big)\]
And this, together with ${\rm in}_{(-{\bf e},{\bf e})}(Q_i)$ proves the second claim.

\vspace{3mm}

The other implication goes as follows. If $b_{{\rm Ann}(f^{-1}),{\bf e}}(s)=s+1$, then
\[x_1\partial_1+\cdots+x_n\partial_n+1\in{\rm in}_{(-{\bf 1},{\bf 1})}\big({\rm Ann}(f^{-1})\big)\]
By Lemma \ref{uy},
\[x_1\partial_1+\cdots+x_n\partial_n+1\in{\rm Ann}(f_\tau^{-1})\]
where $\tau=\tau_{{\bf 1}}$ is the face associated to the vector ${\bf e}$. Notice that since $f_\tau=\sum_\gamma c_\gamma{\bf x}^\gamma$ where the sum runs over $\gamma\in{\rm Supp}(f)$ such that $|\gamma|={\rm ord}(f)$, we have
\[\big(x_1\partial_1+\cdots+x_n\partial_n+1\big)\bullet\frac{1}{f_\tau}=\frac{-1}{f_\tau}\big({\rm ord}(f)-1\big)\]
and this vanishes if and only if ord$(f)=1$.
\hfill$\Box$

\vspace{3mm}

\begin{rem}
Notice that ord$(f)=i>1$ is not equivalent with $b_{{\rm Ann}(f^{-1}),{\bf e}}(s)=s+i$, as can be checked with the polynomial $f=x^3+y^4$. We can compute with Singular \cite{singular} that
\[{\rm Ann}(f^{-1})=D_2\langle3x^2\partial_y-4y^3\partial_x,4x\partial_x+3y\partial_y+12\rangle\]
and
\[b_{{\rm Ann}(f^{-1}),{\bf e}}(s)=\left(s+3\right)\left(s+\frac{10}{3}\right)\left(s+\frac{11}{3}\right)\]
\end{rem}

\begin{lem}
If $f$ is a homogeneous polynomial, then
\[b_{{\rm Ann}(\frac{1}{f}),{\bf e}}(s)=s+d\mbox{ if and only if }f\mbox{ has degree }d\]
\label{HomE}
\end{lem}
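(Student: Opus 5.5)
Both directions come from the Euler operator. Write $f$ homogeneous of degree $d$, so $f$ is quasi-homogeneous with weight vector ${\bf v}={\bf e}$ and $\langle{\bf e},\gamma\rangle=|\gamma|=d$ for every $\gamma\in{\rm Supp}(f)$.

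\emph{Forward direction.} Suppose $f$ has degree $d$. We invoke Proposition \ref{6enero} with ${\bf v}={\bf e}$, $\lambda=1$ and $\ell=-1$. It gives directly
\[b_{{\rm Ann}(f^{-1}),{\bf e}}(s)=s-(-1)\cdot 1\cdot d=s+d.\]
Concretely, $x_1\partial_1+\cdots+x_n\partial_n+d$ lies in ${\rm Ann}(f^{-1})$. This operator is $(-{\bf e},{\bf e})$-homogeneous, so it is its own initial form. Its image in $\C[s]$ is $s+d$, and the b-function is non-zero and of positive degree by the standing assumption of properness.

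\emph{Converse.} Suppose $b_{{\rm Ann}(f^{-1}),{\bf e}}(s)=s+d$; we must show $\deg f=d$. Let $d'$ be the actual degree of $f$. By the forward direction, the b-function is $s+d'$. Since the b-function is uniquely determined as the monic generator, $s+d=s+d'$, hence $d=d'$.

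An alternative route to the converse, in the spirit of the proof for ${\rm ord}(f)=1$, uses Lemma \ref{uy}. Here $\tau_{\bf e}$ is the whole Newton polytope, because all exponents have $|\gamma|=d'$, so $f_\tau=f$. Then $x_1\partial_1+\cdots+x_n\partial_n+d\in{\rm Ann}(f^{-1})$, while
\[(x_1\partial_1+\cdots+x_n\partial_n+d)\bullet f^{-1}=(d-d')f^{-1}.\]
This forces $d=d'$.

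I do not expect a genuine obstacle. The only care needed is to check that the hypotheses of Proposition \ref{6enero} apply: $f$ is non-constant or at least non-zero, and $\lambda=1$ with $\ell=-1$ is allowed. I also need to note that $d={\rm ord}_f({\bf e})$ coincides with $\deg f$ for homogeneous $f$. If $f$ is a non-zero constant ($d=0$), the statement reduces to Remark \ref{rem0}-type reasoning, giving $b=s$, which is consistent with the formula.
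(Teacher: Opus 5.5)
Your argument is correct and matches the intended one. The paper states Lemma~\ref{HomE} without a separate proof, as the specialization of Proposition~\ref{6enero} to ${\bf v}={\bf e}$, $\lambda=1$, $\ell=-1$: the $(-{\bf e},{\bf e})$-homogeneous Euler operator $x_1\partial_1+\cdots+x_n\partial_n+d$ lies in ${\rm Ann}(f^{-1})$, and the converse follows from uniqueness of the monic generator. Your alternative converse via Lemma~\ref{uy} with $f_{\tau_{\bf e}}=f$ mirrors the paper's own argument in the case ${\rm ord}(f)=1$ and is also valid.
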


\vspace{3mm}

Unfortunately we can not describe the b-function of Ann$(f^\ell)$ for $\ell\in\Z_{<0}$ in general. There are however some special very simple cases where we can give the b-function or at least an upper bound, i.e., a multiple.

\vspace{3mm}

\begin{lem}
 Let $f=(a_0+a_1x_1+\cdots+a_nx_n)^k$ with $k\in\N_{>0}$, we have, for any $\ell\in\Z$,
\[b_{{\rm Ann}(f^\ell),\omega}(s)=s-\ell{\rm ord}_f(\omega)\]
\label{linear}
\end{lem}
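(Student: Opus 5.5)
The case $\ell\geq 0$ is Theorem \ref{uufff}(ii), because $f^\ell$ is then a polynomial. So the work is for $\ell<0$. Write $L=a_0+a_1x_1+\cdots+a_nx_n$ and $m=k\ell\in\Z_{<0}$, so that $f^\ell=L^m$. By Lemma \ref{ell}, ${\rm ord}_f(\omega)=k\,{\rm ord}_L(\omega)$, hence $\ell\,{\rm ord}_f(\omega)=m\,{\rm ord}_L(\omega)$.

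Because the b-function is nonzero and (under our standing assumption) of positive degree, it suffices to exhibit
\[s-m\,{\rm ord}_L(\omega)\in{\rm in}_{(-\omega,\omega)}\big({\rm Ann}(L^m)\big).\]
Theorem \ref{uufff}(i) already guarantees that this value is a root, so it serves only as a consistency check. Two families of operators lie in ${\rm Ann}(L^m)$:
\[A_{ij}=a_j\partial_i-a_i\partial_j,\qquad B_i=L\partial_i-m a_i .\]
In addition, $\partial_j\in{\rm Ann}(L^m)$ whenever $a_j=0$.

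Let $S\subseteq\{0,1,\ldots,n\}$ index the support of $L$, where index $0$ corresponds to the constant $a_0$ with weight $0$, and index $j\geq 1$ has weight $\omega_j$. Let $T\subseteq S$ be the indices attaining the minimum $c:={\rm ord}_L(\omega)$; these are the exponents on the face $\tau_\omega$. The first step is to show $\partial_j\in{\rm in}_{(-\omega,\omega)}({\rm Ann}(L^m))$ for every $j\geq1$ with $j\notin T$.
\begin{itemize}
\item If $a_j=0$ this is immediate.
\item If $a_j\neq0$ and some $i\in T$ with $i\geq1$ exists, then since $\omega_j>\omega_i$ the initial form of $A_{ij}$ is $-a_i\partial_j$.
\item If $T=\{0\}$, then $c=0<\omega_j$ for all $j$ with $a_j\ne0$. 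Every term $x_k\partial_j$ of $B_j$ has weight $\omega_j-\omega_k<\omega_j$, so the initial form of $B_j$ is $a_0\partial_j$.
\end{itemize}

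Next I reduce $s$ modulo these $\partial_j$. Since $\omega_j=c$ for $j\in T$, $j\geq1$, we get
\[s\equiv c\sum_{j\in T,\,j\ge1}x_j\partial_j .\]
If $0\in T$, then $c=0$, so $s$ lies in the initial ideal and $b=s=s-mc$.

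Otherwise $0\notin T$, and I fix $i\in T$. The weights of the terms of $B_i$ are: $\omega_i-\omega_k\le 0$ for $x_k\partial_i$, with equality exactly for $k\in T$; $\omega_i=c<0$ for $a_0\partial_i$ (note $c<0$ here, since $0\in S\setminus T$ would force $c<0$, and if $a_0=0$ this term is absent); and $0$ for the constant. Hence
\[{\rm in}(B_i)=L_T\partial_i-ma_i ,\]
where $L_T=\sum_{j\in T}a_jx_j$. The initial forms of $A_{ij}$ for $i,j\in T$ are $A_{ij}$ themselves, since they are homogeneous, and the identity
\[a_i\sum_{j\in T}x_j\partial_j=L_T\partial_i+\sum_{j\in T}x_j A_{ij}\]
gives $a_i\sum_{j\in T}x_j\partial_j\equiv ma_i$. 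Since $a_i\neq0$, we get $\sum_{j\in T}x_j\partial_j\equiv m$, and so $s\equiv mc$. This yields $b_{{\rm Ann}(f^\ell),\omega}(s)=s-m c=s-\ell\,{\rm ord}_f(\omega)$.

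The main obstacle is the weight bookkeeping for ${\rm in}(B_i)$. One has to check that the constant term and the off-face terms $x_k\partial_i$ do not tie with the face terms; this is where the sign of $c$ and the case $0\in T$ versus $0\notin T$ must be handled carefully.
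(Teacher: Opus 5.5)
Your proof is correct and is essentially the paper's argument. The paper likewise reduces to $k=1$ and takes initial forms of $a_i\partial_j-a_j\partial_i$ and of the Euler-type operator $\sum_i x_i\partial_i+\frac{a_0}{a_j}\partial_j-\ell$ (which equals $\frac{1}{a_j}\bigl(B_j+\sum_i x_iA_{ij}\bigr)$ in your notation), obtaining $\partial_j$ in the initial ideal off the face and a face Euler relation on it, and then reduces $s$. Your bookkeeping via $T$ is cleaner than the paper's case split, and it works for every $m$, so citing Theorem \ref{uufff} for $\ell\ge 0$ is not needed. There is one harmless sign slip: the identity should read $a_i\sum_{j\in T}x_j\partial_j=L_T\partial_i-\sum_{j\in T}x_jA_{ij}$, which does not affect the congruence because each $x_jA_{ij}$ lies in the initial ideal.
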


{\em Proof.} Since  ${\rm ord}_{f^k}(\omega)=k{\rm ord}_f(\omega)$ it is enough to prove it for $k=1$.

Notice that for $1\leq j\leq n$
\begin{equation}
\sum_{i=1}^nx_i\partial_i+\frac{a_0}{a_j}\partial_j-\ell\in{\rm Ann}(f^\ell)
\label{eqast1}
\end{equation}
and for $1\leq i<j\leq n$
\begin{equation}
a_i\partial_j-a_j\partial_i\in{\rm Ann}(f^\ell)
\label{eqast2}
\end{equation}
Suppose that $\omega_j\geq 0$ for $1\leq j\leq n$ and $a_0\neq 0$, then ${\rm ord}_f(\omega)=0$ and we are done since by (\ref{eqast1}), we have that $\partial_j\in{\rm in}_{(-\omega,\omega)}\big({\rm Ann}(f^\ell)\big)$ for $1\leq j\leq n$.

Otherwise, let us define the set
\[I_\omega=\{1\leq i\leq n\ |\ \omega_i={\rm ord}_f(\omega)\}\]
We have that $I_\omega\neq\emptyset$. Notice that either there exists $\omega_j<0$ or $a_0=0$, and in both cases we have
\begin{equation}
\sum_{i=1}^nx_i\partial_i-\ell\in{\rm in}_{(-\omega,\omega)}\big({\rm Ann}(f^\ell)\big)
\label{eqast3}
\end{equation}
By (\ref{eqast2}) we deduce that if $i\notin I_\omega$ then $\partial_i\in{\rm in}_{(-\omega,\omega)}\big({\rm Ann}(f^\ell)\big)$. Then, by (\ref{eqast3}) we have that $\sum_{i\in I_\omega}x_i\partial_i-\ell\in{\rm in}_{(-\omega,\omega)}\big({\rm Ann}(f^\ell)\big)$ and hence
\[\begin{array}{ll}
s & =\omega_1x_1\partial_1+\cdots+\omega_nx_n\partial_n\\
 & \equiv \sum_{i\in I_\omega}\omega_ix_i\partial_i\mbox{ mod }{\rm in}_{(-\omega,\omega)}\big({\rm Ann}(f^\ell)\big)\\
 & = {\rm ord}_f(\omega)\sum_{i\in I_\omega}x_i\partial_i\\
 & \equiv \ell{\rm ord}_f(\omega)\\
 \end{array}\]
\hfill$\Box$

\vspace{3mm}

\begin{lem}
Let  $f=(x_1-a_1)^{\alpha_1}(x_2-a_2)^{\alpha_2}\cdots(x_n-a_n)^{\alpha_n}$, with $\alpha=(\alpha_1,\ldots,\alpha_n)\in\N^n$. Then, for any $\ell\in\Z$
\[b_{{\rm Ann}(f^\ell),\omega}(s)=s-\ell{\rm ord}_f(\omega)\]
\label{NC}
\end{lem}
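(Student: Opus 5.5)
We use the explicit first-order annihilators of $f^\ell$ and show directly that $s-\ell{\rm ord}_f(\omega)$ lies in ${\rm in}_{(-\omega,\omega)}\big({\rm Ann}(f^\ell)\big)$. Since the b-function is nonzero and monic, and by Theorem \ref{uufff}(i) the value $\ell{\rm ord}_f(\omega)$ is a root, this forces $b_{{\rm Ann}(f^\ell),\omega}(s)=s-\ell{\rm ord}_f(\omega)$. As in Lemma \ref{linear}, we first pass to the simplest building blocks, though here the factors are in distinct variables, so we treat them simultaneously.

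For each $i$ with $\alpha_i>0$, the operator
\[
(x_i-a_i)\partial_i-\ell\alpha_i
\]
annihilates $f^\ell$, since $\partial_i\bullet f^\ell=\ell\alpha_i f^\ell/(x_i-a_i)$ on the appropriate localization. For $\alpha_i=0$, $\partial_i\in{\rm Ann}(f^\ell)$. We split into cases according to the sign of $\omega_i$ and whether $a_i=0$.

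\textbf{Case $\alpha_i=0$.} We have $\partial_i$ in the initial ideal, and the index contributes $0$.

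\textbf{Case $\alpha_i>0$, $a_i\neq0$, $\omega_i>0$.} The initial form is $-a_i\partial_i$, so $\partial_i$ lies in the initial ideal and $\omega_ix_i\partial_i\equiv 0$. This matches the fact that the factor $(x_i-a_i)^{\alpha_i}$ has Newton polytope the segment $[0,\alpha_ie_i]$, whose minimum of $\langle\cdot,\omega\rangle$ is $0$.

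\textbf{Case $\alpha_i>0$, $a_i\neq0$, $\omega_i<0$.} The initial form is $x_i\partial_i-\ell\alpha_i$, so $\omega_ix_i\partial_i\equiv\ell\alpha_i\omega_i$. The minimum over the segment is $\alpha_i\omega_i$.

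\textbf{Case $\alpha_i>0$, $\omega_i=0$.} The operator is homogeneous, and $\omega_ix_i\partial_i=0$ contributes nothing, matching the minimum $0$.

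\textbf{Case $\alpha_i>0$, $a_i=0$.} The operator $x_i\partial_i-\ell\alpha_i$ is homogeneous and gives the contribution $\ell\alpha_i\omega_i$, matching the minimum $\alpha_i\omega_i$.

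To assemble the cases, note that $\Gamma(f)$ is the Minkowski sum of the segments (or points) $\Gamma\big((x_i-a_i)^{\alpha_i}\big)$, which lie in the orthogonal coordinate directions. Hence
\[
{\rm ord}_f(\omega)=\sum_i{\rm ord}_{(x_i-a_i)^{\alpha_i}}(\omega_i),
\]
where each summand is $\min(0,\alpha_i\omega_i)$ if $a_i\ne0$, and $\alpha_i\omega_i$ if $a_i=0$. Summing the congruences from the cases modulo the initial ideal gives
\[
s=\sum_i\omega_ix_i\partial_i\equiv\ell{\rm ord}_f(\omega).
\]

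The main point needing care is that each of these operators really lies in ${\rm Ann}(f^\ell)$ for every $\ell\in\Z$, including negative $\ell$ where $f^\ell$ is rational. This is immediate from the product rule because each factor involves a single variable. The remaining bookkeeping consists of checking that the initial forms are exactly as stated, which uses only that $u+v$ weights cancel on $x_i\partial_i$.
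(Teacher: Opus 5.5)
Your proposal is correct and follows essentially the paper's own argument. You use the same annihilators $(x_i-a_i)\partial_i-\ell\alpha_i$ and $\partial_i$, split into the same cases on the sign of $\omega_i$ and on whether $a_i=0$, and reduce $s$ to the constant $\ell\,{\rm ord}_f(\omega)$ modulo the initial ideal. The differences are minor: you compute ${\rm ord}_f(\omega)$ via the Minkowski sum of coordinate segments instead of listing monomials, and you only need these operators to lie in ${\rm Ann}(f^\ell)$, using Theorem \ref{uufff}(i) to rule out $b=1$, rather than the paper's claim that they generate the ideal.
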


{\em Proof.}
It is straightforward to check that the ideal Ann$(f^\ell)$ is generated by
\[\left\{\begin{array}{cl}
(x_i-a_i)\partial_i-\ell\alpha_i & \mbox{ if }\alpha_i\neq 0\\
\partial_i & \mbox{ if }\alpha_i=0\\
\end{array}\right.\]
for $1\leq i\leq n$. Let $\omega\in\R^n\setminus\{\bf 0\}$, then we deduce that for any $\mathcal G$ Gröbner basis of in$_{(-\omega,\omega)}\big({\rm Ann}(f^\ell)\big)$, we have that if $\alpha_i=0$ then $\partial_i\in\mathcal G$, and if $\alpha_i\neq 0$ then:
\[\begin{array}{cl}
\partial_i\in\mathcal G &\mbox{ if }a_i\neq 0\mbox{ and }\omega_i>0\\
x_i\partial_i-\ell\alpha_i\in\mathcal G & \mbox{ if }a_i=0\mbox{ and }\omega_i>0\\
x_i\partial_i-\ell\alpha_i\in\mathcal G & \mbox{ if }\omega_i<0\\
(x_i-a_i)\partial_i-\ell\alpha_i\in\mathcal G & \mbox{ if }\omega_i=0\\
\end{array}\]
Therefore
\[\begin{array}{rl}
NF(s,\mathcal G) & =NF(\omega_1x_1\partial_1+\cdots+\omega_nx_n\partial_n)\\
 & =\sum_{\omega_i>0}NF(\omega_ix_i\partial_i,\mathcal G)+\sum_{\omega_i<0}NF(\omega_ix_i\partial_i,\mathcal G)\\
 & =\sum_{\omega_i>0,\ a_i=0}\ell\alpha_i\omega_i+\sum_{\omega_i<0}\ell\alpha_i\omega_i\\
 \end{array}\]
To finish notice that the monomials of $f$ are of the form
 \[C_{j_1,\ldots,j_n}x_1^{j_1}\cdots x_n^{j_n}\]
 for certain coefficient $C_{j_1,\ldots,j_n}$, and where
 \[\left\{\begin{array}{cl}
 0\leq j_i\leq \alpha_i & \mbox{ if }a_i\neq 0\\
 \\
 j_i=\alpha_i & \mbox{ if }a_i=0\\
 \end{array}\right.\]
 Then
 \[\begin{array}{rl}
 {\rm ord}_f(\omega) & :={\rm min}\{\langle\omega,\alpha\rangle\ |\ \alpha\in{\rm Supp}(f)\}\\
  & =\langle\omega,{\bf j}\rangle\\
  \end{array}\]
  where ${\bf j}\in\Z^n$ is defined by
  \[j_i=\left\{\begin{array}{cl}
  0 & \mbox{ if }\omega_i\geq 0\mbox{ and }a_i\neq 0\\
  \\
  \alpha_i & \mbox{ otherwise }\\
  \end{array}\right.\]
  and the result follows.
\hfill$\Box$

\vspace{2mm}

Can we characterize when the b-function of Ann$(f^{-1})$ has global degree one? That is, when $b_{{\rm Ann}(f^{-1}),\omega}(s)=s-{\rm ord}_f(\omega)$ for any $\omega\in\R^n\setminus\{\bf 0\}$. It is not a property of divisors of linear differential type, as we can check in the following example.

\begin{Exam}
Consider the function $f=xy(x+y)$ defining a hyperplane arrangement. We have
\[{\rm Ann}(f^{-1})=D_2\left(x\partial_x+y\partial_y+3,(xy+y^2)\partial_y+x+2y\right)\]
and $b_{{\rm Ann}(f^{-1}),(1,-1)}(s)=(s-1)(s+1)$.
\end{Exam}

\subsubsection{The case when $f\in\C[x,y]$ is a homogeneous polynomial}

We deal in this section with a polynomial
\[f=\alpha_0x^d+\alpha_1x^{d-1}y+\cdots+\alpha_dy^d\]
homogeneous of degree $d$. We denote
\[R_d=\{(i,j)\in\N^2\ |\ i+j=d\}\]
Then Supp$(f)\subseteq R_d$.

The commutative ring $\C[\theta]=\C[\theta_1,\theta_2]$ where $\theta_i=x_i\partial_i$ can be embed in $D_2$ via the formula
\begin{equation}
\partial_i^mx_i^m=(\theta_i+1)(\theta_i+2)\cdots(\theta_i+m)
\label{eqT}
\end{equation}

\begin{lem}
Let $k\in\{1,2\}$ and $j\leq i,d\in\N$. Consider the ideal $$I=\C[\theta_1,\theta_2]\langle\theta_1+\theta_2+d,(\theta_k+j)(\theta_k+j+1)\cdots(\theta_k+i)\rangle$$
 The ideal $D_2I$ is holonomic and
\[b_{D_2I,\omega}(s)=\left(\prod_{k=0}^{i-j}\big(s+\omega_1(d-i+k)+\omega_2(i-k)\big)\right)_{red}\]
for every $\omega\in\R^2\setminus\{\bf 0\}$.
\label{techEnero}
\end{lem}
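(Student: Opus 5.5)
The plan is to exploit that both generators of $I$ are torus invariants, so that everything reduces to a computation in the commutative ring $\C[\theta_1,\theta_2]$. Note first that $D_2$ is graded by $\Z^2$: the monomial ${\bf x}^\alpha\partial^\beta$ has degree $\alpha-\beta$. The degree-zero part of this grading is exactly $\C[\theta_1,\theta_2]$, by (\ref{eqT}) and the fact that $x_i^m\partial_i^m$ is a polynomial in $\theta_i$. For every $\omega\in\R^2\setminus\{\bf 0\}$, the $(-\omega,\omega)$-weight of a homogeneous element of degree $\alpha-\beta$ is $-\langle\omega,\alpha-\beta\rangle$, so this $\Z^2$-grading refines the $(-\omega,\omega)$-filtration.

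\textbf{Step 1: the initial ideal is the ideal itself.} Both generators $\theta_1+\theta_2+d$ and $(\theta_k+j)\cdots(\theta_k+i)$ have degree $0$, hence $D_2I$ is a $\Z^2$-graded left ideal. Take $P\in D_2I$ and decompose it into homogeneous components; each component again lies in $D_2I$. The initial form ${\rm in}_{(-\omega,\omega)}(P)$ is the sum of those homogeneous components of $P$ with maximal $(-\omega,\omega)$-weight, so it lies in $D_2I$. Conversely, every generator is its own initial form. Therefore ${\rm in}_{(-\omega,\omega)}(D_2I)=D_2I$ for every $\omega$, and the b-function is the monic generator of $D_2I\cap\C[s]$ with $s=\omega_1\theta_1+\omega_2\theta_2$.

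\textbf{Step 2: the degree-zero part of $D_2I$ is $I$.} If $\sum_m P_m g_m\in D_2I$ is homogeneous of degree $0$, with the $g_m$ among the generators, then replacing each $P_m$ by its degree-$0$ component gives the same element. Since those components lie in $\C[\theta]$, we get $D_2I\cap\C[\theta_1,\theta_2]=I$. In particular $1\notin D_2I$, since $I\neq\C[\theta]$ (the quotient is nonzero, see Step 3), so $D_2I$ is proper. It also follows that $D_2I\cap\C[s]=I\cap\C[s]$.

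\textbf{Step 3: computing $I\cap\C[s]$.} Modulo $\theta_1+\theta_2+d$ we substitute $\theta_{k'}=-d-\theta_k$, where $k'$ is the other index. Then $\C[\theta]/I\cong\C[\theta_k]/\big((\theta_k+j)\cdots(\theta_k+i)\big)$. This ring is reduced, being a product of $i-j+1$ copies of $\C$, one for each point $\theta_k=-m$ with $j\le m\le i$; at that point $\theta_{k'}=m-d$. The element $s$ acts on the factor for $m$ by multiplication by its value there. The minimal polynomial of $s$ is therefore the product of the distinct linear factors $s-s(m)$, over the values $s(m)$ of $\omega_1\theta_1+\omega_2\theta_2$ at these points. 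Reindexing $m=i-k$ (and using the symmetry of the roles of the coordinates when $k=2$), these values are the roots of the factors $s+\omega_1(d-i+k)+\omega_2(i-k)$, $0\le k\le i-j$, in the statement. Distinct points may give the same value of $s$, which is why the reduced product appears. The only care needed is bookkeeping of signs and indices in matching with the stated formula, which I expect to be the most error-prone step, though not conceptually hard.

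\textbf{Step 4: holonomicity.} The principal symbols, with respect to $({\bf 0},{\bf e})$, of the two generators are $x_1\xi_1+x_2\xi_2$ and $(x_k\xi_k)^{i-j+1}$. Hence ${\rm Char}(D_2I)$ is contained in $\{x_1\xi_1=x_2\xi_2=0\}$, a union of four planes of dimension $2$. Since $D_2I$ is proper by Step 2, the theorem of \cite{SKK} gives dimension at least $2$, so $D_2I$ is holonomic.
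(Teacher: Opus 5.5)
Your proof is correct, and it takes a different route from the paper.

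\textbf{How the two routes differ.} The paper declares $D_2I$ torus-fixed, using Lemma 2.3.1 of \cite{SST} as the definition. It then applies an affine substitution $T$ of the $\theta$-coordinates that turns $I$ into an ideal generated by $\theta_1$ and a product of factors in $\theta_2$. Finally it quotes Theorem 3.30 of \cite{Cobo}, reading the roots off the standard monomials $\{(0,0),\ldots,(0,i-j)\}$. For holonomicity it simply asserts an equality for ${\rm in}_{({\bf 0},{\bf e})}(D_2I)$.

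You instead derive from scratch, via the $\Z^2$-grading, the two facts that the torus-fixed machinery packages: ${\rm in}_{(-\omega,\omega)}(D_2I)=D_2I$ and $D_2I\cap\C[\theta]=I$. After that, $b_{D_2I,\omega}$ is just the minimal polynomial of $\omega_1\theta_1+\omega_2\theta_2$ on the reduced algebra $\C[\theta]/I\cong\C^{i-j+1}$.

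\textbf{What each approach buys.} Your version is self-contained and explains directly why a reduced product appears: distinct points of $V(I)$ can give the same value of $s$. It is visibly insensitive to affine changes of the $\theta$-coordinates such as $T$. It also works verbatim for any $J\subseteq\C[\theta]$ with $\C[\theta]/J$ finite-dimensional and reduced. Your holonomicity argument needs only the inclusion ${\rm Char}(D_2I)\subseteq\{x_1\xi_1=x_2\xi_2=0\}$, together with properness and \cite{SKK}, rather than an unproved equality of characteristic ideals. The paper's route gives access to a general theorem that also covers non-reduced torus-fixed ideals, which is not needed here.

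\textbf{A correction to your Step 3.} Your direct computation reproduces the displayed formula exactly when $k=2$. There $\theta_2=-m$ and $\theta_1=m-d$, so the factor is $s+\omega_1(d-m)+\omega_2 m$, and $m=i-k$ matches the display.

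When $k=1$ you get the factors $s+\omega_1 m+\omega_2(d-m)$ for $j\le m\le i$. This is the displayed formula with $\omega_1$ and $\omega_2$ interchanged, and the two root sets agree for generic $\omega$ only when $i+j=d$. For example, $d=3$, $i=j=1$, $k=1$ gives $b=s+\omega_1+2\omega_2$, not $s+2\omega_1+\omega_2$.

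So your remark about ``using the symmetry when $k=2$'' has it backwards. The statement itself is literally correct only for $k=2$; the paper glosses over this by calling the case $k=1$ ``completely analogous''. You should state the $k=1$ case with $\omega_1,\omega_2$ swapped. This is harmless for Proposition \ref{PropEnero}, where $j=0$ and $i=d$, so the two root sets coincide.
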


{\em Proof.} The ideal $D_2I$ is holonomic, since in$_{(0,{\bf e})}(D_2I)=D_2\langle x_1\partial_1+x_2\partial_2,x_k^{i-j+1}\partial_k^{i-j+1}\rangle$ and hence Char$(I)$ has all components of dimension 2.

In order to apply now some results of \cite{Cobo}, we need the concept of torus-fixed ideal in $D_n$, which has not been defined in the text. We use Lemma 2.3.1 in \cite{SST} as an alternative definition: an ideal in $D_n$ is torus-fixed if and only if it is generated by elements of the form ${\bf x}^ap({\bf x}\partial)\partial^b$ for certain $a,b\in\N^n$ and a polynomial $p$. As a consequence, for any ideal $J\subseteq\C[\theta]$, the ideal $D_nJ$ is torus-fixed. And in particular the ideal $D_2I$ we are dealing with is a torus-fixed ideal.

We can assume that $k=2$ to simplify the notation. The case $k=1$ is completely analogous. Let us consider the transformation $T:\C[\theta_1,\theta_2]\longrightarrow\C[\theta_1,\theta_2]$ given by
\[\left\{\begin{array}{l}
\theta_1\ \mapsto\ \theta_1-\theta_2-d+i\\
\theta_2\ \mapsto\ \theta_2-i\\
\end{array}\right.\]
Then $T(I)=\C[\theta_1,\theta_2]\langle\theta_1,\theta_2(\theta_2-1)\cdots(\theta_2-i+j+1)\rangle$ and by Theorem 3.30 in \cite{Cobo} the result follows, since $T(\omega_1\theta_1+\omega_2\theta_2)=\omega_1\theta_1+(\omega_2-\omega_1)\theta_2-\omega_1d-(\omega_2-\omega_1)i$ and $R={\rm std}(\C[\theta_1,\theta_2]\langle\theta_1,\theta_2^{i-j+1}\rangle)=\{(0,0),(0,1),\ldots,(0,i-j)\}$.
\hfill$\Box$

\vspace{2mm}

\begin{prop}
Let $f\in\C[x,y]$ be a homogeneous polynomial of degree $d$. Then, for any $\omega\in\R^2\setminus\{\bf 0\}$, the b-function $b_{{\rm Ann}(\frac{1}{f}),\omega}(s)$ is a divisor of
\[\left(\prod_{i=0}^d\big(s+i\omega_1+(d-i)\omega_2\big)\right)_{red}\]
\label{PropEnero}
\end{prop}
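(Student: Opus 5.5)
The plan is to reduce to Lemma \ref{techEnero} by exhibiting, inside ${\rm in}_{(-\omega,\omega)}\big({\rm Ann}(\frac{1}{f})\big)$, a torus-fixed ideal of the form $D_2I$ with $I=\C[\theta_1,\theta_2]\langle\theta_1+\theta_2+d,\,(\theta_k+j)\cdots(\theta_k+i)\rangle$. If $D_2I\subseteq{\rm in}_{(-\omega,\omega)}\big({\rm Ann}(\frac1f)\big)$, then any $b(s)\in D_2I\cap\C[s]$ also lies in ${\rm in}_{(-\omega,\omega)}\big({\rm Ann}(\frac1f)\big)\cap\C[s]$. Hence $b_{{\rm Ann}(1/f),\omega}$ divides $b_{D_2I,\omega}$. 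Lemma \ref{techEnero} with $j=0$, $i=d$ and $k=2$ gives $b_{D_2I,\omega}(s)=\big(\prod_{k=0}^{d}(s+k\omega_1+(d-k)\omega_2)\big)_{red}$, which after reindexing is exactly the stated product. So it suffices to put both generators of $I$, for these parameters, in the initial ideal.

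The first generator is immediate. Since $f$ is homogeneous of degree $d$, the computation in the proof of Proposition \ref{6enero} shows that $E:=x\partial_x+y\partial_y+d\in{\rm Ann}(\frac1f)$. Moreover $E$ is $(-\omega,\omega)$-homogeneous, so $E={\rm in}_{(-\omega,\omega)}(E)$. As a side remark, when $\omega_1=\omega_2$ we get $s=\omega_1(\theta_1+\theta_2)\equiv-d\omega_1$, and this case is already done.

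The main step, and the main obstacle, is to produce an operator $Q\in{\rm Ann}(\frac1f)$ with ${\rm in}_{(-\omega,\omega)}(Q)\equiv\theta_2(\theta_2+1)\cdots(\theta_2+d)$ modulo $D_2E$ (or the analogous product in $\theta_1$). Possibly a smaller interval $(\theta_2+j)\cdots(\theta_2+i)$ suffices, with $j$ the $y$-order and $i$ the $y$-degree of $f$; its roots are a subset of the target product, so it would still give a divisor. Assume $\omega_1\neq\omega_2$, say $\omega_2>\omega_1$; the other case is symmetric, exchanging the roles of $x$ and $y$.

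I would try first the following. Dehomogenize by $t=y/x$, so that $\frac1f=x^{-d}\frac{1}{p(t)}$ with $\deg p\le d$. Modulo $E$, the operator $\theta_2$ acts on $\frac1f$ as $t\frac{d}{dt}$ acts on $\frac1p$. Start from the annihilators $f\partial_y+f_y$ and $f_x\partial_y-f_y\partial_x$, and multiply by the power $y^{d}$ needed to rewrite $x^{a}y^{d-a}\partial_y^{\,m}$ in terms of $\partial_y^{m}y^{m}=(\theta_2+1)\cdots(\theta_2+m)$ via (\ref{eqT}). Then eliminate $x\partial_x$ using $E$. The $(-\omega,\omega)$-leading part of the result should come from the extreme monomial of $f$ selected by $\omega$ (the vertex $\tau_\omega$ of $\Gamma(f)$). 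That leading part should be a polynomial in $\theta_2$ whose roots are shifts $-k$ with $0\le k\le d$, these shifts arising from the exponents $(d-k,k)\in R_d\cap{\rm Supp}(f)$.

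As a consistency check I would use Lemma \ref{uy}. Any such leading form must annihilate $1/f_\tau$, where $f_\tau$ is a monomial $x^{d-k}y^{k}$. This forces the root $-k$ to appear, and it is compatible with Theorem \ref{uufff}(i). If the direct construction of $Q$ proves unwieldy, the fallback is to compute a Gröbner basis of ${\rm Ann}(\frac1f)$ symbolically in terms of the coefficients $\alpha_0,\ldots,\alpha_d$, exploiting the two-variable homogeneous structure. From it I would read off that the initial ideal contains $E$ and a $\theta_2$-polynomial dividing $\prod_{k=0}^d(\theta_2+k)$.
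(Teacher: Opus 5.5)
Your reduction to Lemma \ref{techEnero} (with $j=0$, $i=d$) matches the paper's proof, and so does the observation that the Euler operator $\theta_1+\theta_2+d$ lies in the initial ideal. The gap is exactly the step you flag as the main obstacle. You never exhibit an element of ${\rm in}_{(-\omega,\omega)}\big({\rm Ann}(\frac1f)\big)\cap\C[\theta]$ that, modulo the Euler operator, divides $\theta_k(\theta_k+1)\cdots(\theta_k+d)$.

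The heuristic you sketch does not produce one. Multiplying by $y^d$ cannot turn $x^ay^{d-a}\partial_y^m$ into an element of $\C[\theta]$; you have to multiply on the left by powers of $\partial_x,\partial_y$ and use $\partial^mx^m=(\theta+1)\cdots(\theta+m)$. A ``symbolic Gr\"obner basis in terms of $\alpha_0,\ldots,\alpha_d$'' is not an argument either.

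The missing observation is elementary. Write $f\partial_y+f_y=\sum_i\alpha_i\big(x^{d-i}y^i\partial_y+i\,x^{d-i}y^{i-1}\big)$. The $i$-th summand is $(-\omega,\omega)$-homogeneous of weight $-d\omega_1+\omega_2+i(\omega_1-\omega_2)$. So for $\omega_1\neq\omega_2$ the initial form is a single extreme summand. For $\omega_1>\omega_2$ it is $\alpha_e x^{d-e}y^{e-1}(\theta_2+e)$, where $e$ is the $y$-degree of $f$. When $\omega_1<\omega_2$, use $f\partial_x+f_x$ symmetrically.

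Even with this operator you must control multiplicities, and your proposal does not address this. Knowing that the roots lie among $-k$, $0\le k\le d$, is not enough: a repeated root only shows that $b$ divides a non-reduced product, which is weaker than the statement. Done naively, the construction gives $\partial_x^{d-e}\partial_y^{e-1}\cdot x^{d-e}y^{e-1}(\theta_2+e)=(\theta_1+1)\cdots(\theta_1+d-e)(\theta_2+1)\cdots(\theta_2+e)$. Modulo $\theta_1+\theta_2+d$ this has a double root at $\theta_2=-e$ whenever $e<d$. For example, $f=xy(x+y)$ gives $-(\theta_2+1)(\theta_2+2)^2$.

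The paper avoids this by first dividing $f\partial_x+f_x$ and $f\partial_y+f_y$ by their monomial content $x^{a}y^{b}$. This is allowed because $x^ay^bP\in{\rm Ann}(\frac1f)$ forces $P\in{\rm Ann}(\frac1f)$. Only then does it take initial forms, multiply on the left by powers of $\partial_x,\partial_y$, and eliminate the other $\theta$ with the Euler operator. The result is a product of pairwise distinct factors $\theta_k+c$ with $0\le c\le d$; for $f=xy(x+y)$ it is $(\theta_2+1)(\theta_2+2)$. This product divides $\theta_k(\theta_k+1)\cdots(\theta_k+d)$, so Lemma \ref{techEnero} applies. Your proposal identifies neither this operator nor the need for a squarefree $\theta_k$-polynomial.
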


{\em Proof.} We denote
\[B(s)=\left(\prod_{i=0}^d\big(s+i\omega_1+(d-i)\omega_2\big)\right)_{red}\]
The cases $f=\alpha x^iy^{d-i}$ and $f=(ax+by)^k$ have already been treated in Lemma \ref{NC} and Lemma \ref{linear}.

If $\omega_1=\omega_2$, then as a consequence of Lemma \ref{HomE} we have that $b_{{\rm Ann}(\frac{1}{f}),\omega}(s)=s+\omega_1d$ which coincides with $B(s)$.

The idea of the proof when $\omega_1\neq\omega_2$ relies on the following facts:

\begin{enumerate}
\item[i)] For any holonomic ideal $I\subseteq D_n$ and any $\omega \in\R^n\setminus\{\bf 0\}$
\[{\rm in}_{(-\omega,\omega)}(I)\cap\C[\omega_1x_1\partial_1+\cdots+\omega_nx_n\partial_n]=\left({\rm in}_{(-\omega,\omega)}(I)\cap\C[\theta]\right)\cap\C[\omega_1\theta_1+\cdots+\omega_n\theta_n]\]

\item[ii)] For any two holonomic ideal $I\subseteq J\subseteq D_n$, and any $\omega\in\R^n\setminus\{\bf 0\}$, we have that the b-function $b_{J,\omega}(s)$ divides $b_{I,\omega}(s)$.

\end{enumerate}
Hence we have to prove that
\[C[\theta]\langle\theta_1+\theta_2+d,\theta_k(\theta_k+1)\cdots(\theta_k+d)\rangle\subseteq{\rm in}_{(-\omega,\omega)}\big({\rm Ann}(\frac{1}{f})\big)\cap\C[\theta]\]
for $k\in\{1,2\}$, and then the result follows by Lemma \ref{techEnero}.

For any $\omega\in\R^2\setminus\{\bf 0\}$ it is clear that $\theta_1+\theta_2+d\in{\rm in}_{(-\omega,\omega)}\big({\rm Ann}(\frac{1}{f})\big)\cap\C[\theta_1,\theta_2]$. The rest of the proof is devoted to prove that the operator $\theta_k(\theta_k+1)\cdots(\theta_k+d)$ belongs to the ideal ${\rm in}_{(-\omega,\omega)}\big({\rm Ann}(\frac{1}{f})\big)\cap\C[\theta_1,\theta_2]$ for $k\in\{1,2\}$. For this we introduce the following notation:
\[P_x=f\partial_x+\frac{\partial f}{\partial_x}\mbox{ and }P_y=f\partial_y+\frac{\partial f}{\partial_y}\]
We have that $P_x,P_y\in{\rm Ann}\big(\frac{1}{f}\big)$ and we write
\[P_x=x^{a_1}y^{b_1}P_{0,x}\mbox{ and }P_y=x^{a_2}y^{b_2}P_{0,y}\]
We have that
\[P_{0,x},P_{0,y}\in{\rm Ann}\big(\frac{1}{f}\big)\]
and, since $f=\alpha_0x^d+a_1x^{d-1}y+\cdots+a_dy^d$ we have
\[P_{0,x}=\sum_i\alpha_i\left(x^{d-a_1-i}y^{i-b_1}\partial_x+(d-i)x^{d-a_1-i-1}y^{i-b_1}\right)\]
and
\[P_{0,y}=\sum_i\alpha_i\left(x^{d-a_2-i}y^{i-b_2}\partial_y+ix^{d-a_2-i-}y^{i-b_2-1}\right)\]
For $\omega\in\R^2\setminus\{\bf 0\}$ with $\omega_1\neq\omega_2$, we have
\[{\rm in}_{(-\omega,\omega)}(P_{0,x})=\alpha_{i^*}\left(x^{d-a_1-i^*}y^{i^*-b_1}\partial_x+(d-i^*)x^{d-a_1-i^*-1}y^{i^*-b_1}\right)\]
and
\[{\rm in}_{(-\omega,\omega)}(P_{0,y})=\alpha_{i^*}\left(x^{d-a_2-i^*}y^{i^*-b_2}+i^*x^{d-a_2-i^*}y^{i^*-b_2-1}\right)\]
where
\[i^*=\left\{\begin{array}{ll}
{\rm max}\{0\leq i\leq d\ |\ \alpha_i\neq 0\} & \mbox{ if }\omega_1>\omega_2\\
\\
{\rm min}\{0\leq i\leq d\ |\ \alpha_i\neq 0\} & \mbox{ if }\omega_1<\omega_2\\
\end{array}\right.\]
Notice that if $\omega_1<\omega_2$ then $i^*<d$ while if $\omega_1>\omega_2$ then $i^*>0$, because otherwise $f$ would be a monomial.

\vspace{2mm}

We distinguish cases:

\vspace{2mm}

$\bullet$ If $\alpha_0=0$. Then $b_1>0$. We have proved that
\[Q=x^{d-a_1-i^*}y^{i^*-b_1}\partial_x+(d-i^*)x^{d-a_1-i^*-1}y^{i^*-b_1}\in{\rm in}_{(-\omega,\omega)}\big({\rm Ann}(\frac{1}{f})\big)\]
Hence $\bar Q=\partial_x^{d-a_1-i^*-1}\partial_y^{i^*-b_1}Q\in{\rm in}_{(-\omega,\omega)}\big({\rm Ann}(\frac{1}{f})\big)\cap\C[\theta]$. Indeed, by equation (\ref{eqT}) it follows that
\[\bar Q=(\theta_1+1)(\theta_1+2)\cdots(\theta_1+d-a_1-i^*-1)(\theta_1+d-i^*)(\theta_2+1)\cdots(\theta_2+i^*-b_1)\]
Eliminating $\theta_2$ in $\bar Q$ using the Euler operator $\theta_1+\theta_2+d$ we deduce that the operator
\[(\theta_1+1)(\theta_1+2)\cdots(\theta_1+d-a_1-i^*-1)(\theta_1+d-i^*)(\theta_1+d+b_1-i^*)\cdots(\theta_1+d-1)\]
belongs to the ideal $\C[\theta]\langle\theta_1+\theta_2+d,\bar Q\rangle\subseteq{\rm in}_{(-\omega,\omega)}\big({\rm Ann}(\frac{1}{f})\big)\cap\C[\theta]$. Since $a_1\geq 0$ and $b_1>0$ we finally deduce that
\[\C[\theta]\langle\theta_1+\theta_2+d,\theta_1(\theta_1+1)\cdots(\theta_1+d)\rangle\subseteq{\rm in}_{(-\omega,\omega)}\big({\rm Ann}(\frac{1}{f})\big)\cap\C[\theta]\]

\vspace{2mm}

$\bullet$ If $\alpha_d=0$. Then $a_2>0$ and we have to mimic the proof of the previous case to the operator
\[Q=x^{d-i^*-a_2}y^{i^*-b_2}\partial_y+i^*x^{d-i^*-a_2}y^{i^*-b_2-1}\in{\rm in}_{(-\omega,\omega)}\big({\rm Ann}(\frac{1}{f})\big)\]
to deduce that
\[\C[\theta]\langle\theta_1+\theta_2+d,\theta_1(\theta_1+1)\cdots(\theta_1+d)\rangle\subseteq{\rm in}_{(-\omega,\omega)}\big({\rm Ann}(\frac{1}{f})\big)\cap\C[\theta]\]

\vspace{2mm}

$\bullet$ If $\alpha_0\neq 0$ and $\alpha_d\neq 0$. Then $a_1=b_1=a_2=b_2=0$ and
\[i^*=\left\{\begin{array}{cl}
d & \mbox{ if }\omega_1>\omega_2\\
0 & \mbox{ if }\omega_1<\omega_2\\
\end{array}\right.\]
We have proved that $x_k^d\partial_k+dx_k^{d-1}\in{\rm in}_{(-\omega,\omega)}\big({\rm Ann}(\frac{1}{f})\big)$, where
\[k=\left\{\begin{array}{cl}
2 & \mbox{ if }\omega_1>\omega_2\\
1 & \mbox{ if }\omega_1<\omega_2\\
\end{array}\right.\]
Then
\[\bar Q=\partial_k^{d-1}\big(x_k^d\partial_k+dx_k^{d-1}\big)\in{\rm in}_{(-\omega,\omega)}\big({\rm Ann}(\frac{1}{f})\big)\cap\C[\theta]\]
and since $\bar Q=\partial_k^{d-1}x_k^{d-1}\big(x_k\partial_k+d\big)=(\theta_k+1)\cdots(\theta_k+d)$, we deduce that
\[\theta_k(\theta_k+1)\cdots(\theta_k+d)\in{\rm in}_{(-\omega,\omega)}\big({\rm Ann}(\frac{1}{f})\big)\cap\C[\theta]\]
as we wanted to prove.
\hfill$\Box$

\vspace{3mm}

\begin{rem}
Notice that if we denote $R_d=\{(i,j)\in\N^2\ |\ i+j=d\}$, then we can write the polynomial $B(s)$ as
\[B(s)=\prod_{{\bf k}\in R_d}\big(s+\langle\omega,{\bf k}\rangle\big)\]
On the other hand we have that Supp$(f)\subseteq R_d$. However the polynomial $\prod_{{\bf k}\in{{\rm Supp}(f)}}\big(s+\langle\omega,{\bf k}\rangle\big)$ is neither a divisor nor a multiple of the b-function, as the following example shows. Let $f=x^3-x^2y+2y^3$ and $\omega=(2,1)$, then
\[b_{{\rm Ann}(\frac{1}{f}),\omega}(s)=(s+3)(s+4)(s+5)\]
while
\[\prod_{{\bf k}\in{{\rm Supp}(f)}}\big(s+\langle\omega,{\bf k}\rangle\big)=(s+6)(s+5)(s+3)\]
\end{rem}

\vspace{3mm}

\begin{rem}
When the polynomial $f\in\C[x_1,\ldots,x_n]$ with $n>2$ the result is no longer true. Consider $f=x^3-x^2y+2y^3+2z^2x$ and $\omega=(2,-1,0)$, then we can check with Singular that
\[b_{{\rm Ann}(\frac{1}{f}),\omega}(s)=(s-3)(s-2)(s-\frac{4}{3})(s+\frac{1}{3})(s+1)\]
The reason is that, arguing as before, we can prove that there is an operator $\bar Q=(\theta_k+j)(\theta_k+j+1)\cdots(\theta_k+i)\in{\rm in}_{(-\omega,\omega)}({\rm Ann}(\frac{1}{f})\cap\C[\theta]$, for certain $j\leq i\in\N$, and $1\leq k\leq n$, but this, together with the Euler operator $x_1\partial_1+\cdots+x_n\partial_n+d$ do not gives rise to a holonomic ideal contained in ${\rm in}_{(-\omega,\omega)}({\rm Ann}(\frac{1}{f}))\cap\C[\theta]$.
\end{rem}

\subsubsection{The b-function of annihilating ideals of the curves $x^p+y^q$}

Now we consider the easiest case of $f$ quasi-homogeneous, $f=x^p+y^q$ with $p\leq q$.

\vspace{2mm}

In this section we consider the family of curves with equation $f=x^p+y^q$. Without loss of generality we can suppose that $p\leq q$.

In this particular case we are able to describe the annihilating ideal, by using Theorem \ref{parametric}. Eliminating the variable $\partial_t$ is the ideal $I$ we obtain that the parametric ideal is
\[{\rm Ann}(f^s)=D_2[s]\cdot\big(px^{p-1}\partial_y-qy^{q-1}\partial_x, qx\partial_x+py\partial_y-pqs\big)\]
Since for this family of cusps it is well known that the smallest integer root of the Bernstein-Sato polynomial is $\alpha_0=-1$, we deduce by Theorem \ref{thmAnn} that for any $\ell\in\C\setminus\N$
\begin{equation}
{\rm Ann}\big(f^\ell\big)=D_2\cdot\big(px^{p-1}\partial_y-qy^{q-1}\partial_x,qx\partial_x+py\partial_y-\ell pq\big)
\label{AnnI}
\end{equation}

If $\ell\in\N$, the b-function of the annihilating ideal of a polynomial is described in Theorem \ref{uufff}.

\begin{prop}
The weighted b-function of the ideal Ann$(f^\ell)$ where $\ell\in\N$ and $f=x^p+y^q$, is
\[b_{\ell,\omega}(s)=\left\{
\begin{array}{cl}
s-p\ell\omega_1 & \mbox{ if }p\omega_1\leq q\omega_2\\
\\
s-q\ell\omega_2 & \mbox{ otherwise}\\
\end{array}\right.\]
\label{fl}
\end{prop}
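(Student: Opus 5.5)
This is a direct corollary of Theorem \ref{uufff}(ii). For $\ell\in\N$ and any $\omega\in\R^2\setminus\{\bf 0\}$ that theorem gives
\[b_{{\rm Ann}(f^\ell),\omega}(s)=s-\ell\,{\rm ord}_f(\omega).\]
So the whole task is to compute the support function ${\rm ord}_f(\omega)$ of the Newton polytope of $f=x^p+y^q$.

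Since ${\rm Supp}(f)=\{(p,0),(0,q)\}$, the Newton polytope $\Gamma(f)$ is the segment joining these two points. A linear function on a segment attains its minimum at an endpoint, so
\[{\rm ord}_f(\omega)=\min\{\langle\omega,(p,0)\rangle,\langle\omega,(0,q)\rangle\}=\min\{p\omega_1,q\omega_2\}.\]
This equals $p\omega_1$ when $p\omega_1\leq q\omega_2$ and $q\omega_2$ otherwise. Substituting into the formula above gives exactly the two cases of the statement. On the boundary ray $p\omega_1=q\omega_2$, where $\omega$ is dual to the whole segment, the two expressions agree, so there is no ambiguity.

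For $\ell=0$ both cases give $b(s)=s$, consistent with Remark \ref{rem0}. The non-strict inequality in the first case is therefore harmless.

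There is essentially no obstacle. The only points needing care are the following:
\begin{itemize}
\item Theorem \ref{uufff}(ii) is stated for all nonzero $\omega$, including weights with negative entries, so no sign restriction on $\omega$ is needed.
\item The minimum over the polytope reduces to the minimum over its vertices, which lie in ${\rm Supp}(f)$.
\end{itemize}

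One could alternatively verify the result by hand. The required operators are the images in ${\rm in}_{(-\omega,\omega)}({\rm Ann}(f^\ell))$ given by Lemma \ref{lema0}: for $\omega$ in the interior of a cone dual to a vertex, $V_\tau=\R^2$ and we get $\omega_1x\partial_x+\omega_2y\partial_y-\ell\langle\omega,\gamma_\tau\rangle$. This is unnecessary, however, since it is already contained in the proof of the theorem.
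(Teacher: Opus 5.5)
Your proposal is correct and follows the same route as the paper: the paper gives no separate argument and deduces this proposition directly from Theorem~\ref{uufff}(ii). As you say, the only input needed is ${\rm ord}_f(\omega)=\min\{p\omega_1,q\omega_2\}$ for the segment $\Gamma(f)$ joining $(p,0)$ and $(0,q)$, and you compute it correctly.
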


From now on $\ell\in\Z_{<0}$, and we devote the rest of the section to compute the b-function of the annihilating ideal of the rational function $f^\ell$. The description is much more involved than the case $\ell>0$. The annihilating ideal of the rational function $f^\ell$ is given in (\ref{AnnI}).

\begin{Not}
We denote the generators of this ideal by $$H=px^{p-1}\partial_y-qy^{q-1}\partial_x$$ and the Euler operator $$E_\ell=qx\partial_x+py\partial_y-\ell pq.$$  Let us introduce the following notation
\[\begin{array}{l}
Q_{1,\ell}=y^q\partial_y-\ell qy^{q-1}\ \mbox{ and }\ P_{1,\ell}=Q_{1,\ell}+x^p\partial_y\\
\\
Q_{2,\ell}=x^p\partial_x-\ell px^{p-1}\ \mbox{ and }\ P_{2,\ell}=Q_{2,\ell}+y^q\partial_x\\
\end{array}\]
\end{Not}

\begin{lem}
The initial ideal of ${\rm Ann}\big(f^\ell\big)$ with respect to the weight vector $\omega=(\omega_1,\omega_2)\in\R^2\setminus\{0\}$ and $\ell\notin\N$ is
\[{\rm in}_{(-\omega,\omega)}\big({\rm Ann}(f^\ell)\big)=\left\{\begin{array}{cl}
\big(H,E_\ell,P_{1,\ell}\big) & \mbox{ if }p\omega_1=q\omega_2\\
\\
\big(y^{q-1}\partial_x,E_\ell,y^q\partial_y-\ell qy^{q-1}\big) & \mbox{ if }p\omega_1>q\omega_2\\
\\
\big(x^{p-1}\partial_y,E_\ell,x^p\partial_x-\ell px^{p-1}\big) & \mbox{ if }p\omega_1<q\omega_2\\
\end{array}\right.\]
\label{initialcusp}
\end{lem}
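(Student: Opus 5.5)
The plan is to prove the two inclusions separately. The inclusion ``$\supseteq$'' is a direct computation with explicit elements of ${\rm Ann}(f^\ell)$. The inclusion ``$\subseteq$'' I would obtain by showing that a suitable finite set is a Gröbner basis of ${\rm Ann}(f^\ell)$ with respect to the weight $(-\omega,\omega)$. Throughout I use the description (\ref{AnnI}), ${\rm Ann}(f^\ell)=D_2\cdot(H,E_\ell)$, which holds since $\ell\notin\N$.

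\emph{Step 1: the operators $P_{1,\ell}$ and $P_{2,\ell}$ and their initial forms.} First I check that $P_{1,\ell}=f\partial_y-\ell q y^{q-1}$ lies in ${\rm Ann}(f^\ell)$. Indeed,
\[
f\,\partial_y\bullet f^\ell=\ell q y^{q-1}f^\ell ,
\]
so $P_{1,\ell}\bullet f^\ell=0$, and symmetrically $P_{2,\ell}\in{\rm Ann}(f^\ell)$. Both belong to $D_2(H,E_\ell)$; one can also see this by writing them explicitly as combinations of $H$ and $E_\ell$.

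Next I compare $(-\omega,\omega)$-weights. The operator $E_\ell$ is homogeneous of weight $0$. In $H$ the two monomials have weights $-(p-1)\omega_1+\omega_2$ and $\omega_1-(q-1)\omega_2$, whose difference is $p\omega_1-q\omega_2$. In $P_{1,\ell}$ the two parts $x^p\partial_y$ and $Q_{1,\ell}$ have weights $-p\omega_1+\omega_2$ and $-q\omega_2+\omega_2$. Hence:
\begin{itemize}
\item if $p\omega_1=q\omega_2$, all of $H,E_\ell,P_{1,\ell}$ are homogeneous;
\item if $p\omega_1>q\omega_2$, then ${\rm in}(H)=-qy^{q-1}\partial_x$ and ${\rm in}(P_{1,\ell})=Q_{1,\ell}$;
\item if $p\omega_1<q\omega_2$, symmetrically ${\rm in}(H)=px^{p-1}\partial_y$ and ${\rm in}(P_{2,\ell})=Q_{2,\ell}$.
\end{itemize}
This gives ``$\supseteq$'' in all three cases. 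Since the case $p\omega_1<q\omega_2$ is the case $p\omega_1>q\omega_2$ with the roles of $(x,p)$ and $(y,q)$ exchanged, I would treat only the first two cases in detail.

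\emph{Step 2: the reverse inclusion via a Gröbner basis.} I would use the standard fact (SST, Theorem 1.1.6) that if $\mathcal G$ is a Gröbner basis of $I$ for a term order $\prec_{(-\omega,\omega)}$ refining the weight, then ${\rm in}_{(-\omega,\omega)}(I)$ is generated by $\{{\rm in}_{(-\omega,\omega)}(g)\mid g\in\mathcal G\}$. Because $\omega$ may have negative entries, the refined order need not be a well-order. I would therefore work in the homogenized Weyl algebra $D_2^{(h)}$ as in SST and dehomogenize at the end.

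The claim to verify is that $\mathcal G=\{H,E_\ell,P_{1,\ell}\}$ (with $P_{2,\ell}$ in place of $P_{1,\ell}$ in the case $p\omega_1<q\omega_2$) is such a Gröbner basis. This amounts to running Buchberger's criterion on the three S-pairs $(H,E_\ell)$, $(H,P_{1,\ell})$ and $(E_\ell,P_{1,\ell})$. The relevant commutators are all explicit: $E_\ell$ acts on monomials by a scalar shift, because $[E_\ell,x^ay^b\partial_x^c\partial_y^d]$ is a multiple of that monomial. Consequently the pairs involving $E_\ell$ should reduce quickly, leaving essentially only the pair $(H,P_{1,\ell})$ to analyse.

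\emph{Main obstacle.} The hard part is the reduction of the S-pair of $H$ and $P_{1,\ell}$ to zero, including the lower-order terms produced by commuting $\partial_x,\partial_y$ past powers of $x$ and $y$. I would first try to exploit the fact that the initial forms $y^{q-1}\partial_x$ and $y^q\partial_y-\ell q y^{q-1}$ share the factor $y^{q-1}$. This suggests that the S-pair equals $y^{q-1}$ times a combination of $E_\ell$, $H$ and $P_{1,\ell}$, plus terms whose leading monomials are divisible by leading monomials in $\mathcal G$.

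If the direct reduction becomes unwieldy, my fallback is a comparison argument:
\begin{enumerate}
\item let $J$ be the ideal on the right-hand side of the lemma, so that $J\subseteq{\rm in}_{(-\omega,\omega)}({\rm Ann}(f^\ell))$ by Step 1;
\item show that $D_2/J$ and $D_2/{\rm in}_{(-\omega,\omega)}({\rm Ann}(f^\ell))$ have the same holonomic rank, equal to ${\rm rank}\,{\rm Ann}(f^\ell)$ by regularity (Theorem by Kashiwara--Kawai);
\item conclude equality from the absence of lower-dimensional embedded components in $J$, which I would check by hand for these explicit binomial-type ideals.
\end{enumerate}
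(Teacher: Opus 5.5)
Your Step 1 is correct, and your main route for ``$\subseteq$'' is the paper's: the proof is deferred to Lemma~\ref{inCuspQH}, which homogenizes, runs Buchberger in $D_2^{(h)}$ and dehomogenizes with SST Theorem~1.2.4. What is missing is the verification itself. Moreover, your claim as stated is not quite true, because whether $\{H,E_\ell,P_{1,\ell}\}$ is a Gr\"obner basis depends on the tie-break, which you leave open.

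For $p\omega_1>q\omega_2$ the operator $E_\ell$ is $(-\omega,\omega)$-homogeneous. Suppose the tie-break makes $y\partial_y$ its leading monomial. Then the S-polynomial $p\,\partial_yH+q\,y^{q-2}\partial_xE_\ell$ has leading monomial $xy^{q-2}\partial_x^2$. This monomial is divisible by none of the leading monomials $y^{q-1}\partial_x,\ y\partial_y,\ y^q\partial_y$, so the set is not a Gr\"obner basis. You must impose $x\partial_x\succ y\partial_y$, as in Lemma~\ref{termOrder}. With that choice the three S-pairs close via
\begin{align*}
xH+y^{q-1}E_\ell&=p\,P_{1,\ell},\\
y\partial_yH+q\,\partial_xP_{1,\ell}&=(\ell q+q-1)H+x^{p-1}\partial_yE_\ell,\\
y^q\partial_yE_\ell-qx\partial_xP_{1,\ell}&=\ell q\,y^{q-1}E_\ell-x^p\partial_yE_\ell+(p-pq-\ell pq)P_{1,\ell}+p\,y\partial_yP_{1,\ell}.
\end{align*}
These identities persist in $D_2^{(h)}$ after inserting suitable powers of $h$.

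The first identity exhibits $S(H,E_\ell)$ as a multiple of $P_{1,\ell}$. In the other two, every summand on the right has leading monomial strictly below the lcm of the pair, for one of three reasons:
\begin{itemize}
\item $x^{p-1}\partial_yE_\ell$ and $x^p\partial_yE_\ell$ have strictly smaller $(-\omega,\omega)$-weight; this is where $p\omega_1>q\omega_2$ is used.
\item $y\partial_yP_{1,\ell}$ has leading monomial $y^{q+1}\partial_y^2$, of the same weight as $xy^q\partial_x\partial_y$, and it is smaller exactly because $x\partial_x\succ y\partial_y$.
\item The remaining leading monomials divide the lcm.
\end{itemize}
So the delicate pair is $(E_\ell,P_{1,\ell})$, not $(H,P_{1,\ell})$, and no factor $y^{q-1}$ is involved. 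The case $p\omega_1=q\omega_2$ needs no computation at all. There $H$ and $E_\ell$ are $(-\omega,\omega)$-homogeneous, so ${\rm Ann}(f^\ell)$ is a graded ideal equal to its own initial ideal, and $P_{1,\ell}$ lies in it by the first identity.

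Your fallback, on the other hand, cannot work. Equality of holonomic ranks only shows that ${\rm in}_{(-\omega,\omega)}({\rm Ann}(f^\ell))/J$ is supported on a proper subvariety. To conclude you would need $D_2/J$ to have no nonzero submodule supported in lower dimension, and that property is false here. Take $p\omega_1>q\omega_2$ and $q\geq 2$. By Proposition~\ref{GBcusp}, $J$ has a Gr\"obner basis with leading monomials $y^{q-1}\partial_x$, $x\partial_x$, $y^q\partial_y$, none of which divides $\partial_x$, so the class of $\partial_x$ in $D_2/J$ is nonzero. Yet this class is killed by $y^{q-1}$, so it generates a nonzero submodule supported on $\{y=0\}$. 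Since $J$ is the true initial ideal, this feature is intrinsic, and no rank count can replace the S-pair verification.
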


The proof of the Lemma is postponed to the next section, where we will prove a slightly more general result (see Lemma \ref{inCuspQH}).

\begin{defn}
For $\omega\in\R^2\setminus\{0\}$ we define the set $\mathcal G_{\ell,\omega}$ as the set of generators of the initial ideal of Ann$(f^\ell)$. Then, with the above notations,
\[\mathcal G_{\ell,\omega}=\left\{\begin{array}{cl}
\{H,E_\ell,P_{1,\ell}\} & \mbox{ if }p\omega_1=q\omega_2\\
\\
\{y^{q-1}\partial_x,E_\ell,Q_{1,\ell}\} & \mbox{ if }p\omega_1>q\omega_2\\
\\
\{x^{p-1}\partial_y,E_\ell,Q_{2,\ell}\} & \mbox{ if }p\omega_1<q\omega_2\\
\end{array}\right.\]
\end{defn}

\begin{lem}
For any $\omega\in\R^2\setminus\{0\}$ and any $\ell\in\C\setminus\N$ there exists a term order $<^\omega$ in $D_2$ such that
\begin{enumerate}
\item[(i)] If $p\omega_1>q\omega_2$
\[LT(E_\ell)=qx\partial_x\]
and
\[LT(Q_{1,\ell})=y^q\partial_y\]
\item[(ii)] If $p\omega_1<q\omega_2$
\[LT(E_\ell)=py\partial_y\]
and
\[LT(Q_{2,\ell})=x^p\partial_x\]
\item[(iii)] If $p\omega_1=q\omega_2$
\[LT(H)=-qy^{q-1}\partial_x\]
\[LT(E_\ell)=qx\partial_x\]
and
\[LT(P_{1,\ell})=y^q\partial_y\]
\end{enumerate}
\label{termOrder}
\end{lem}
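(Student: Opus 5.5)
The plan is to build each $<^\omega$ as a \emph{weight order refined by a fixed term order}. Choose a vector $W=(u_1,u_2,v_1,v_2)\in\R_{>0}^4$, weight the monomial ${\bf x}^\alpha\partial^\beta$ by $\langle u,\alpha\rangle+\langle v,\beta\rangle$, and break ties with the lexicographic order on exponents $(\alpha,\beta)\in\N^4$. Because all entries of $W$ are positive, this is a multiplicative total order on normally ordered monomials, $1$ is the smallest monomial, and it is a well-order. So it is a term order on $D_2$ in the sense of \cite{SST}.

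The order depends only on which of the three cases $p\omega_1>q\omega_2$, $p\omega_1<q\omega_2$, $p\omega_1=q\omega_2$ holds, and not on $\ell$. The coefficients $q$, $p$, $-q$ of the claimed leading monomials are non-zero. The constant terms $-\ell pq$ and $-\ell qy^{q-1}$, $-\ell px^{p-1}$ never compete for leading term once the weight inequalities below hold, whatever $\ell$ is. It therefore suffices to choose $W$ so that each claimed leading monomial has \emph{strictly} larger $W$-weight than every other monomial of the same operator. Then the lexicographic tie-break plays no role.

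The inequalities to impose are as follows. For $E_\ell$, whose monomials are $x\partial_x$, $y\partial_y$ and $1$, we need $u_1+v_1>u_2+v_2$ in cases (i) and (iii), and $u_2+v_2>u_1+v_1$ in case (ii). In both cases the monomial $1$ is dominated automatically by positivity. For $Q_{1,\ell}$ we need $y^q\partial_y>y^{q-1}$, and for $Q_{2,\ell}$ we need $x^p\partial_x>x^{p-1}$; both hold automatically since $u_2+v_2>0$ and $u_1+v_1>0$. So for (i) take $W=(2,1,1,1)$ and for (ii) take $W=(1,2,1,1)$.

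Case (iii) is the only step needing care, since three operators must be handled at once. For $H$ we need $(q-1)u_2+v_1>(p-1)u_1+v_2$. For $E_\ell$ we need $u_1+v_1>u_2+v_2$. For $P_{1,\ell}=y^q\partial_y-\ell qy^{q-1}+x^p\partial_y$ we need $qu_2+v_2>pu_1+v_2$, i.e.\ $qu_2>pu_1$, and the term $y^{q-1}$ is again automatic. I would take $u_1=1$, $u_2=2$, $v_2=1$, so that $qu_2=2q>p$ because $p\le q$. Then I would take $v_1$ large, say $v_1=p+3$. This gives $u_1+v_1=p+4>3=u_2+v_2$, and the $H$-inequality becomes $2(q-1)+p+3>p$, which is true. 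The only potential obstacle is making these three inequalities simultaneously consistent, and the explicit choice above shows they are. The lemma then follows by reading off the leading terms.
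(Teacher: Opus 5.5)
Your proposal is correct and takes essentially the same approach as the paper: both realize $<^\omega$ as a weight order $<_{(u,v)}$ refined by a tie-breaking term order, and reduce the lemma to the same strict inequalities ($u_1+v_1>u_2+v_2>0$ in case (i), $u_2+v_2>u_1+v_1>0$ in case (ii), and additionally $qu_2>pu_1$ plus the $H$-condition in case (iii)). The only difference is that you exhibit explicit weights and verify the $H$-inequality directly, whereas the paper observes that it follows from the other two and simply asserts that a positive solution exists.
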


{\em Proof.}
To show that the conditions on the leading terms are compatible, we only have to find $(u,v)\in\R^4_{\geq 0}$ representing $<^\omega$ (i.e. such that $<_{(u,v)}$ has the properties of the statement). To have the conditions  of (i) we simply need that
$u_1+v_1>u_2+v_2$ and $qu_2+v_2>(q-1)u_2$, i.e., any $(u,v)\in\R^4_{\geq 0}$ such that
\[u_1+v_1>u_2+v_2>0\]
works.

Analogously, for the case (ii) we look for $(u,v)\in\R^4_{\geq 0}$ such that
\[u_2+v_2>u_1+v_1>0\]
Finally, in (iii) we do not have so much freedom, since there are four conditions,
\[\begin{array}{c}
(q-1)u_2+v_1>(p-1)u_1+v_2\\
u_1+v_1>u_2+v_2\\
qu_2+v_2>pu_1+v_2\\
qu_2+v_2>(q-1)u_2\\
\end{array}\]
or equivalently
\[\begin{array}{c}
u_1+v_1>u_2+v_2>0\\
qu_2>pu_1\\
\end{array}\]
which obviously has a positive solution.
\hfill$\Box$

\vspace{3mm}

\begin{prop}
For any $\omega\in\R^2\setminus\{0\}$ the set $\mathcal G_{\ell,\omega}$ is a Gröbner basis of ${\rm in}_{(-\omega,\omega)}\big({\rm Ann}(f^\ell)\big)$ with respect to the term order $<^\omega$.
\label{GBcusp}
\end{prop}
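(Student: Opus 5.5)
Since $\mathcal G_{\ell,\omega}$ generates ${\rm in}_{(-\omega,\omega)}\big({\rm Ann}(f^\ell)\big)$ by Lemma \ref{initialcusp}, I would use Buchberger's criterion in $D_2$. With respect to $<^\omega$, every S-polynomial of two elements of $\mathcal G_{\ell,\omega}$ must reduce to zero modulo $\mathcal G_{\ell,\omega}$. The leading terms are those fixed in Lemma \ref{termOrder}. The proof splits into the three cases $p\omega_1>q\omega_2$, $p\omega_1<q\omega_2$ and $p\omega_1=q\omega_2$. The first two are symmetric under $x\leftrightarrow y$, $p\leftrightarrow q$, $Q_{1,\ell}\leftrightarrow Q_{2,\ell}$, so I would treat (i) and (iii) in detail.

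\textbf{Case (i).} The leading monomials are $y^{q-1}\partial_x$, $x\partial_x$ and $y^q\partial_y$. There are three pairs, and I would handle each with commutation relations.
\begin{enumerate}
\item[(a)] For $y^{q-1}\partial_x$ and $E_\ell$, use $y^{q-1}E_\ell - qx\,(y^{q-1}\partial_x)$. After normal ordering this is $p\,y^{q}\partial_y-\ell pq\,y^{q-1}$, modulo a term $q\,y^{q-1}\partial_x$ coming from $\partial_x x = x\partial_x+1$, which itself lies in $\mathcal G_{\ell,\omega}$. The remaining expression equals $p\,Q_{1,\ell}$, so it reduces to $0$.
\item[(b)] For $y^{q-1}\partial_x$ and $Q_{1,\ell}$, form $\partial_y\cdot(y^{q-1}\partial_x)\cdot y$-type combinations, or more simply $y\,(y^{q-1}\partial_x)$ against $\partial_x Q_{1,\ell}$. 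The point is that $Q_{1,\ell}$ involves only $y$ and $\partial_y$, which commute with $\partial_x$. So $\partial_x Q_{1,\ell}=Q_{1,\ell}\partial_x$, and the S-polynomial becomes a left $D_2$-combination of $y^{q-1}\partial_x$ and is reduced at once.
\item[(c)] For $E_\ell$ and $Q_{1,\ell}$, use $y^q\partial_y E_\ell - qx\partial_x Q_{1,\ell}$, suitably normalised. Write $E_\ell=q\theta_1+p\theta_2-\ell pq$ and $Q_{1,\ell}=y^{q-1}(\theta_2-\ell q)$. The S-polynomial is then a polynomial in $\theta_1,\theta_2$ times $y^{q-1}$, and it should reduce to $0$ by dividing successively by $E_\ell$ (to remove $\theta_1$) and then by $Q_{1,\ell}$.
\end{enumerate}

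\textbf{Case (iii).} Here $p\omega_1=q\omega_2$ and $\mathcal G_{\ell,\omega}=\{H,E_\ell,P_{1,\ell}\}$, with $P_{1,\ell}=Q_{1,\ell}+x^p\partial_y$. The leading terms are again $y^{q-1}\partial_x$, $x\partial_x$ and $y^q\partial_y$, so the same three pairs occur. Now the tails $px^{p-1}\partial_y$ and $x^p\partial_y$ interact.
\begin{enumerate}
\item[(a)] For $H$ and $E_\ell$, I expect the S-polynomial $y^{q-1}E_\ell+x\,H$ (up to scaling) to equal $p\,P_{1,\ell}$ plus multiples of $H$. This is consistent with the classical identity that $P_{1,\ell}$ lies in the ideal generated by $H$ and $E_\ell$: one checks $x^{p}\partial_y+y^q\partial_y-\ell q y^{q-1}$ directly via $f\,\partial_y - \ell\,\partial_y(f)\in{\rm Ann}(f^\ell)$.
\item[(b)] For $H$ and $P_{1,\ell}$, form $y\,H$ against a suitable multiple of $P_{1,\ell}$ by $\partial_x$. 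This produces terms like $x^{p}\partial_x\partial_y$ and $x^{p-1}\partial_y$. These must be reduced using $E_\ell$, rewriting $x\partial_x$ as $\frac1q(-py\partial_y+\ell pq)$, and then $P_{1,\ell}$ and $H$.
\item[(c)] For $E_\ell$ and $P_{1,\ell}$, the reduction is similar, using $E_\ell$ to replace $x\partial_x$.
\end{enumerate}

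The main obstacle is case (iii), pairs (b) and (c). The reductions there do not close up as trivially as in case (i), because the tails of $H$ and $P_{1,\ell}$ are not in the $\theta$-subalgebra. The verification also requires that at each step the new leading term stays divisible by one of the three leading monomials; this is where the conditions $u_1+v_1>u_2+v_2>0$ and $qu_2>pu_1$ of Lemma \ref{termOrder} enter. As a sanity check, I would first do the computation for $\ell=-1$, $p=2$, $q=3$. Then I would exploit quasi-homogeneity: every element involved is $(-\mathbf v,\mathbf v)$-homogeneous with $\mathbf v=(q,p)$. The reductions therefore stay inside a single graded piece, so the computation is finite and could be organised by $\mathbf v$-degree.
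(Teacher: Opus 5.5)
Your strategy is the same as the paper's. You apply Buchberger's criterion to the three pairs in each of the three cases, with the leading terms fixed by Lemma~\ref{termOrder}, and handle case (ii) by the symmetry $x\leftrightarrow y$, $p\leftrightarrow q$.

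\textbf{The gap.} The proposal stops exactly where the content of the proposition begins. The statement is nothing more than a finite check, and the decisive checks are not carried out, so the proof is incomplete as written.
In case (iii) you give no reduction for $S(H,P_{1,\ell})$ or $S(E_\ell,P_{1,\ell})$. You flag them yourself as the main obstacle and only say they ``must be reduced'' or are ``similar''. In case (i)(c) you only say the S-polynomial ``should'' reduce.
What is needed are explicit standard representations. The paper's are:
\[ y\partial_yH+q\,\partial_xP_{1,\ell}=x^{p-1}\partial_yE_\ell+(\ell q+q-1)H, \]
\[ y^q\partial_yE_\ell-qx\partial_xP_{1,\ell}=\ell q\,y^{q-1}E_\ell-x^p\partial_yE_\ell+p(1-q-\ell q)P_{1,\ell}+p\,y\partial_yP_{1,\ell}, \]
and, in case (i),
\[ y^q\partial_yE_\ell-qx\partial_xQ_{1,\ell}=p(y\partial_y-q+1)Q_{1,\ell}+\ell q^2x\,(y^{q-1}\partial_x). \]
One must then check that the leading monomials of the summands do not exceed the leading monomial of the S-polynomial:
\begin{itemize}
\item $x^p\partial_x\partial_y$ and $y^{q-1}\partial_x$ in the first identity;
\item $xy^{q-1}\partial_x$, $x^{p+1}\partial_x\partial_y$, $y^q\partial_y$ and $y^{q+1}\partial_y^2$ in the second;
\item $y^{q+1}\partial_y^2$ and $xy^{q-1}\partial_x$ in the third.
\end{itemize}
This holds because each of them occurs in that S-polynomial, or divides a monomial occurring in it. This is the only place the term-order conditions enter, through $LM(E_\ell)=x\partial_x$, i.e.\ $u_1+v_1>u_2+v_2$. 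It is not a vague ``each step stays divisible'' condition.
Your $\theta$-formulation of (i)(c) does produce the third identity, because $y^{q-1}\theta_2(\theta_2-\ell q)=(\theta_2-q+1)Q_{1,\ell}$ and $y^{q-1}\theta_1=x\,y^{q-1}\partial_x$. Reducing the $\theta_1$-term by $E_\ell$ instead also works, since $\ell q^2xy^{q-1}\partial_x=\ell q\,y^{q-1}E_\ell-\ell pq\,Q_{1,\ell}$.

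\textbf{Minor slips in case (i).}
\begin{itemize}
\item In (a) no commutator term $q\,y^{q-1}\partial_x$ arises. The multipliers $qx$ and $y^{q-1}$ act on the left and are already in normal order, so $qx\,(y^{q-1}\partial_x)-y^{q-1}E_\ell=-pQ_{1,\ell}$ exactly.
\item In (b) the monomial multiplier needed to reach the lcm $y^q\partial_x\partial_y$ is $y\partial_y$, not $y$. Your observation $\partial_xQ_{1,\ell}=Q_{1,\ell}\partial_x=(y\partial_y-q+1-\ell q)\,y^{q-1}\partial_x$ then does give $S=(q-1+\ell q)\,y^{q-1}\partial_x$, as in the paper. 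It is legitimate to say this reduces to zero: every left multiple of a single element has leading monomial divisible by that element's leading monomial.
\end{itemize}
The quasi-homogeneity remark and the numerical sanity check for $p=2$, $q=3$, $\ell=-1$ are fine, but they do not replace the identities above.
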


{\em Proof.}
Suppose first that $p\omega_1>q\omega_2$. Then, with respect to the term order $<^\omega$, we have
\[\begin{array}{l}
LT(E_\ell)=qx\partial_x\\
\\
LT(Q_{1,\ell})=y^q\partial_y\\
\end{array}\]
We start computing $S$-polynomials.
\[\begin{array}{ll}
S(y^{q-1}\partial_x,E_\ell) & =qx y^{q-1}\partial_x-y^{q-1} E_\ell\\
 & =-py^q\partial_y+\ell pqy^{q-1}=-pQ_{1,\ell}\longrightarrow_{\mathcal G_\omega}\{0\}\\
 \end{array}\]

 \[\begin{array}{ll}
 S(y^{q-1}\partial_x,Q_{1,\ell}) & =y\partial_y (y^{q-1}\partial_x)-\partial_xQ_{1,\ell}\\
  & =(q-1+\ell q)y^{q-1}\partial_x\longrightarrow_{\mathcal G_\omega}\{0\}\\
  \end{array}\]

\[\begin{array}{ll}
S(E_\ell,Q_{1,\ell}) & = y^q\partial_y(E_\ell)-qx\partial_x Q_{1,\ell}\\
 & =(p-\ell pq)y^q\partial_y+py^{q+1}\partial_y^2+\ell q^2xy^{q-1}\partial_x\\
 &=-p(q-1)Q_{1,\ell}+py\partial_yQ_{1,\ell}+\ell q^2xy^{q-1}\partial_x\longrightarrow_{\mathcal G_\omega}\{0\}\\
 \end{array}\]

\vspace{2mm}

Suppose now that $p\omega_1<q\omega_2$, then, with respect to the term order $<^\omega$ we have
\[\begin{array}{l}
LT(E_\ell)=py\partial_y\\
\\
LT(Q_{2,\ell})=x^p\partial_x\\
\end{array}\]
We start computing $S$-polynomials.
\[\begin{array}{ll}
S(x^{p-1}\partial_y,E_\ell) & =pyx^{p-1}\partial_y-x^{p-1}E_\ell\\
 & =-qx^p\partial_x+\ell pqx^{p-1}=-qQ_{2,\ell}\longrightarrow_{\mathcal G_\omega}\{0\}\\
 \end{array}\]

\[\begin{array}{ll}
S(x^{p-1}\partial_y,Q_{2,\ell}) & =x\partial_x(x^{p-1}\partial_y)-\partial_y(Q_{2,\ell})\\
 & =(p-1+\ell p)x^{p-1}\partial_y\longrightarrow_{\mathcal G_\omega}\{0\}\\
 \end{array}\]

\[\begin{array}{ll}
S(E_\ell,Q_{2,\ell}) & =x^p\partial_x(E_\ell)-py\partial_y(Q_{2,\ell})\\
 & =(q-\ell pq)x^p\partial_x+qx^{p+1}\partial_x^2+\ell p^2x^{p-1}y\partial_y\\
 & =qx\partial_x Q_{2,\ell}+\ell p^2x^{p-1}y\partial_y-q(p-1)Q_{2,\ell}\longrightarrow_{\mathcal G_\omega}\{0\}\\
 \end{array}\]

\vspace{2mm}

Finally, if $p\omega_1=q\omega_2$, we have

\[\begin{array}{ll}
S(H,E_\ell) & =xH+y^{q-1}E_\ell\\
 & =px^p\partial_y+py^q\partial_y-\ell pqy^{q-1}=pP_{1,\ell}\longrightarrow_{\mathcal G_\omega}\{0\}\\
 \end{array}\]

\[\begin{array}{ll}
S(H,P_{1,\ell}) & =y\partial_y H+q\partial_x P_{1,\ell}\\
 & =px^{p-1}y\partial_y^2-\big(\ell q^2+q(q-1)\big)y^{q-1}\partial_x+pqx^{p-1}\partial_y+qx^p\partial_x\partial_y\\
 & =(\ell q+q-1)H+x^{p-1}\partial_yE_\ell\longrightarrow_{\mathcal G_\omega}\{0\}\\
\end{array}\]

\[\begin{array}{ll}
S(E_\ell,P_{1,\ell}) & =y^q \partial_yE_\ell-qx\partial_xP_{1,\ell}\\
 & =(-\ell pq+p)y^q\partial_y+py^{q+1}\partial_y^2-pqx^p\partial_y-qx^{p+1}\partial_x\partial_y+\ell q^2 xy^{q-1}\partial_x\\
 & =\ell qy^{q-1}E_\ell-x^p\partial_yE_\ell+(-\ell pq-pq+p)P_{1,\ell}+py\partial_y P_{1,\ell}\longrightarrow_{\mathcal G_\omega}\{0\}\\
\end{array}\]
\hfill$\Box$

\vspace{3mm}

\begin{thm}
The b-function with respect to weight vector $\omega\in\R^2\setminus\{0\}$,  of the ideal ${\rm Ann}\big(f^\ell\big)$ with $\ell\in\Z_{<0}$ and $f=x^p+y^q$, is given by

\[b_{{\rm Ann}(\frac{1}{f^\ell}),\omega}(s)=\left\{\begin{array}{cl}
(s-\ell q\omega_2)\prod_{i=1}^{q-1}\big(s-(\ell p+\frac{p}{q}i)\omega_1+i\omega_2\big) & \mbox{ if }p\omega_1>q\omega_2\\
\\
(s-\ell p\omega_1)\prod_{i=1}^{p-1}\big(s+i\omega_1-(\ell q+\frac{q}{p}i)\omega_2\big) & \mbox{ if }p\omega_1<q\omega_2\\
\\
s-\ell p\omega_1 & \mbox{ if } p\omega_1=q\omega_2\\
\end{array}\right.\]
In particular, notice that there are not multiple roots.
\label{bpq}
\end{thm}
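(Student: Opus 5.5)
Since Proposition \ref{GBcusp} gives a Gröbner basis $\mathcal G_{\ell,\omega}$ of ${\rm in}_{(-\omega,\omega)}\big({\rm Ann}(f^\ell)\big)$ with respect to $<^\omega$, I will apply the method of indeterminate coefficients recalled in Section \ref{bf}. I compute the normal forms $NF(s^k,\mathcal G_{\ell,\omega})$, with $s=\omega_1\theta_1+\omega_2\theta_2$ and $\theta_1=x\partial_x$, $\theta_2=y\partial_y$. Then I find the first linear dependence among them. The three cases $p\omega_1=q\omega_2$, $p\omega_1>q\omega_2$ and $p\omega_1<q\omega_2$ are treated separately. The third is symmetric to the second after exchanging $(x,p,\omega_1)$ with $(y,q,\omega_2)$, so I only discuss the first two.

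\textbf{Case $p\omega_1=q\omega_2$.} Here $LT(E_\ell)=qx\partial_x$, so $\theta_1\equiv -\frac{p}{q}\theta_2+\ell p$ modulo $E_\ell$. Hence $s\equiv(\omega_2-\frac{p}{q}\omega_1)\theta_2+\ell p\omega_1=\ell p\omega_1$, since $\omega_2=\frac{p}{q}\omega_1$. Therefore $s-\ell p\omega_1$ already lies in the initial ideal. Since $b$ is nonzero and the initial ideal is proper, this gives the result. It also agrees with Proposition \ref{6enero}, because $f$ is quasi-homogeneous of weight $(q,p)$.

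\textbf{Case $p\omega_1>q\omega_2$.} I work modulo the torus-fixed ideal ${\rm in}_{(-\omega,\omega)}\big({\rm Ann}(f^\ell)\big)\cap\C[\theta]$, following the argument of Proposition \ref{PropEnero}. The main steps are:
\begin{enumerate}
\item[(a)] Reduce $s$ using $E_\ell$ to $s\equiv c\,\theta_2+\ell p\omega_1$, where $c=\omega_2-\frac{p}{q}\omega_1\neq 0$. Then $b(s)$ equals the minimal polynomial of $\theta_2$ on the quotient, transported by this affine change of variable.
\item[(b)] From $Q_{1,\ell}=y^{q-1}(y\partial_y-\ell q)$, multiply on the left by $\partial_y^{q-1}$ and use (\ref{eqT}) to get $(\theta_2+1)\cdots(\theta_2+q-1)(\theta_2-\ell q)$ in the ideal.
\item[(c)] Map its roots $\theta_2=-i$ for $1\le i\le q-1$, and $\theta_2=\ell q$, through $s=c\theta_2+\ell p\omega_1$. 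This produces exactly the factors $s-(\ell p+\frac pq i)\omega_1+i\omega_2$ and $s-\ell q\omega_2$.
\end{enumerate}
This shows that the stated product is a multiple of $b$.

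For the reverse divisibility, I must show that no proper factor of this polynomial lies in the initial ideal. Equivalently, the classes $1,\theta_2,\dots,\theta_2^{q-1}$ must be linearly independent modulo the initial ideal. I plan to check this through normal forms with respect to $\mathcal G_{\ell,\omega}$. The leading terms are $y^{q-1}\partial_x$, $x\partial_x$ and $y^q\partial_y$. Writing $\theta_2^k=\sum_j S(k,j)y^j\partial_y^j$, the monomials $y^j\partial_y^j$ with $j\le q-1$ are standard. So I will compute $NF(\theta_2^k)$ and show that the $q$ normal forms for $k<q$ are independent. Their leading parts $y^k\partial_y^k$ are distinct standard monomials, which gives a triangularity argument.

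Finally, the roots are pairwise distinct. They are $-i$ for $1\le i\le q-1$ and $\ell q<0$. Could $\ell q=-i$ with $i\le q-1$? That would require $-\ell q<q$, which is impossible since $\ell\le -1$ gives $-\ell q\geq q$. So there are no multiple roots and the polynomial found is exactly $b$.

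The main obstacle is this lower bound: controlling the reduction of $\theta_2^k$ modulo $\mathcal G_{\ell,\omega}$. In particular I must make sure that reducing by $Q_{1,\ell}$, where $y^q\partial_y$ appears inside $y^j\partial_y^j$ only for $j\ge q$, does not interfere with the terms of degree below $q$.
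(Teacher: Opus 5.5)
Your proposal is correct and follows essentially the paper's own proof. You reduce $s$ modulo $E_\ell$ to $(\omega_2-\frac{p}{q}\omega_1)\theta_2+\ell p\omega_1$, use $\partial_y^{q-1}Q_{1,\ell}=(\theta_2+1)\cdots(\theta_2+q-1)(\theta_2-\ell q)$ for the upper bound, and get $\deg b\geq q$ from the Gröbner basis $\mathcal G_{\ell,\omega}$ of Proposition~\ref{GBcusp}, exactly as the paper does. Your standard-monomial argument is a cleaner justification of the paper's brief claim that $s^k$ cannot be reduced before $k=q$: every $y^j\partial_y^j$ with $j\le q-1$ is standard, so each $\theta_2^k$ with $k<q$ is already its own normal form, and these normal forms are independent. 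For the same reason, the reduction obstacle you anticipate at the end does not actually arise.
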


{\em Proof.}
  Let $\omega\in\R^2\setminus\{\bf 0\}$. When $p\omega_1=q\omega_2$, we reduce $s=\omega_1x\partial_x+\omega_2y\partial_y$ with respect to the Euler operator $E_\ell$ and get $NF(s,\mathcal G_{\ell,\omega})=\ell p\omega_1$. The result follows in this case.

 If $p\omega_1>q\omega_2$, let us denote by $I$ the initial ideal of Ann$(f^\ell)$ with respect to $\omega$, that is
 \[I=D_2\langle y^{q-1}\partial_x,E_\ell,y^q\partial_y-\ell qy^{q-1}\rangle\]
 The commutative ideal $I\cap\C[\theta_1,\theta_2]$ is generated by $P_1,E_\ell$ and $P_2$ where
 \[\begin{array}{l}
 P_1=\partial_y^{q-1}y^{q-1}x\partial_x\\

 \ \ \ \ =\theta_1(\theta_2+1)\cdots(\theta_2+q-1)\\

 \\

 P_2=\partial_y^{q-1}(y^q\partial_y-\ell qy^{q-1})\\

 \ \ \ \ =(\theta_2+1)\cdots(\theta_2+q-1)(\theta_2-\ell q)\\
 \end{array}\]

 With respect to a term order $<$ such that $LT(E_\ell)=qx\partial_x$, we have that
 \[NF(s,\mathcal G)=\left(\omega_2-\frac{p}{q}\omega_1\right)y\partial_y+\ell p\omega_1\]
where $\mathcal G$ is a Gröbner basis of $I$ with respect to the term order $<$. Then a power $s^k$ can not be reduced by $\mathcal G$ till $k=q$, when we reduce by $P_2$. Therefore deg$(b_{\ell,\omega}(s))\geq q$.

Denoting $\alpha_2=\omega_2-\frac{p}{q}\omega_1$ and $\alpha_0=\ell p\omega_1$ (notice that $\alpha_2\neq 0$), we deduce
\[\theta_2=\frac{NF(s,\mathcal G)-\alpha_0}{\alpha_2}\]
Since $P_2\in I$ we deduce that
\[NF\left(\big(\frac{s-\alpha_0}{\alpha_2}+1\big)\big(\frac{s-\alpha_0}{\alpha_2}+2\big)\cdots\big(\frac{s-\alpha_0}{\alpha_2}+q-1\big)
\big(\frac{s-\alpha_0}{\alpha_2}-\ell q\big),\mathcal G\right)=0\]
Multiplying by $\alpha_2^q$ we get the monic polynomial $b_{\ell,\omega}(s)$.

 The proof in the case $p\omega_1<q\omega_2$ goes analogously.\hfill$\Box$

\subsubsection{Deforming the curve $x^p+y^q$}
Due to the specialization (\ref{special}) to the Bernstein-Sato polynomial, it is reasonable to expect that the weighted b-function is not a topological invariant. It is easy to check that this is indeed the case.

Before trying to compute the b-function of deformations of the polynomial $x^p+y^q$, notice that our computation for Ann$\big(f^\ell\big)$ where $f$ is the cusp, relies strongly on the existence of an Euler operator in the annihilating ideal. This is also the case for quasi-homogeneous deformations of the curve $x^p+y^q$.

\begin{lem}
Let $p$ and $q$ be two positive integers, $d={\rm gcd}(p,q)$, $p_1=\frac{p}{d}$ and $q_1=\frac{q}{d}$, and let us consider the quasi-homogeneous polynomial
\[f=x^p+y^q+\sum_{j=1}^{d-1}\lambda_jx^{p-jp_1}y^{jq_1}\]
with $\lambda_j\in\C$. Then, for $\ell\in\Z_{<0}$,
\[{\rm Ann}\big(f^\ell\big)=\Big(q_1x\partial_x+p_1y\partial_y-\ell\frac{pq}{d}, H\Big)\]
where $H$ is the following operator
\[H=\Big(qy^{q-1}+\sum_{j=1}^{d-1}\lambda_jjq_1x^{p-jp_1}y^{jq_1-1}\Big)\partial_x-\Big(px^{p-1}+\sum_{j=1}^{d-1}\lambda_j(p-jp_1)x^{p-jp_1-1}y^{jq_1}\Big)\partial_y\]
\label{qh}
\end{lem}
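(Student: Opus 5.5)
The natural route is to mimic the treatment of the cusp $x^p+y^q$: compute the $s$-parametric annihilator via Theorem \ref{parametric}, then specialize using Theorem \ref{thmAnn}.

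\emph{Both operators annihilate $f^\ell$.} Every monomial of $f$ has weighted degree $pq/d=p q_1=q p_1$ with respect to ${\bf v}=(q_1,p_1)$. Indeed $(p-jp_1)q_1+jq_1p_1=pq_1$, so $f$ is quasi-homogeneous of degree $pq/d$. The Euler identity $q_1x f_x+p_1 y f_y=\frac{pq}{d}f$ then gives
\[\Big(q_1x\partial_x+p_1y\partial_y-\ell\frac{pq}{d}\Big)\bullet f^\ell=\ell f^{\ell-1}\Big(q_1xf_x+p_1yf_y-\frac{pq}{d}f\Big)=0,\]
exactly as in the proof of Proposition \ref{6enero}. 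The operator $H$ is $f_y\partial_x-f_x\partial_y$, since its coefficients are precisely $\partial f/\partial y$ and $\partial f/\partial x$. Hence $H\bullet f^\ell=\ell f^{\ell-1}(f_yf_x-f_xf_y)=0$. This proves the inclusion $\supseteq$.

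\emph{The reverse inclusion.} I would first show that ${\rm Ann}(f^s)=D_2[s]\langle E_s,H\rangle$ with $E_s=q_1x\partial_x+p_1y\partial_y-\frac{pq}{d}s$, and then specialize. For a quasi-homogeneous $f$ with an isolated singularity at the origin, the annihilator of $f^s$ is generated by the Euler field minus $\deg(f)\, s$ together with the first-order annihilators, that is, the syzygies of $(f_x,f_y)$. When $f_x,f_y$ form a regular sequence these syzygies are generated by the Koszul syzygy $H$. So the key input is that $f_x,f_y$ have no common factor.

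\emph{Main obstacle.} For arbitrary $\lambda_j$ the isolated-singularity condition can fail. For example, $f$ may acquire a repeated factor $(x^{p_1}-cy^{q_1})^2$ when $d\ge 2$, and then $f_x,f_y$ share a factor. In that case I would argue directly, in the style of the Gröbner computations of Proposition \ref{GBcusp}:
\begin{enumerate}
\item Take $P\in{\rm Ann}(f^\ell)$ and use $E_\ell$ to reduce $P$ modulo a term order in which $LT(E_\ell)=q_1x\partial_x$.
\item Decompose the remainder into weighted-homogeneous pieces; each piece annihilates $f^\ell$ because the Euler operator acts diagonally.
\item Show that each homogeneous annihilator lies in $D_2\langle E_\ell,H\rangle$ by induction on the order in $\partial$, using that $H$ generates the first-order tangent fields annihilating $f^\ell$ modulo Euler.
\end{enumerate}

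\emph{Specialization.} To pass from $s$ to $\ell$, Theorem \ref{thmAnn} needs $\ell\notin\alpha_0+1+\N$. Since $\ell<0$, it suffices that the minimal integer root of $b_f$ is $-1$. For quasi-homogeneous isolated singularities this holds because the roots lie in $(-2,0)$.

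The hardest step is the generation claim ${\rm Ann}(f^s)=\langle E_s,H\rangle$. For it I would invoke the known results on quasi-homogeneous (Koszul free) curves, or alternatively carry out the Briançon–Maisonobe elimination explicitly as was done for the cusp, tracking the $\lambda_j$ terms through the $S$-polynomials.
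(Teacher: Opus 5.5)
Your proof of $\supseteq$ and your treatment of the case where $f_x,f_y$ are coprime are correct, and they follow the paper's route. The paper's proof is the single sentence that the lemma follows from Theorem \ref{parametric}: compute ${\rm Ann}(f^s)$, then specialize $s=\ell$ via Theorem \ref{thmAnn}, as for the cusp. Citing the known description of ${\rm Ann}(f^s)$ for quasi-homogeneous isolated plane singularities (Euler field plus Koszul syzygy), together with $\alpha_0=-1$, is the same argument with the ingredients made explicit.

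\textbf{The gap is your fallback for the non-reduced case (steps 1--3).} It cannot succeed, because the lemma is false there, and step 3 is exactly where it breaks.

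Take $p=q=2$ (so $d=2$, $p_1=q_1=1$) and $\lambda_1=2$. Then $f=(x+y)^2$, $E_{-1}=x\partial_x+y\partial_y+2$ and $H=2(x+y)(\partial_x-\partial_y)$. Clearly $\partial_x-\partial_y\in{\rm Ann}(f^{-1})$.

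In the coordinates $u=x+y$, $v=x-y$ we have $E_{-1}=u\partial_u+v\partial_v+2$, $H=4u\partial_v$ and $\partial_x-\partial_y=2\partial_v$. Consider the class of $u^{-1}v^{-1}$ in the $D_2$-module $\C[u^{\pm1},v^{\pm1}]/\C[u,v^{\pm1}]$. It is nonzero, it is killed by $E_{-1}$ and by $H$, but it is not killed by $\partial_v$. Hence $\partial_x-\partial_y\notin D_2(E_{-1},H)$.

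So the claim in step 3, that $H$ generates the first-order annihilators modulo Euler, is false in the non-reduced case: here $H$ is $2(x+y)$ times a first-order annihilator. No Gröbner reduction can rescue the statement.

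The right repair is an extra hypothesis, not an extra argument. Require $f$ to be reduced, i.e.\ the binary form $X^d+\lambda_1X^{d-1}Y+\cdots+\lambda_{d-1}XY^{d-1}+Y^d$ (with $X=x^{p_1}$, $Y=y^{q_1}$) has no repeated factor. Equivalently, $f_x,f_y$ are coprime, or the singularity is isolated.

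The paper's one-line proof silently needs the same restriction. In the example above, ${\rm Ann}(f^s)$ already contains $\partial_x-\partial_y$, which is not in $D_2[s](E_s,H)$, since otherwise specializing $s=-1$ would put it in $D_2(E_{-1},H)$. With this hypothesis added, your proposal is complete and agrees with the paper's argument.
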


{\em Proof.} It is consequence of Theorem \ref{parametric}.\hfill$\Box$

\vspace{2mm}

\begin{rem}
Notice that if $p$ and $q$ are coprime, there are no quasi-homogeneous deformations of the cusp.
\end{rem}

As always we denote the Euler operator by $E_\ell$. Let us compute the Gröbner deformations.

\begin{Not}

We define, for any integer $\ell>0$ the operators
\[\begin{array}{l}
P_{1,\ell}=f\partial_y-\ell\big(qy^{q-1}+\sum_{j=1}^{d-1}\lambda_jjq_1x^{p-jp_1}y^{jq_1-1}\big)\\
P_{2,\ell}=f\partial_x-\ell\big(px^{p-1}+\sum_{j=1}^{d-1}\lambda_j(p-jp_1)x^{p-jp_1-1}y^{jq_1}\big)\\
\end{array}\]
\end{Not}

\begin{lem}
Let $f=x^p+y^q+\sum_{j=1}^{d-1}\lambda_jx^{p-jp_1}y^{jq_1}$ be a quasi-homogeneous polynomial (with the notations of Lemma \ref{qh}). Then, for any $\ell\in\C\setminus\N$ and any $\omega\in\R^2\setminus\{0\}$ we have
\[{\rm in}_{(-\omega,\omega)}\big({\rm Ann}\big(f^\ell\big)\big)=\left\{\begin{array}{cl}
\big(H,E_\ell,P_{1,\ell}\big) & \mbox{ if }p\omega_1=q\omega_2\\
\\
\big(y^{q-1}\partial_x,E_\ell,y^q\partial_y-\ell qy^{q-1}\big) & \mbox{ if }p\omega_1>q\omega_2\\
\\
\big(x^{p-1}\partial_y,E_\ell,x^p\partial_x-\ell px^{p-1}\big) & \mbox{ if }p\omega_1<q\omega_2\\
\end{array}\right.\]
 Therefore, if we set $f_0=x^p+y^q$ and $\omega\in\R^2\setminus\{0\}$ is such that $p\omega_1\neq q\omega_2$,
\[{\rm in}_{(-\omega,\omega)}\big({\rm Ann}(f^\ell)\big)={\rm in}_{(-\omega,\omega)}\big({\rm Ann}(f_0^\ell)\big)\]
\label{inCuspQH}
\end{lem}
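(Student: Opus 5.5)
We prove the equality of initial ideals by a two-sided inclusion. By Lemma \ref{qh}, ${\rm Ann}(f^\ell)$ is generated by $E_\ell$ and $H$. The plan is to exhibit, for each case, explicit elements of ${\rm Ann}(f^\ell)$ whose $(-\omega,\omega)$-initial forms are the claimed generators. We then show that these generators already form a Gröbner basis of the initial ideal. For the reverse inclusion we use the standard criterion (\cite{SST}): if $G$ is a Gröbner basis of $I$ with respect to a term order refining the weight $(-\omega,\omega)$, then ${\rm in}_{(-\omega,\omega)}(G)$ generates ${\rm in}_{(-\omega,\omega)}(I)$.

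\emph{Producing the elements.} $E_\ell$ is $(-\omega,\omega)$-homogeneous, since it lies in $\C[\theta]$. The operators $P_{1,\ell},P_{2,\ell}$ (with $\ell$ replaced by the negative exponent, i.e. $f\partial_y-\ell\,\partial_y f$ and $f\partial_x-\ell\,\partial_x f$) annihilate $f^\ell$ by direct computation. Alternatively, $P_{1,\ell}$ can be written as a $D_2$-combination of $H$ and $E_\ell$, using quasi-homogeneity $q_1x\partial_xf+p_1y\partial_yf=\frac{pq}{d}f$; this mirrors the relation $S(H,E_\ell)=pP_{1,\ell}$ from Proposition \ref{GBcusp}.

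We then read off initial forms. All monomials of $f$ have the same $(p_1^{-1},q_1^{-1})$-type weight; the dichotomy is given by the sign of $p\omega_1-q\omega_2$.
\begin{enumerate}
\item[$\bullet$] If $p\omega_1>q\omega_2$, then among $x^p$, $x^{p-jp_1}y^{jq_1}$, $y^q$ the monomial $y^q$ has strictly the smallest $\omega$-weight. Hence ${\rm in}_{(-\omega,\omega)}(P_{1,\ell})=y^q\partial_y-\ell qy^{q-1}$. In $H$, the term $qy^{q-1}\partial_x$ dominates, since every other term has strictly smaller $(-\omega,\omega)$-degree by the same linear comparison. This gives $y^{q-1}\partial_x$.
\item[$\bullet$] The case $p\omega_1<q\omega_2$ is symmetric, with $x^p$ dominant.
\item[$\bullet$] If $p\omega_1=q\omega_2$, all operators are homogeneous and coincide with their initial forms.
\end{enumerate}
This gives the inclusion $\supseteq$.

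\emph{Reverse inclusion.} We choose a term order refining $(-\omega,\omega)$, tie-broken as in Lemma \ref{termOrder}, and show that $\{H,E_\ell,P_{1,\ell}\}$ (respectively $\{H,E_\ell,P_{1,\ell},P_{2,\ell}\}$) is a Gröbner basis of ${\rm Ann}(f^\ell)$. The Buchberger verification reduces, on leading terms, to exactly the $S$-polynomial computations of Proposition \ref{GBcusp}. Those computations show that the candidate initial sets are Gröbner bases. To lift this from initial forms to the actual ideal we would instead use the homogenization/Mora argument of \cite{SST}. It suffices that the $S$-polynomials of the original operators reduce to zero modulo the lifts. This should follow because the extra $\lambda_j$-terms are again combinations of $H,E_\ell,P_{i,\ell}$ via the quasi-homogeneous Euler relation.

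Alternatively, we can avoid the lifting. Since ${\rm Ann}(f^\ell)$ is generated by $E_\ell$ and $H$, any element of the initial ideal is an initial form of some $AE_\ell+BH$. Cancellation of top-degree parts of $A E_\ell+BH$ is governed by the syzygies of the initial forms, which the $S$-polynomial computations in Proposition \ref{GBcusp} identify.

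The last assertion then follows immediately. For $p\omega_1\neq q\omega_2$ the generators listed do not involve the $\lambda_j$, so they coincide with those for $f_0$; for $f_0$ this is the case $\lambda_j=0$.

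The main obstacle is this reverse inclusion: ruling out extra initial forms arising from cancellations. I would first try the syzygy-lifting argument (Buchberger criterion in the homogenized Weyl algebra), checking that each $S$-pair of the lifts reduces to zero with standard representations whose degrees do not exceed the $S$-pair degree.
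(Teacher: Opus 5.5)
Your inclusion $\supseteq$ is fine. The initial forms of $H$, $P_{1,\ell}$ and $P_{2,\ell}$ are as you say, and indeed $-q_1xH+f_yE_\ell=\frac{pq}{d}P_{1,\ell}$ holds exactly (here $f_y=\partial f/\partial y$ and $H=f_y\partial_x-f_x\partial_y$). The gap is the reverse inclusion, which is the actual content of the lemma, and neither shortcut you propose establishes it.

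Proposition \ref{GBcusp} only shows that $\mathcal G_{\ell,\omega}$ is a Gröbner basis of the ideal $J$ it generates. It never sees the lower-order parts of $H$ and $P_{1,\ell}$, in particular the $\lambda_j$-terms. So it cannot rule out that some $AH+BE_\ell$ with cancelling top parts has an initial form outside $J$. Saying that the leftover terms ``are again combinations of $H,E_\ell,P_{i,\ell}$'' is vacuous, since every element of ${\rm Ann}(f^\ell)$ is such a combination. What is needed is a standard representation, with every summand of $(-\omega,\omega)$-degree strictly below the degree of the $S$-pair.

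Your syzygy variant has the same hole, plus two more:
\begin{enumerate}
\item[(a)] The syzygies of $(E_\ell,{\rm in}_{(-\omega,\omega)}H)$ are not the ones Proposition \ref{GBcusp} computes. Its first $S$-pair produces the new element $Q_{1,\ell}$, so you must start from $\{H,E_\ell,P_{1,\ell}\}$.
\item[(b)] Since $<_{(-\omega,\omega)}$ is not a well-order, a strictly decreasing sequence of degrees has no termination guarantee; for irrational $\omega_1/\omega_2$ the attainable degrees are even dense in $\R$. This is exactly why the paper homogenizes.
\end{enumerate}

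The paper's proof is the computation you defer. It passes to $\bar H,\bar E_\ell,\bar P_{1,\ell},\bar P_{2,\ell}\in D_2^{(h)}$, runs Buchberger for $<^h_{(-\omega,\omega)}$ separately in the three cases, and concludes with Theorem 1.2.4 of \cite{SST}. You must supply that, or an equivalent lifting check. Two observations make it lighter.
\begin{enumerate}
\item[(1)] If $p\omega_1=q\omega_2$, both $H$ and $E_\ell$ are $(-\omega,\omega)$-homogeneous. Hence ${\rm in}_{(-\omega,\omega)}(I)=I=(H,E_\ell,P_{1,\ell})$ and there is nothing to prove.
\item[(2)] Otherwise, $I$ is graded for $\omega'=(q_1,p_1)$. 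Write $\omega=a\omega'+b(0,1)$ with $b=(q\omega_2-p\omega_1)/q$. Then ${\rm in}_{(-\omega,\omega)}(I)={\rm in}_{(-\nu,\nu)}(I)$ with $\nu=(0,{\rm sign}\,b)$. For this integral weight your descent does terminate.
\end{enumerate}
By Schreyer's theorem, the three reductions in Proposition \ref{GBcusp} then generate the syzygies of $({\rm in}H,E_\ell,{\rm in}P_{1,\ell})$. The first syzygy lifts via the identity above, with lower-degree remainder $-(f_y-qy^{q-1})E_\ell$. The lifts of the other two, coming from $S({\rm in}H,{\rm in}P_{1,\ell})$ and $S(E_\ell,{\rm in}P_{1,\ell})$, must still be computed with the $\lambda_j$-terms present.
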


{\em Proof.}
To describe ${\rm in}_{(-\omega,\omega)}\big({\rm Ann}(f^\ell)\big)$ we need a Gröbner basis of Ann$(f^\ell)$ with respect to $<_{(-\omega,\omega)}$. Since $<_{(-\omega,\omega)}$ is only a monomial order, we will use the homogenized order of $<_{(-\omega,\omega)}$ to compute a Gröbner basis of the ideal generated by $\bar H$ and $\bar E_\ell$ in $D_2^{(h)}$ with respect to the homogenized order $<_{(-\omega,\omega)}^h$, and use Theorem 1.2.4 in \cite{SST}.

\vspace{2mm}

The homogenized operators are (recall that $p\leq q$)
\[\begin{array}{l}
\bar H=\Big(qy^{q-1}+\sum_{j=1}^{d-1}\lambda_jjq_1h^{(q-p)(1-\frac{j}{d})}x^{p-jp_1}y^{jq_1-1}\Big)\partial_x\\
\ \ \ \ \ \ -\Big(ph^{q-p}x^{p-1}+\sum_{j=1}^{d-1}\lambda_j(p-jp_1)
h^{(q-p)(1-\frac{j}{d})}x^{p-jp_1-1}y^{jq_1}\Big)\partial_y\\
\\
\bar E_\ell=q_1x\partial_x+p_1y\partial_y-\ell\frac{pq}{d}h^2\\
\\
\bar P_{1,\ell}=h^{q-p}x^p\partial_y+y^q\partial_y+\sum_{j=1}^{d-1}\lambda_jh^{(q-p)(1-\frac{j}{d})}x^{p-jp_1}y^{jq_1}\partial_y\\
\ \ \ \ \ \ \ \ -\ell\Big(qh^2y^{q-1}+\sum_{j=1}^{d-1}\lambda_jjq_1h^{(q-p)(1-\frac{j}{d})+2}x^{p-jp_1}y^{jq_1-1}\Big)\\
\\
\bar P_{2,\ell}=h^{q-p}x^p\partial_x+y^q\partial_x+\sum_{j=1}^{d-1}\lambda_jh^{(q-p)(1-\frac{j}{d})}x^{p-jp_1}y^{jq_1}\partial_x\\
\ \ \ \ \ \ \ \ -\ell\Big(ph^{q-p+2}x^{p-1}+\sum_{j=1}^{d-1}\lambda_j(p-jp_1)h^{(q-p)(1-\frac{j}{d})+2}x^{p-jp_1-1}y^{jq_1}\Big)\\
\end{array}\]

\vspace{3mm}

Now we distinguish the cases $p\omega_1<q\omega_2$, $p\omega_1>q\omega_2$ and $p\omega_1=q\omega_2$ and proceed with the Buchberger algorithm. We leave the computations of the $S-$polynomials to an Appendix.\hfill$\Box$

\vspace{3mm}

As a consequence we deduce that the weighted b-function is not sensitive to quasi-homogeneous deformations of the cusp.

\begin{prop}
Let $f=x^p+y^q+\sum_{j=1}^{d-1}\lambda_ix^{p-jp_1}y^{jq_1}$ be a quasi-homogeneous polynomial (with the notations of Lemma \ref{qh}), and let $f_0=x^p+y^q$. Then, for any $\omega\in\R^2\setminus\{0\}$ and any $\ell\in\Z$,
\[b_{{\rm Ann}(f^\ell),\omega}(s)=b_{{\rm Ann}(f_0^\ell),\omega}(s)\]
\label{quasidef}
\end{prop}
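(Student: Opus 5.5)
The proof splits according to the sign of $\ell$, and in each case reduces to results already established.

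\emph{Case $\ell\geq 0$.} By Theorem \ref{uufff}(ii) (and Remark \ref{rem0} for $\ell=0$), $b_{{\rm Ann}(f^\ell),\omega}(s)=s-\ell\,{\rm ord}_f(\omega)$. The same holds for $f_0$. So it suffices to check that ${\rm ord}_f(\omega)={\rm ord}_{f_0}(\omega)$ for every $\omega$. The extra exponents of $f$ are $(p-jp_1,jq_1)$ with $1\leq j\leq d-1$. These points are $(1-\frac{j}{d})(p,0)+\frac{j}{d}(0,q)$, so they lie on the segment joining $(p,0)$ and $(0,q)$. Hence $\Gamma(f)=\Gamma(f_0)$ is that segment (a nonzero $\lambda_j$ adds no new vertex), and the support functions coincide.

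\emph{Case $\ell<0$, generic weights.} If $p\omega_1\neq q\omega_2$, the last assertion of Lemma \ref{inCuspQH} gives ${\rm in}_{(-\omega,\omega)}({\rm Ann}(f^\ell))={\rm in}_{(-\omega,\omega)}({\rm Ann}(f_0^\ell))$. The b-function depends only on the initial ideal and on $s$, so the two b-functions coincide. This uses Lemma \ref{initialcusp}, whose proof was deferred to Lemma \ref{inCuspQH}, so the argument is consistent.

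\emph{Case $\ell<0$, $p\omega_1=q\omega_2$.} This is the main point, since the initial ideals $(H,E_\ell,P_{1,\ell})$ now genuinely involve the $\lambda_j$. Here $\omega$ is a nonzero multiple of $(q_1,p_1)$, say $\omega=\mu(q_1,p_1)$ with $\mu\neq 0$. Then $s=\mu(q_1x\partial_x+p_1y\partial_y)$. The Euler operator $E_\ell=q_1x\partial_x+p_1y\partial_y-\ell\frac{pq}{d}$ lies in ${\rm Ann}(f^\ell)$ by Lemma \ref{qh}, and it is $(-\omega,\omega)$-homogeneous, so it lies in the initial ideal. Therefore $s-\mu\ell\frac{pq}{d}$ lies in ${\rm in}_{(-\omega,\omega)}({\rm Ann}(f^\ell))\cap\C[s]$. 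The initial ideal is proper, because $E_\ell\bullet f^\ell=0$ forces its image to annihilate the nonzero function $f^\ell$. Hence the b-function is exactly $s-\mu\ell\frac{pq}{d}=s-\ell p\omega_1$, using $\mu q_1=\omega_1$.

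For $f_0$, Theorem \ref{bpq} gives $s-\ell p\omega_1$ in this case. Alternatively, the same Euler argument applies directly, or Proposition \ref{6enero} does. The two b-functions therefore agree.

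The only delicate point is this last case. Its Euler-operator argument bypasses the Gröbner bases of the Appendix entirely, so the appeal to Lemma \ref{inCuspQH} is needed only off the line $p\omega_1=q\omega_2$.
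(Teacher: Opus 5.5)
Your proposal is correct and is essentially the paper's proof: Theorem \ref{uufff} plus ${\rm ord}_f={\rm ord}_{f_0}$ for $\ell\geq 0$, equality of initial ideals from Lemma \ref{inCuspQH} when $p\omega_1\neq q\omega_2$, and the Euler operator alone when $p\omega_1=q\omega_2$. Only your properness sentence fails as written, since one annihilating element in the initial ideal does not exclude $1$; the right reason is that for $\omega\in\R(q_1,p_1)$ both generators $E_\ell,H$ are $(-\omega,\omega)$-homogeneous, so the initial ideal is ${\rm Ann}(f^\ell)$ itself (alternatively, apply Lemma \ref{uy} with $f_{\tau_\omega}=f$), and the paper in any case assumes properness throughout.
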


{\em Proof.}
First we deal with the case $\ell\notin\N$.
Once the two initial ideals for Ann$\big(f^\ell\big)$ and Ann$\big(f_0^\ell\big)$ coincide when $p\omega_1\neq q\omega_2$, the set $\mathcal G_\omega$ is the same and we can use Lemma \ref{termOrder} and Proposition \ref{GBcusp}. Then Theorem \ref{bpq} holds for $f$. If $p\omega_1=q\omega_2$ the proof of Theorem \ref{bpq} uses only the Euler operator $E_\ell$ and hence it holds in this case too.

\vspace{2mm}

When $\ell\in\N$, it follows by Theorem \ref{uufff}, since it is easy to check that
\[{\rm ord}_{f_0}(\omega)={\rm ord}_f(\omega)\]
for any $\omega\in\R^2$.
\hfill$\Box$

\vspace{10mm}

However this is not the case for another deformations of $f$, leaving fixed the Newton polygon. The main problem is that now the annihilating ideal is difficult to compute, and we do not have a system of generators of it. Moreover we can easily see there is no Euler operator in the annihilating ideal.

We consider next deformations of the cusp, with the same Newton polygon, and we will see that, the b-functions coincide for some $\omega$, even though the initial ideals are different.

\begin{rem}($\mu$-constant deformations of the cusp)

Inspired by \cite{CN} let us consider the $\mu$-constant deformations of $x^p+y^q$ with gcd$(p,q)=1$. They are of the form 
\[x^p+y^q+\sum\lambda_jx^{a_j}y^{b_j}\]
where $j$ runs over the set
\[\{j\ |\ j+1=a_jq+b_jp-pq,\ 0\leq a_j\leq p-2, \ 0\leq b_j\leq q-2,\ \frac{a_j}{p}+\frac{b_j}{q}>1\}\]
Let $f_0=x^p+y^q$ and $f$ a $\mu$-constant deformation of $f_0$, then in \cite{CN} it is proved that $b_f(s)\neq b_{f_0}(s)$, but they share some roots, and this set of common roots is explicitly described.

In our case the b-function is not just a polynomial in $s$, it is a piecewise polynomial and the roots depend on $\omega$.
Unfortunately, the ideal Ann$\big(f^\ell\big)$ is not as simple as in the case of the cusp, and for $\mu$-constant deformations we do not have explicitly a set of generators of Ann$\big(f^\ell\big)$.

Let us look at an example.

\begin{Exam}
The simplest example is $f_0=x^4+y^5$, which admits only one $\mu$-constant deformation
\[f=x^4+y^5+x^2y^3\]
We compute some b-functions of Ann$\big(\frac{1}{f}\big)$ with Singular \cite{singular} and compare them. We denote the b-functions by $b_{0,\omega}(s)$ the one associated with $f_0$ and $b_{1,\omega}(s)$ the one associated with $f$. We can check that there are $\omega\in\R^2\setminus\{0\}$ for which the two b-functions coincide. Indeed, if $\omega=(5,4)$
\[b_{0,\omega}(s)=b_{1,\omega}(s)=s+20.\]
However, this is not the case whenever $p\omega_1=q\omega_2$, since, for $\omega=(-5,-4)$ we have the {\em opposite} behaviour
\[b_{0,\omega}(s)=s-20\]
\[b_{1,\omega}(s)=\big(s-9\big)\big(s-14\big)\big(s-\frac{62}{3}\big)\big(s-21\big)\big(s-\frac{64}{3}\big)\big(s-22\big)\]
the b-functions have no common roots. In fact, it seems that in the whole region  $$\{\lambda(1,1)+\mu(3,2)\ |\ \lambda,\mu\geq 0\}\subseteq\R^2$$ containing $(5,4)\R_{>0}$ we have equality of the b-functions, because the initial ideals are the same. While in the (open) region
\[\{\lambda(-1,-1)+\mu(-11,-8)\ |\ \lambda,\mu>0\}\]
the b-functions have no roots in common.

Finally, for any other $\omega$ the number of common roots is non-zero, but the b-functions are different.
\end{Exam}
\label{remMU}
\end{rem}

\end{document}